\definecolor{green_mermoz}{rgb}{0,0.5,0}
\crefname{thm}{Theorem}{Theorems}
\crefname{problem}{Problem}{Problems}
\crefname{prop}{Proposition}{Propositions}
\crefname{lem}{Lemma}{Lemmas}
\definecolor{k4}{rgb}{0.8,0.8,0.8}
\definecolor{k3}{rgb}{0.6,0.6,0.6}
\definecolor{k2}{rgb}{0.4,0.4,0.4}
\definecolor{k1}{rgb}{0.2,0.2,0.2}
\newcommand{\loc}{\mathrm{loc}}
\newcommand{\comega}{\Omega^{\mathsf{c}}}
\newcommand{\N}{\mathbb{N}}
\newcommand{\R}{\mathbb{R}}
\newcommand{\eps}{\varepsilon}
\newcommand{\f}{\dfrac}
\newcommand{\p}{\partial}
\newcommand{\abs}[1]{\left\lvert#1\right\rvert}
\newcommand{\norm}[1]{\left\lVert#1\right\rVert}
\newcommand{\nalph}[1]{\left\lVert#1\right\rVert_\alpha}
\newcommand{\Ea}{\mathcal{E}^\alpha}
\newcommand{\Eaz}{\mathcal{E}^\alpha_0}
\newcommand{\QV}{G_\e^V}
\newcommand{\dQVA}{dG_\e^{V_1}}
\newcommand{\dQVB}{dG_\e^{V_2}}
\newcommand{\dQVe}{dG_\e^{V_\e}}
\newcommand{\bz}{\overline{z}}
\newcommand{\Cgamma}{L_K(\gamma)}
\renewcommand{\epsilon}{\varepsilon}
\newcommand{\RK}{R_K}
\newcommand{\GK}{G_K}
\newcommand{\Gammam}{\Gamma^m_{\e}}
\newcommand{\GammaI}{\Gamma^I_{\e}}
\DeclareMathOperator{\Id}{Id}
\DeclareMathOperator{\I}{I}
\DeclareMathOperator{\lep}{\lambda_\varepsilon}
\DeclareMathOperator*{\argmin}{argmin}
\renewcommand{\H}{\mathcal{H}_\e}
\newcommand{\D}{\mathcal{D}_\e}
\renewcommand{\I}{\mathcal{I}}
\newcommand{\Je}{\mathcal{J}_\e}
\renewcommand{\S}{\mathcal{S}}
\renewcommand{\leq}{\leqslant}
\renewcommand{\geq}{\geqslant}
\renewcommand{\eps}{\varepsilon}
\renewcommand{\tilde}{\widetilde}
\newcommand{\m}{m}
\newcommand{\e}{\varepsilon}
\newcommand{\dint}{\displaystyle \int}
\newcommand{\vphia}{\varphi_\alpha}
\newcommand{\iep}[1]{I_\varepsilon (U_\varepsilon)(#1)}
\newtheorem{thm}{Theorem} 
\newtheorem{lem}[thm]{Lemma} 
\newtheorem{prop}[thm]{Proposition} 
\newtheorem{remark}[thm]{Remark} 
\newtheorem{cor}[thm]{Corollary}
\newtheorem{definition}[thm]{Definition}
\numberwithin{equation}{section}
\numberwithin{thm}{section}
\def\ds{\displaystyle}
\title{Asymptotic analysis of a quantitative genetics model with nonlinear integral operator}
\author{Vincent Calvez}
\address{ICJ, UMR 5208 CNRS \& Universit\'{e} Claude Bernard Lyon 1, Lyon, France}
\email{vincent.calvez@math.cnrs.fr} 
\author{Jimmy Garnier} 
\address{LAMA, UMR 5127 CNRS \& Univ. Savoie Mont-Blanc, Chambéry, France}
\email{jimmy.garnier@univ-smb.fr}
\author{Florian Patout} 
\address{UMPA, UMR 5669 CNRS \& Ecole Normale Sup\'erieure de Lyon, Lyon, France} 
\email{florian.patout@ens-lyon.fr}
\date{\today}
\subjclass[2010]{35P20;35P30;35Q92;35B40;47G20}
\begin{document}

\maketitle



 
\begin{abstract}
We study the asymptotic behavior of stationary solutions to a quantitative genetics model with trait-dependent mortality and a nonlinear integral reproduction operator.
Our asymptotic analysis encompasses the case when the deviation between the offspring and the mean parental trait is typically small. Under suitable regularity and growth conditions on the mortality rate, we prove existence and local uniqueness of a stationary profile that get concentrated around a critical point of the mortality rate, with a nearly Gaussian distribution having small variance. Our approach is based on perturbative analysis techniques that require to describe accurately  the correction to the Gaussian leading order profile. Our contribution extends previous results obtained with linear reproduction operator, but using an alternative methodology. 
\end{abstract}

\section{Introduction}
We investigate solutions $(\lambda_\e,F_\e) \in \R\times L^1(\R^d)$ of the following stationary problem:
\begin{align}\label[problem]{eq:main_fep}\tag{$PF_\e$}
 \ \lambda_\e F_\e(z) + m(z) F_\e(z) =   \mathcal{B}_\e (F_\e)(z)\,, \quad z\in \R^d \, ,
\end{align}
where $\mathcal{B}_\e $ is the  following non linear, homogeneous integral operator associated to the infinitesimal model \cite{Fisher1918,weberinfinitesimal}:
\begin{equation}
\mathcal{B}_\e (f)(z) := \dfrac{1}{\eps^d \pi^\frac d2}   \iint_{\mathbb{R}^{2d}}  \exp\left[ -  \dfrac{1}{\eps^2} \left( z - \dfrac{ z_1 +   z_2}2 \right)^2 \right ]  f( z_1)\dfrac{  f(  z_2)}{ \int_{\R^d}   f(  z_2')\, d  z_2'}\, d  z_1 d  z_2.\label{eq:Beps}
\end{equation}

In the context of quantitative genetics, the variable $z$ denotes a multi-dimensional phenotypic trait, $F_\e(z)$ is the phenotypic distribution of the population and $m(z)$ is the (trait-dependent) mortality rate which results in the selection of the fittest individuals.

The mixing operator $\mathcal{B}_\e$ acts as a simple model for the inheritance of quantitative traits in a population with a sexual mode of reproduction. As formulated in \eqref{eq:Beps}, it is assumed that offspring traits are distributed normally around the mean of the parental traits $(z_1 + z_2)/2$, with a variance which remains constant accross generations, here $\eps^2/2$. 

We are interested in the asymptotic behaviour of the trait distribution $F_\eps$ as $\eps$ vanishes.


This asymptotic regime was investigated thoroughly for various linear operators $\mathcal{B}_\e$ associated with asexual reproduction such as for instance the diffusion operator $F_\e(z) + \e^2 \Delta F_\e(z)$, or the convolution operator $\frac1\e K(\frac z\e)*F_\e(z)$ where $K$ is a probability kernel with unit variance, see \cite{diekmann2005,perthame-book,barlesperthamecontemporary,
mirrahimiconcentration,
lorz2011dirac} for the earliest investigations, see further \cite{meleard_singular_2015,
mirrahimi_singular_2018,bouin_thin_2018} for the case of a fractional diffusion operator (or similarly a fat-tailed kernel $K$), and see further \cite{mirrahimi_adaptation_2013,
mirrahimi_asymptotic_2015,
bouin_hamiltonjacobi_2015,
lam_integro-pde_2017,
gandon_hamiltonjacobi_2017,
mirrahimi_hamiltonjacobi_2017, mirrahimi_evolution_2018,calvez_non-local_2018} for the interplay between evolutionary dynamics and a spatial structure. In the linear case, the asymptotic analysis usually leads to a Hamilton-Jacobi equation for the Hopf-Cole transform $U_\e = -\e \log F_\e$. This yields an original problem with non-negativity constraint that requires a careful well-posedness analysis \cite{mirrahimi_class_2015,
calvez_uniqueness_2018}. 

Much less is known about the non linear equation \eqref{eq:main_fep}, although this model  is widely used in theoretical evolutionary biology to describe sexual reproduction, see {\em e.g.} \cite{slatkin1970selection,
roughgarden1972evolution,
slatkin_niche_1976,
bulmerbook,menormal,
tufto_quantitative_2000,	
barfield_evolution_2011, huisman2012comparison,
cotto-ronce,
weberinfinitesimal,turelli_comm}.  

From a mathematical viewpoint, the model~\eqref{eq:Beps} received recent attention in the field of probability theory \cite{weberinfinitesimal} and integro-differential equations~\cite{raoulmirrahimiinfinitesimal,Raoul}. 
In the latter couple of articles, a scaling different from \eqref{eq:Beps} is studied: the variance is of order one, but there is a large reproduction rate that enforces the relaxation of the phenotypic distribution towards a Gaussian local equilibrium. Macroscopic equations are rigorously derived in \cite{Raoul}, in the case of an additional spatial structure, in the spirit of hydrodynamic limits for kinetic equations. 

In a different context, a similar collisional operator as  $\mathcal{B}_\e$ \eqref{eq:Beps} was introduced in the modelling of self-propelled particles with alignment interactions, see for instance \cite{bertin_boltzmann_2006,degond_local_2014}. When two particles interact they tend to align with the mean velocity, with some possible noise. However, there are some discrepancies with our case study, since the operator is not conservative in our case, by definition of a reproduction operator. Moreover, it is normalized by the total mass of the phenotypic distribution: $\int  f(  z_2')\, d  z_2'$. The rationale behind this choice is that during the mating process, the first parent chooses the trait of its partner depending on its frequency in the population. This is the neutral case without any assumption about assortative mating. Moreover, this dependency upon the frequency rather than the density discards any small population effects that could arise from a quadratic collisional operator. Such homogeneity of degree 
one is a key ingredient in our analysis.

The problem \eqref{eq:main_fep} is equivalent to the existence of special solutions  of the form $\exp (\lambda_\e t) F_\e(z)$, for  the following non-linear but one-homogeneous equation which will be the subject of future work:
\begin{equation}\label{eq:time marching}
\p_t f(t,z) + m(z)f(t,z) = \mathcal{B}_\e(f)(t,z)\,, \quad t>0\,,\;  z\in \R^d .
\end{equation}
Alternatively speaking, the problem \eqref{eq:main_fep} expresses the balance between selection via trait-dependent mortality $m(z)$, and the generation of diversity through reproduction $\mathcal{B}_\epsilon$. The scalar $\lambda_\e$ is analogous to the principal eigenvalue of the operator $\mathcal{B}_\epsilon - m$. However, it might not be unique, as in the Krein-Rutman theory, see Corollary \ref{cor:uniqueness}. It measures the global fitness of the population: the population  grows exponentially fast $\lambda_\e>0$ when the reproduction term $\mathcal{B}_\epsilon$ dominates, while it declines exponentially fast $\lambda_\e<0$ when the mortality $m$ out-competes the reproduction.  

This preliminary analysis on the stationary profile paves the way for a systematic analysis of various quantitative genetics models, including time marching problems and the combination of multiple effects (spatial structure, aging of the population etc).

Our work is inspired by similar asymptotics in the case of linear operator $\mathcal{B}_\epsilon$, see the seminal work by \cite{diekmann2005} and references cited above. Accordingly, our goal is to analyze problem \eqref{eq:main_fep} in the limit of vanishing variance $\e^2\to 0$. Since there is few diversity generated in this asymptotic regime, we expect that the  variance of the distribution solution $F_\epsilon$ vanishes as well. Actually, there is strong evidence that the leading order profile of $F_\epsilon$ is a Gaussian distribution with variance $\e^2$. As a matter of fact, any Gaussian distribution with variance $\e^2$ is invariant by the infinitesimal operator $\mathcal{B}_\e$ in the absence of selection ($m \equiv 0$, $\lambda_\e = 1$) \cite{menormal,raoulmirrahimiinfinitesimal}. This motivates the following decomposition of the solution: \begin{equation}\label{eq:def_Ueps}
   F_\e(z) = \dfrac{1}{(2\pi)^\frac d2 \e^d} \exp\left( -\dfrac{(z -z_0 )^2}{2\epsilon^2}  - U_\epsilon(z) \right). 
\end{equation}
The latter \eqref{eq:def_Ueps}  is similar to the Hopf-Cole transform used in the asymptotic analysis of adaptative evolutionary dynamics in asexual populations.  In our case $U_\e$ is a corrector term that measures the deviation from the leading Gaussian distribution of variance $\e^2$. 
Our analysis reveals that selection determines the center of the distribution $z_0$, as expected, and also reshapes the distribution $F_\e$ via the corrector $U_\e$. 

The operator $\mathcal{B}_\e$ is invariant by translation. Up to a translation of $m$, we may assume that the leading order Gaussian distribution is centered at the origin, {\em i.e.} $z_0 = 0$. Next, up to a change of $\lambda_\e  \leftarrow \lambda_\e + m(0)$, we may assume that $m(0) = 0$. Note that we may also assume $U_\e(0) = 0$ without loss of generality, as the original problem is homogeneous. 

Plugging the transformation of  \eqref{eq:def_Ueps} into \eqref{eq:main_fep} yields the following equivalent  problem for $U_\e$ :
\begin{align}
\label[problem]{eq:PUeps}\tag{$PU_\e$} \lambda_\e + m(z) & = I_\e(U_\e)(z) \exp \left( U_\e(z)-2U_\e\left(\frac{z}{2} \right) + U_\e(0) \right),  \quad z \in \R^d.
\end{align}
The residual term from the integral contribution is the following non-local term $I_\e(U_\epsilon)$, see Section \ref{sec:reformulation1} for the details of the derivation:
\begin{multline}\label{eq:def_Ieps_int}
I_\e(U_\e)(z) \\ =  \dfrac{\ds  \iint_{\R^{2d}} \exp \left[-\dfrac{1}{2}y_1 \cdot y_2 - \dfrac{3}{4}\left (\abs{y_1}^2+\abs{y_2}^2\right ) +2 U_\e\left( \frac{z}{2} \right) - U_\e\left( \frac{z}{2}+\epsilon y_1 \right) - U_\e\left(\frac{z}{2} +\epsilon y_2\right)\right] d y_1 d y_2  }
                                 {\ds   \pi^{d/2} \int_{\R^d} \exp\left[ - \frac12 |y|^2 +  U_\e(0)- U_\e(\epsilon y) \right] d y}. 
\end{multline}
This decomposition appears to be relevant because a formal computation shows that $I_\e(U_\e)\to 1$ as $\e\to 0$. Establishing uniform convergence is actually a cornerstone of our analysis. Thus for small $\epsilon,$ the problem \eqref{eq:PUeps} is presumably  close to the following  corrector equation, obtained formally at $\epsilon =0$: 
\begin{equation}\label[problem]{eq:PU0}\tag{$PU_0$}
\lambda_0 +m(z) =  \exp \left( U_0(z)-2U_0\left(\frac{z}{2}\right) + U_0(0) \right), \quad z \in \R^d.
\end{equation}
Interestingly, this finite difference equation admits explicit solutions by means of an infinite series:
\begin{equation*}
U_0(z) = \gamma_0\cdot z + \sum_{k\geq 0} 2^k \log \left( \lambda_0 + m(2^{-k}z )\right)\, ,
\end{equation*} 
 However, two difficulty remains: identify {\em(i)} the linear part $\gamma_0\in\R^d$ and {\em(ii)} the unknown $\lambda_0\in\R$.
On the one hand, the linear part $\gamma_0$ cannot be recovered from \eqref{eq:PU0} because linear contributions cancel in the right-hand-side of \eqref{eq:PU0}. Thus, identifying the coefficient $\gamma_0$ will be a milestone of our analysis. On the other hand, two important conditions must be fulfilled to guarantee that the series above converges, namely: 
\[ \lambda_0 + m(0) = 1\,,\quad  \text{and} \quad \p_z m(0) = 0 .\] 
The latter is a constraint on the possible translations that can be operated: the origin must be located at a critical point of $m$. The former prescribes the value of $\lambda_0$ accordingly. These two conditions are necessary conditions for the resolvability of problem \eqref{eq:PU0}. Indeed, evaluating \eqref{eq:PU0} at $z=0$, we get the first identity. Next, differentiating and evaluating again at $z = 0$, we get the second identity.

In the sequel we make this formal discussion rigourous, following a perturbative approach for $\e$ small enough. Before stating our main result, we need to prescribe the appropriate functional space for the corrector $U_\e$.

\begin{definition}[Functional space for $U_\e$]\label{def:Ea}$ $\\
For any positive parameter 	$\alpha \leq 2/5$, we define the functional space \begin{equation*}
\Ea =\left\{ 
u \in \mathcal{C}^3(\R^d)\::\: u(0)=0,  \text{ and } \left|
\begin{array}{rr} \ds  \abs{ D u(z)} &  \\
\ds  \left( 1+|z| \right) ^\alpha \abs{ D^2 u(z) }& \\
\ds  \left( 1+|z| \right) ^\alpha \abs{ D^3 u(z) }& \\
\end{array} \right. \in L^\infty(\R^d)
\right\},
\end{equation*}
equipped with the norm 
\begin{equation}\label{eq:normalpha}
\nalph{u} = \max \left(   \sup_{z \in \R^d} \abs{D u(z) } , \  \sup_{z \in \R^d} \   (1+\abs{z})^\alpha \left\{\abs{   D^2 u(z) } , \abs{ D^3 u(z) } \right\} \right). 
\end{equation}
\end{definition}
For any bounded set $K$ of $\Ea$, we use the notation $\nalph{K} = \sup_{u\in K} \|u\|_\alpha$. Occasionally we use the notation $\vphia$ for the weight function $\vphia(z) = (1+\abs{z})^\alpha$. Although $2/5$ is not the critical threshold, it happens that the exponent $\alpha$ cannot be taken too large  in our approach. We set implicitly $\alpha = 2/5$ in the following results, however we leave it as a parameter to emphasize its role in the analysis, and to pinpoint the apparition of the threshold. Note that $\alpha>0$ is required in our approach, as one constant collapses in the limit $\alpha\to 0$ (see estimate \eqref{eq:alpha collapses} below).

Then, we detail the assumptions on the selection function $m$.
\begin{definition}[Assumptions on $m$]\label{def m}$ $\\
The function $m$ is a $\mathcal{C}^3(\R^d)$ function, bounded below, that admits a \emph{local} 
 non-degenerate minimum at $0$ such that $m(0)=0$, and there exists $\mu_0>0$ such that $D^2 m(0) \geq \mu_0 \Id$ in the sense of symmetric matrices.
Furthermore we suppose that $(\forall z)\; 1+m(z) >  0$ and
\begin{align}\label{eq:log m}
(1+\abs{z})^\alpha \dfrac{D^k m(z)}{1+m(z)} \in L^\infty(\R^d)\, , \quad \text{for\,  $k = 1,2,3$}\, .
\end{align}
\end{definition}
\begin{remark}\label{rem:SM} Our result is insensitive to the sign of the local extremum. Indeed, one can replace  the hypothesis that $m$ admits a "local non degenerate  \emph{minimum}" at $0$ with a "local non degenerate   \emph{maximum}" at 0, and that there exists $\mu_0 <0$ such that $ D^2 m(0) \leq \mu_0 \Id$. However, we leave our main assumption as in Definition \ref{def m} as it is the most natural one from the point of view of stability analysis for the time-marching problem \eqref{eq:time marching}.
\end{remark}
The   condition \eqref{eq:log m} is clearly verified if $m$ is a polynomial function. It would be tempting to write, in short, that $\log(1+m)\in \Ea$, which is indeed a consequence of \eqref{eq:log m}. However, the latter condition also contains the decay of the first order derivative $D\log(1+m)$ with rate $|z|^{-\alpha}$, which is not  contained in the definition of $\Ea$ \eqref{eq:normalpha} for good reasons.

We also introduce the subset $\Eaz$ :  
\begin{equation}\label{eq:E0}
\Eaz = \left\{ v \in \Ea\::\: D v(0)=0,  \ D^2 v(0) \geq   D^2 m(0) \geq \mu_0 \Id \right\},
\end{equation}
Then, our assumption on $m$ in fact  guarantees	 that
\begin{align}\label{cond logm inEaz}
\log(1+m) \in \Eaz.
\end{align}

The main result of this article is the following theorem :
\begin{thm}[Existence and convergence]\label{convergence PUeps}$ $ 
\begin{itemize}
\item[(i)] There exist $K_0$ a  ball of $\Ea$,  and $\e_0 $ a positive constant, such that for any $\e \leq \e_0$,  the  problem \eqref{eq:PUeps} admits a unique solution $(\lambda_\e, U_\e) \in \R  \times K_0$.
\item[(ii)] The family $(\lambda_\epsilon,U_ \epsilon)_{\e}$ converges to $(\lambda_0,U_0)$ as $\epsilon\to0$,  with 
\begin{align}
\label{eq:def_lambda0}   \lambda_0  &  =  1, \\
\label{eq:def_U0}  \ds  U_0(z)  & = \gamma_0 \cdot z + V_0(z) ,
\end{align}
where
\begin{align}
\label{eq:gamma0value} \gamma_0  = \left\{ \begin{array}{ll}  \dfrac{ \partial^3_z m(0)}{ 2 \partial ^2_z m(0)}\,, \quad\text{if}\quad d=1 \medskip\\
\dfrac{1}{2}  \left (D^2 m(0)\right )^{-1}  D ( \Delta m)(0)\,, \quad\text{if}\quad d>1
 \end{array} \quad\text{and}\quad  \right. V_0   = \sum_{k\geq 0}  2^k \log \left( 1 + m(2^{-k}z )\right) . 
\end{align}
Moreover, the convergence $U_\e \to U_0$ is locally uniform up to the second derivative. 
\end{itemize}
\end{thm}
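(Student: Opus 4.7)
The plan is to recast $(PU_\e)$ as a fixed-point equation and perturb around the explicit limit profile $U_0$. Taking logarithms, $(PU_\e)$ rewrites as
\begin{equation*}
L U_\e(z) = \log\!\big(\lambda_\e + m(z)\big) - \log I_\e(U_\e)(z),
\end{equation*}
with the finite-difference operator $L u(z) := u(z) - 2 u(z/2) + u(0)$. Its formal right inverse is $T \phi(z) := \sum_{k \geq 0} 2^k \phi(2^{-k} z)$, which I expect to be bounded on the weighted space $\Ea$ precisely when $\phi(0) = 0$ and $D\phi(0) = 0$: the factor $2^k$ is compensated by $2^{-k}|z|$ for zeroth and first derivatives near $0$, and by $2^{-k(1+\alpha)}$ for derivatives of order $\geq 2$ after combining with the weight $(1+|z|)^\alpha$. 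The affine functions lie in the kernel of $L$, so the inversion leaves a free linear coefficient $\gamma \cdot z$ that will have to be pinned down by a compatibility condition.

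First I would treat the limit problem $(PU_0)$: evaluating at $z=0$ and differentiating once force $\lambda_0 = 1$ and $D m(0) = 0$, which are consistent with the assumptions. The explicit formula $V_0 = T\!\left[\log(1+m)\right]$ is then well-defined and belongs to $\Eaz$ because \eqref{eq:log m} provides exactly the weighted decay of the first three derivatives required for the series to converge in $\nalph{\cdot}$. The linear part $\gamma_0$ is undetermined at zeroth order and will be selected below by matching at the next order in $\e$.

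For $\e > 0$, the compatibility conditions needed to apply $T$ to $g_\e(z) := \log(\lambda_\e + m(z)) - \log I_\e(U_\e)(z)$, namely $g_\e(0)=0$ and $D g_\e(0)=0$, yield the scalar and vector identities
\begin{equation*}
\lambda_\e = I_\e(U_\e)(0), \qquad D I_\e(U_\e)(0) = 0.
\end{equation*}
Expanding the Gaussian kernels in \eqref{eq:def_Ieps_int} in powers of $\e$ turns the second identity into an implicit equation whose leading order couples the cubic moment of the standard Gaussian with $D^3 m(0)$, and, by the non-degeneracy $D^2 m(0) \geq \mu_0 \Id$, can be solved by the implicit function theorem to produce a smooth map $U \mapsto \gamma_\e[U]$ on a ball $K_0$ of $\Ea$ centred at $U_0$, whose value at $\e=0$ is precisely \eqref{eq:gamma0value}. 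The original problem then becomes the fixed-point equation
\begin{equation*}
U_\e = \mathcal{F}_\e(U_\e) := \gamma_\e[U_\e] \cdot z + T\!\left[\log\!\big(\lambda_\e[U_\e] + m(\cdot)\big) - \log I_\e(U_\e)\right],
\end{equation*}
to which I would apply Banach's fixed-point theorem in $K_0$.

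The main obstacle is the quantitative control of the non-local operator $I_\e$: I expect to establish uniform bounds of the form $\nalph{I_\e(U) - 1} = O(\e^{\delta})$ and Lipschitz estimates $\nalph{I_\e(U) - I_\e(\tilde U)} \lesssim \e^{\delta} \nalph{U - \tilde U}$ for some $\delta > 0$, by Taylor-expanding the Gaussian integrand in $\e$ and using the weighted bounds on $D^2 U, D^3 U$ together with moments of the standard Gaussian. This is precisely where the threshold $\alpha \leq 2/5$ enters: the cubic Taylor remainder combined with the weight $(1+|z|)^{-\alpha}$ must stay integrable after the inversion by $T$, and breaks down beyond the threshold. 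Once these estimates are secured, the contraction property of $\mathcal{F}_\e$ follows, giving both existence and local uniqueness in $K_0$ and hence part (i). For part (ii) one passes to the limit $\e \to 0$ in the fixed-point identity using $I_\e(U_\e) \to 1$ locally uniformly and continuity of $T$, recovering $(\lambda_0, U_0) = (1, \gamma_0 \cdot z + V_0)$; the local uniform convergence up to second derivatives is then inherited from the uniform bounds on $K_0$ in $\Ea$.
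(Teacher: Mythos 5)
Your overall architecture—logarithm, inversion of the finite-difference operator $L$ by $T$, implicit resolution of the linear coefficient, Banach fixed point—follows the paper's strategy closely, and the analysis of $(PU_0)$ and the selection of $\gamma_\e$ by imposing $D I_\e(U_\e)(0) = 0$ is also on the mark. But there are two places where your argument misidentifies the mechanism and would not close.

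First, the contraction does \emph{not} come from $I_\e$ being $O(\e^\delta)$-small in the full norm $\nalph{\cdot}$. You claim $\nalph{I_\e(U) - 1} = O(\e^\delta)$ and a Lipschitz bound of the same order, but $\nalph{\cdot}$ controls third derivatives, and $\p_z^3 \log I_\e(U)$ contains the finite-difference term $\tfrac14\D(\p_z^3 V)$, which is bounded (with weight) only by $2^{\alpha - 1}\nalph{V}$ and is not $O(\e)$. No amount of Taylor expansion makes it small: you cannot expand $\p_z^3 V$ one order further, because $V$ is only $\mathcal{C}^3$. The contraction is therefore not of perturbative type; it is a structural factor. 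After composing the $2^{\alpha-1}$ bound on $\D(\p_z^3 V)$ with the series $T$, whose action on third derivatives produces $\sum_k 4^{-k} \cdot 2^{k\alpha} = 4/(4-2^\alpha)$, one gets the net factor $\kappa(\alpha) = 2^{1+\alpha}/(4-2^\alpha)$, which is $<1$ exactly when $\alpha < 2 - \log_2 3 \approx 0.415$. That is the real origin of the $\alpha\le 2/5$ threshold—not integrability of a Taylor remainder, as you suggest. The remaining parts of $\log I_\e$ (lower derivatives, the $m$-dependent part) do contribute $O(\e)$ Lipschitz corrections, but they merely perturb $\kappa(\alpha)$, they are not the source of contraction.

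Second, passing to the limit in part (ii) is not as direct as you state, precisely because of the third derivative. Uniform bounds in $\Ea$ plus Arzel\`a--Ascoli give local uniform convergence of $V_\e$ and its first and second derivatives only; the third derivative has no compactness. But $\gamma_0$ in \eqref{eq:gamma0value} is built from $\p_z^3 m(0)$, and identifying $\lim_{\e\to0}\gamma_\e(V_\e)$ requires, in effect, the limit of a Gaussian average of $\p_z^3 V_\e(\e\,\tilde y)$. So ``pass to the limit in the fixed-point identity'' is not enough: one needs to show that $\p_z^3 V_\e$ converges to $\tfrac43\p_z^3 m$ on balls of radius $O(\e R)$ around the origin. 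The paper devotes a separate contraction argument (differentiating the fixed-point equation three times, splitting the Gaussian integral into $|y_i|\le R/2$ and its complement, and absorbing the finite-difference term with a factor $\tfrac14$) to establish this. Your proposal is silent on this point, and without it the claim that the limit ``recovers $(\lambda_0, U_0)$'' with the stated $\gamma_0$ is unjustified.
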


An immediate remark is that the regularity required by \eqref{eq:log m}, and particularly the $\mathcal C^3$ regularity of $m$, is consistent with formula \eqref{eq:def_U0} which involves the pointwise value of third derivatives of $m$. Alternatively speaking we think that our result is close to optimal in terms of regularity.

It is important to notice that our result holds true for \emph{any} local mimimum $z_0$ such that 
\begin{equation}\label{comp cond}
m(z_0) <  1 + \inf m.
\end{equation}
One should define the functional spaces $\Ea$ and $\Eaz$ accordingly (and particularly replace the conditions $u(0) = 0$ and $Du(0) = 0$ by the conditions $u(z_0) = 0$ and $Du(z_0) = 0$), and then adapt \eqref{eq:def_lambda0}--\eqref{eq:def_U0} as follows, for the one-dimensional case:
\begin{align}
& \nonumber    \lambda_0  = 1- m(z_0),\\
& \label{eq:defU0trans}\ds U_0(z_0+h)   =  \gamma_0 \cdot h  + \sum_{k\geq 0} 2^k \log \left( 1 +m(2^{-k}(z_0+h))-m(z_0)\right),
\end{align}
where $\gamma_0$ is defined by the same formula as in \eqref{eq:gamma0value} but evaluated at $z_0$. Immediately, one sees that the compatibility condition \eqref{comp cond} is necessary to have the positivity of the term inside the $\log$ in \eqref{eq:defU0trans}.
As a consequence, we have:
\begin{cor}[Lack of uniqueness]\label{cor:uniqueness}$ $\\
If the selection function $m$ has at least two different local non-degenerate minima that verify the compatibility condition \eqref{comp cond}, there exists at least two pairs $(\lambda_\e,F_\e)$ solutions of  problem \eqref{eq:main_fep} for $\e$ small enough. 
\end{cor}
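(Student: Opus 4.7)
The plan is to apply \cref{convergence PUeps} separately at each of the prescribed local non-degenerate minima. Let $z_1\neq z_2$ be two local minima of $m$ satisfying the compatibility condition \eqref{comp cond}. For each $i\in\{1,2\}$, I would first translate $m$ so that the chosen minimum sits at the origin, and add the constant $m(z_i)$ to $\lambda_\e$ so that the translated mortality vanishes at zero, as was done for the main theorem. The translated function still satisfies \cref{def m}, because $D^2 m(z_i) \ge \mu_0 \Id$ by the non-degeneracy hypothesis, $m$ is $\mathcal C^3$ and bounded below, and condition \eqref{eq:log m} is invariant under translation. The compatibility condition \eqref{comp cond} is exactly what is needed so that $1 + m(z_i + \cdot) - m(z_i) > 0$ everywhere, which in turn guarantees that the series defining $V_0$ around $z_i$ in \eqref{eq:defU0trans} converges termwise.

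Applying \cref{convergence PUeps} to each translated problem yields, for $\e$ small enough, a solution $(\lambda_\e^{(i)}, U_\e^{(i)})$ with $U_\e^{(i)}$ in a ball of the corresponding $\mathcal{E}^\alpha$ space. Undoing the translation via the Hopf-Cole ansatz \eqref{eq:def_Ueps}, I recover two solutions $(\lambda_\e^{(i)}, F_\e^{(i)})$ of the original problem \eqref{eq:main_fep}, where
\begin{equation*}
F_\e^{(i)}(z) = \frac{1}{(2\pi)^{d/2}\e^d} \exp\!\left(-\frac{(z-z_i)^2}{2\e^2} - U_\e^{(i)}(z - z_i)\right),
\end{equation*}
and $\lambda_\e^{(i)} \to 1 - m(z_i)$ as $\e \to 0$.

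It remains to check that the two pairs are distinct. Because $U_\e^{(i)}$ stays bounded in $\mathcal{E}^\alpha$, the prefactor $\exp(-U_\e^{(i)}(z-z_i))$ grows at most polynomially (through integration of the gradient bound), whereas the Gaussian factor concentrates at $z_i$ with width $\e$. Thus $F_\e^{(1)}$ and $F_\e^{(2)}$ concentrate near different points for $\e$ small enough, so they cannot coincide up to a multiplicative constant as required by the homogeneity of \eqref{eq:main_fep}; in particular the two solutions are genuinely distinct. The main subtle point I foresee is that the local uniqueness provided by \cref{convergence PUeps}(i) only rules out multiple solutions \emph{within} a fixed ball of $\mathcal{E}^\alpha$ centered at one particular minimum, so the argument that $F_\e^{(1)}\neq F_\e^{(2)}$ really requires exploiting the concentration of the Gaussian factor at two different loci, rather than any global uniqueness statement.
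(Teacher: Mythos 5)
Your proof is correct and takes the same route the paper intends: apply \cref{convergence PUeps} after translating to each local minimum satisfying \eqref{comp cond}, note that the hypotheses of \cref{def m} are translation-invariant and that \eqref{comp cond} guarantees positivity of the argument of the logarithm in \eqref{eq:defU0trans}, and then observe that the two resulting profiles concentrate at different loci and hence cannot coincide (even up to the multiplicative constant allowed by homogeneity). You also correctly identify the subtle point that the theorem's uniqueness is only local in $\Ea$, so distinctness must come from the profile's behavior rather than from any global uniqueness. One small slip worth correcting: since $U_\e^{(i)}\in\Ea$ only has a \emph{bounded gradient}, $U_\e^{(i)}$ grows at most linearly, so the prefactor $\exp(-U_\e^{(i)}(z-z_i))$ can grow \emph{exponentially} rather than polynomially; this does not affect the conclusion, because the Gaussian factor $\exp(-(z-z_i)^2/(2\e^2))$ still dominates any fixed exponential rate for $\e$ small, so the two profiles concentrate at $z_1\neq z_2$ and are genuinely distinct.
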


\begin{figure}
\begin{center}
\includegraphics[height=.33\linewidth]{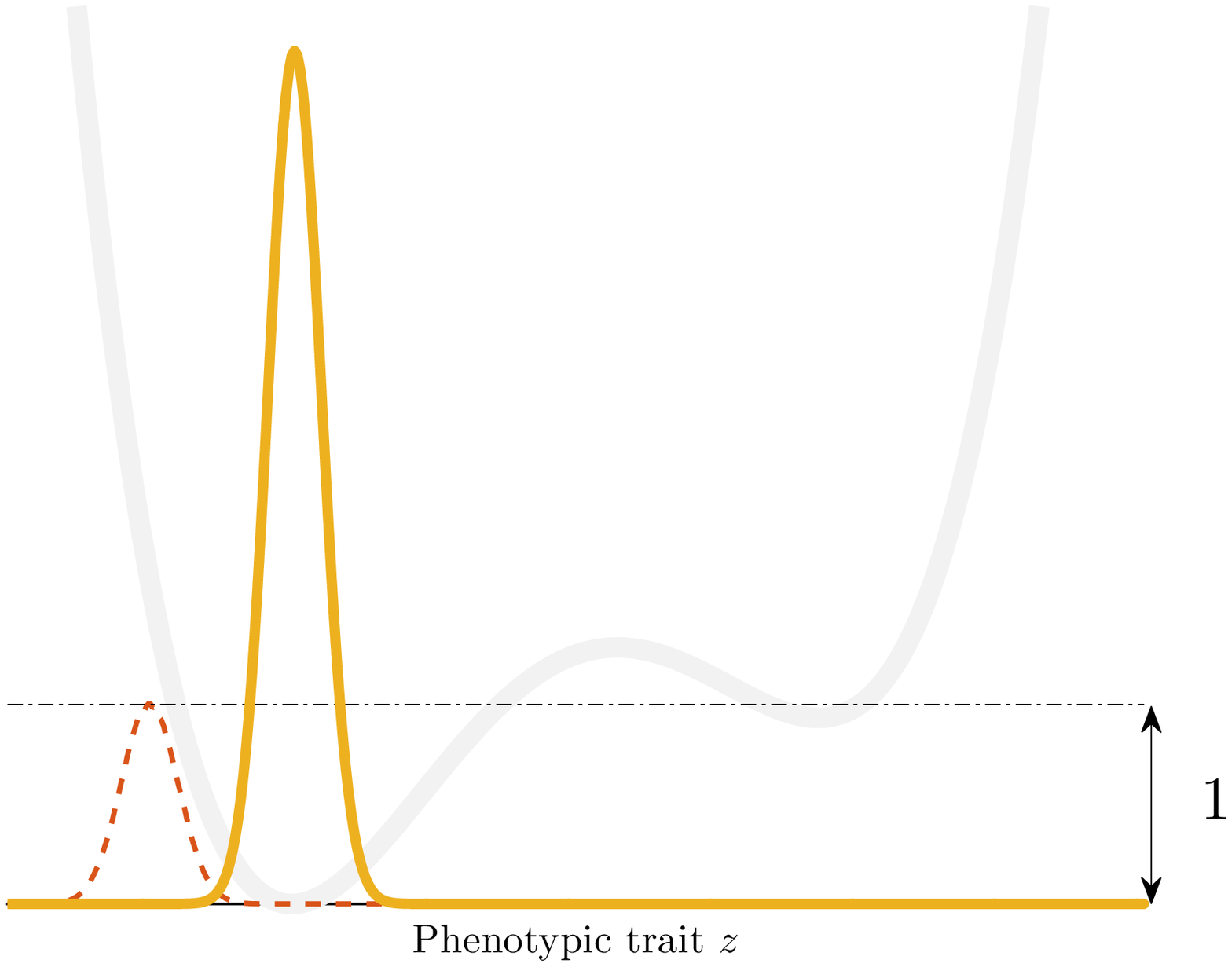} \quad
\includegraphics[height=.33\linewidth]{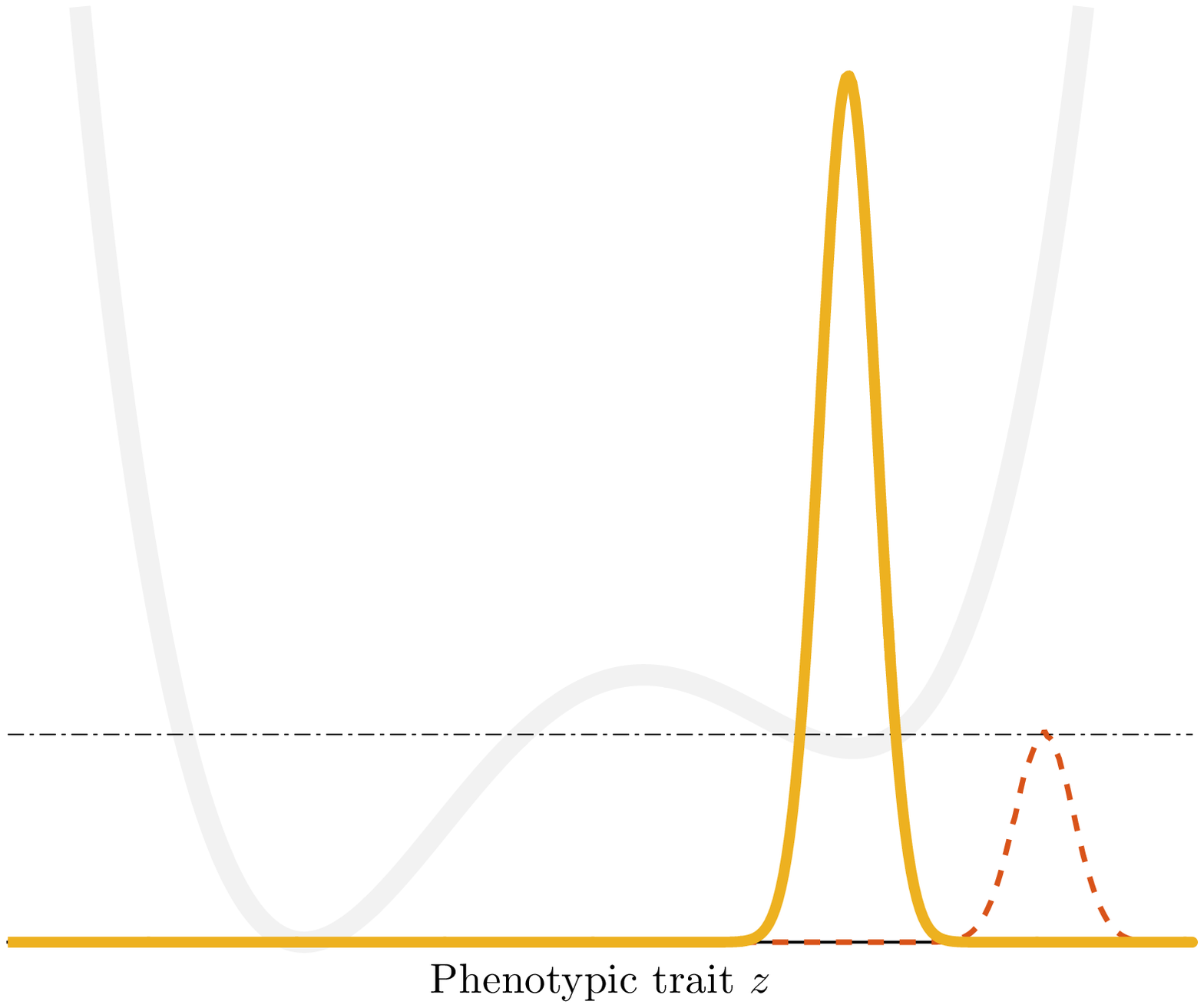} 
\caption{Numerical simulations of the stationary problem \eqref{eq:main_fep} with $\eps = 0.1$ in an asymmetric double-well mortality rate (grey line). The numerical equilibrium is in yellow plain line.  The only difference between the two simulations is the initial data (red dashed line). The simulations illustrate the lack of uniqueness for problem \eqref{eq:main_fep}.}
\end{center}\label{fig:doublewell}
\end{figure}

We  performed numerical simulations to illustrate this phenomenon (see Figure \ref{fig:doublewell}). The function $m$ is an asymmetric double well function. We solved the time marching problem \eqref{eq:time marching}   but on the renormalized density $F_\e/\int F_\e$ in order to catch a stationary profile. We clearly observed the co-existence of two equilibria for the same set of parameters, that were obtained for two different initializations of the scheme. 
However, let us mention that the question of  uniqueness in the case of a convex selection function $m$ is an open question,  to the extent of your knowledge.

This result is in contrast with analogous eigenvalue problems where $\mathcal{B}_\e$ is replaced with a linear operator, say $F_\e + \e^2 \Delta F_\e$  as in various quantitative genetics models with asexual mode of reproduction, see {\em e.g.} \cite{mirrahimiconcentration} and references mentioned above, or in the semi-classical analysis of the Schr\"odinger equation, see {\em e.g.} \cite{Sjostrand}. In the linear case, $\lambda_\e\in \R$ and $F_\e\geq 0, F_\e\not\equiv 0$ are uniquely determined (up to a multiplicative constant for $F_\e$) under mild assumptions on the potential $m$. This is the signature that $\mathcal{B}_\e$ \eqref{eq:Beps} is genuinely non-linear and non-monotone, so that possible extensions of the Krein-Rutman theorem for one-homogeneous operators, as in \cite{nonlinearkreinrutman}, are not applicable.

The existence  part $(i)$ was already investigated  in \cite{bourgeronspec} using the Schauder fixed point theorem and loose variance estimates. But the approach was not designed  to catch the asymptotic regime $\e\to 0$.
The current methodology gives much more precise information on the behavior of the solutions of the problem \eqref{eq:main_fep} in the regime of vanishing variance.

Theorem 1 provides a rigorous background for the connection between problem \eqref{eq:PUeps} and problem \eqref{eq:PU0} in a perturbative setting. It justifies that the problem \eqref{eq:main_fep} is well approximated by the    solution $(\lambda_0,U_0)$ of the  problem \eqref{eq:PU0}. Quite surprisingly, the value $\gamma_0$ of the linear part of the corrector function 
$U_0$ is resolved during the asymptotic analysis although it cannot be obtained readily from problem \eqref{eq:PU0} as mentioned above. It coincides with the heuristics of \cite{main} where the same coefficient was obtained by studying the formal expansion up to the next order in $\e^2$: $U_\e = U_0 + \e^2 U_1 + o(\e^2)$, and by identifying the equation on $U_1$ in which the value of $\gamma_0$ appears as another compatibility condition. Here the value of $\gamma_0$ is obtained directly as a by-product of the perturbative analysis.

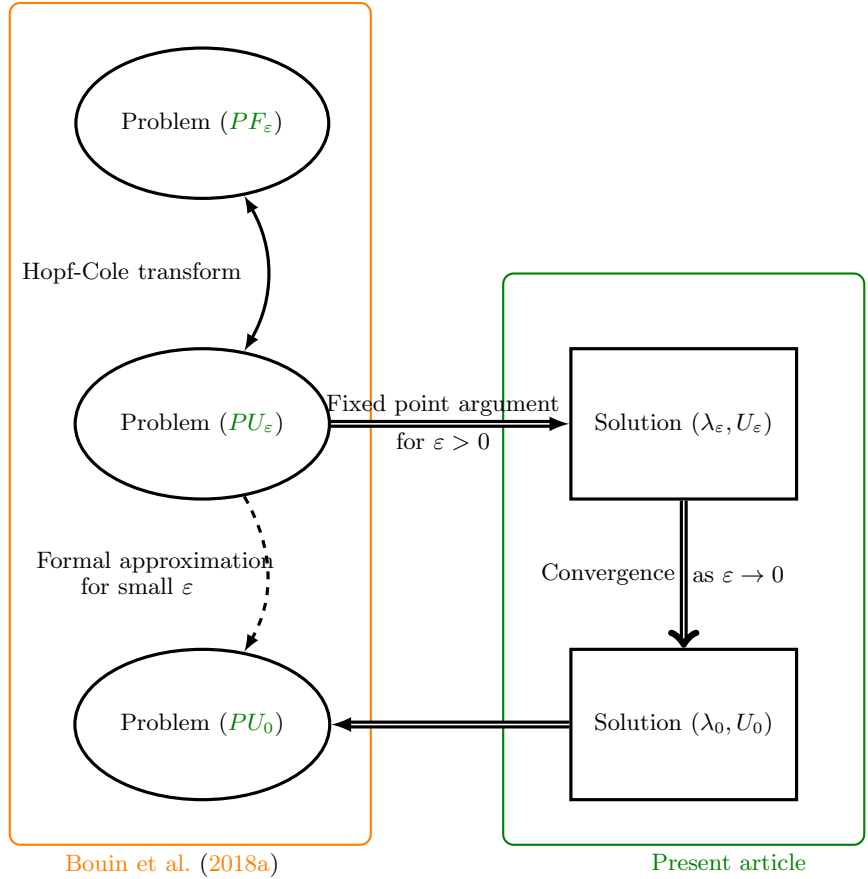
\begin{figure}
\begin{center}
\begin{tikzpicture}[scale=0.8]
\begin{footnotesize}
\draw[orange, rounded corners, thick] (-3.2,-7) rectangle (2.8,7) ;
\draw[green_mermoz, rounded corners, thick] (5,2.5) rectangle (11,-7) ;

\draw (-0.5,-7) node[below] {\cite{main} };
\draw (8.75,-7) node[below] {\textcolor{green_mermoz}{Present article}};

\node[draw, ellipse, minimum height=2cm, minimum width=3cm, very thick] (A) at (0,5) { Problem \eqref{eq:main_fep} };
\node[draw, ellipse, minimum height=2 cm, minimum width=3cm, very thick] (B) at (0,0) {Problem \eqref{eq:PUeps}};
\node[draw, ellipse, minimum height=2 cm, minimum width=3cm, very thick] (C) at (0,-5) {Problem \eqref{eq:PU0}};
\node[draw, minimum height=2 cm, minimum width=3cm, very thick] (E) at (8,-5) {Solution $(\lambda_0,U_0)$};
\node[draw, minimum height=2 cm, minimum width=3cm, very thick] (D) at (8,0) {Solution $(\lep,U_\e)$};

\draw[<->,>=latex, very thick] (A)to [bend left] (B);
\draw[->,>=latex, very thick, dashed] (B)to [bend left] (C);
\draw[->,>=latex, very thick, double] (B)to  (D);
\draw[->, very thick, double] (D)to  (E);
\draw[->,>= latex, very thick, double] (E)to  (C);

\draw (.8,2.5) node[left] {Hopf-Cole  transform};
\draw (0,-2.7) node[left] {for small $\e$};
\draw (-2.9,-2.3) node[right] {Formal approximation};

\draw (4,0) node[above] {Fixed point argument};
\draw (4,0) node[below] {for $\e > 0$};
\draw (8,-2.5) node[left] {Convergence};
\draw (8,-2.5) node[right] {as $\e \to  0$};

\end{footnotesize}
\end{tikzpicture}
\end{center}
\caption{Scope of our paper compared to precedent work	}
\label{fig:scheme}
\end{figure}


As mentioned above, 
our approach is very much  inspired, yet different to most of the current literature about asymptotic analysis of asexual models, where the limiting problem is a Hamilton-Jacobi equation, see \cite{perthame-book} for a comprehensive introduction, and references above. To draw a parallel with our problem, let us consider the case where $\mathcal{B}_\epsilon(f)$ is replaced with the (linear) convolution operator $  K_\e*f$, where the kernel has the scaling property $  K_\e = \frac1\e   K\left ( \frac{\cdot}{\e}\right )$, and $K$ is a probability distribution kernel. There, the small parameter $\e$ measures the typical size of the deviation between the offspring trait and the sole parental trait. In this context, it is natural to introduce the Hopf-Cole transform $U_\e = - \e \log F_\e$. Then, the problem is equivalent to the asymptotic analysis of the following equation as $\e\to0$:
\begin{align}\label{eq:asex}
\lambda_\e + m(z) =    \int_{\R^d}   K\left(y\right) \exp\left( \dfrac{U_\e(z)-U_\e(z-\eps y)}\eps\right)\, dy,
 \end{align}
For this model, it is known  that      $U_\e$ converges towards the viscosity solution of a Hamilton-Jacobi equation \cite{mirrahimiconcentration}:
\begin{equation}\label{eq:asex2}
\lambda_0 +  m(z) =  H(D U_0(z)) =   \int_{\R^d}   K\left(y\right) \exp\left(   D U_0(z)\cdot y \right)\, dy\, . 
\end{equation}
Note that the limiting equation on $U_0$ \eqref{eq:asex2} can be derived formally from \eqref{eq:asex} by a first order Taylor expansion on $U_\e$.

There are two  noticeable discrepancies between the asexual case \eqref{eq:asex}--\eqref{eq:asex2} and our problem involving the infintesimal model with small variance. Firstly, $\e$ plays a similar role in both cases, {\em i.e.} measuring typical deviations between offspring and parental traits. However, the appropriate normalization differs by a factor $\e$: it is $-\e \log F_\e$ in the asexual case, whereas it is $-\e^2 \log F_\e$ in our context, see \eqref{eq:def_Ueps}. This scaling difference is the signature of major differences between the two problems (asexual vs. sexual). Secondly, the two limiting problems \eqref{eq:asex2} and \eqref{eq:PU0} have completely different natures: a Hamilton-Jacobi PDE in the asexual case, vs. a finite difference equation in the sexual case. Moreover, due to the lack of a comparison principle in the original problem \eqref{eq:main_fep}, we could not envision a similar notion of viscosity solutions for \eqref{eq:PU0}. Instead, we use rigid contraction properties and a 
suitable perturbative analysis to construct a unique strong solution near the limiting problem, as depicted in Figure \ref{fig:scheme}.
 
\cite{raoulmirrahimiinfinitesimal} observed that the infinitesimal operator $\mathcal{B}_\e$ alone enjoys a uniform contraction property with respect to the quadratic Wasserstein distance, with a factor of contraction $1/2$. Recently, this was used by \cite{magal-raoul} to perform a hydrodynamic limit in a different regime than the one under consideration here, see also \cite{Raoul}. However, the combination of $\mathcal{B}_\e$ with a zeroth-order heterogeneous mortality $m(z)$ seems to destroy this nice structure (details not shown).

The next section is devoted to the reformulation of problem \eqref{eq:PUeps} into a fixed point problem, introducing a set of notation and the strategy to prove \Cref{convergence PUeps}. The organization of the paper is postponed to the end of the next Section.

Up until the last part of the article we implicitly work in dimension $d=1$, for the readers' convenience. In \cref{sec large dim} we  pinpoint the few elements of the proof that are specific to the one-dimensional case and give an extension to the higher-dimensional case in order to complete the  proof of \Cref{convergence PUeps}. 

\subsection*{Acknowledgement.} The authors are grateful to Laure Saint-Raymond for stimulating discussions at the early stage of this work. They are thankful to Sepideh Mirrahimi for pointing out the extension of the result to local maxima of the selection function, see Remark \ref{rem:SM}.
 Part of this work was completed when VC was on temporary leave to the PIMS (UMI CNRS 3069) at the University of British Columbia.  This project has received funding from the European Research Council (ERC) under the European Union’s Horizon 2020 research and innovation programme (grant agreement No 639638) and from the French National Research Agency with the project NONLOCAL (ANR-14-CE25-0013) and GLOBNETS (ANR-16-CE02-0009). 

\section{Reformulation of the problem as a fixed point}\label{sec reformulation}
\subsection{Looking for problem $(PU_\e)$}
\label{sec:reformulation1}
The equivalence between problem \eqref{eq:main_fep} and problem \eqref{eq:PUeps} through the transform \eqref{eq:def_Ueps} is not immediate. It is detailed in \cite{main}, but we recall here the key steps for the sake of completeness. Plugging \eqref{eq:def_Ueps} into problem \eqref{eq:main_fep}  yields, with the notation $q(z)=\frac{z^2}{2}$:
\begin{multline*}
\lambda_\e + \m(z) \\
= \f{  \displaystyle \iint_{\R^2} \exp\left[-\dfrac1{\eps^2} \left(   2 q\left(z-\dfrac{z_1+z_2}{2}\right) + q(z_1) + q(z_2)  - q(z) \right) - U_\e(z_1) - U_\e(z_2) + U_\e(z) \right] d z_1 d z_2}{\displaystyle  
\eps \sqrt{\pi}  \int_\R \exp\left(-\dfrac{q(z')}{\eps^2} - U_\e(z') \right) d z'}\, . 
\end{multline*}

When $\epsilon\to 0$, we expect the numerator integral to concentrate around the minimum of the principal term that is :
 \[ \argmin_{(z_1,z_2)} \left[ 2q \left(z-\frac{z_1+z_2}{2} \right) + q(z_1) + q(z_2) - q(z) \right] = \left(\frac{z}{2}, \frac{z}{2}\right).\]
We introduce the notation 
\[ \overline{z} = \dfrac{z}{2}  .\]
Using the change of variable $(z_1,z_2) = (\bz + \e y_1, \bz + \e y_2) $, we obtain the following equation :
\begin{equation}\label{eq:before limit 2}
\lambda_\e + \m(z) = \f{  \displaystyle \iint_{\R^2} \exp\left(-Q(y_1,y_2) - U_\e(\bz+\e y_1) - U_\e(\bz+\e y_2) + U_\e(z) \right) d y_1 d y_2}{\ds  
\sqrt{\pi}    \int_\R  \exp\left( - {y^2}/{2} - U_\e(\e y) \right) d y}\, , 
\end{equation}
where 
\begin{equation*}
\dfrac{1}{\e^2} \left[ 2q \left(z-\frac{z_1+z_2}{2} \right) + q(z_1) + q(z_2) - q(z) \right] = \dfrac{1}{2}y_1y_2 + \dfrac{3}{4}(y_1^2+y_2^2 ) = Q(y_1,y_2).
\end{equation*} 
\begin{definition}$ $\\
We denote by $Q$ the following quadratic form :
\begin{align*}
Q(y_1,y_2) = \dfrac{1}{2}y_1y_2 + \dfrac{3}{4}(y_1^2+y_2^2).
\end{align*}
\end{definition}
It is the residual quadratic form after our change of variable. We notice that $\frac{1}{\sqrt{2}\pi}\exp( -Q )$   is the density of a bivariate normal random variable with covariance matrix
\begin{equation}\label{eq:covariance}
\Sigma =\frac14 \begin{pmatrix}
3 & -1  \\ -1& 3
\end{pmatrix}.
\end{equation}
At the denominator of \eqref{eq:before limit 2} naturally arises $N$ the density function of a $\mathcal{N}(0,1)$ random variable.

Finally, \eqref{eq:before limit 2} is equivalent to problem \eqref{eq:PUeps}:
\begin{align*}
\lambda_\e + m(z) & = I_\e(U_\e)(z) \exp \left( U_\e(z)-2U_\e\left(\bz\right) + U_\e(0) \right) ,
\end{align*}
simply by conjuring $2 U_\e(z/2)$ at the numerator and $U_\e(0)$ at the denominator, resulting into the defintion of the remainder $I_\e(U_\e)$ \eqref{eq:def_Ieps_int} that will be controlled uniformly close to 1 in all our analysis. 

In the next section we explain how we reformulate the problem \eqref{eq:PUeps} into a fixed point argument in order to use a Banach-Picard fixed point theorem which prove our results rigorously.
\subsection{Some auxiliary functionals and the fixed point mapping}\label{eq:formal equivalence}
This section is devoted to the derivation of an alternative formulation for problem \eqref{eq:PUeps}.
Let $(\lambda_\epsilon, U_\epsilon)$ be a solution of problem \eqref{eq:PUeps} in $\R \times \Ea $.

The first step is to dissociate the study of $\lep$ and $U_\e$. We first evaluate the problem \eqref{eq:PUeps} at $z=0$. It yields the following  condition on $\lambda_\e$, since $m(0)=0$:
\begin{equation}\label{eq:nec_lep}
\lep = \iep{0}.
\end{equation}
%
Considering the terms $I_\e$ as a perturbation, we divide problem \eqref{eq:main_fep} by $I_\e(U_\epsilon)(z)$ which is positive, and we take the logarithm on each side. Then we obtain the following equation, considering \eqref{eq:nec_lep} :
\begin{equation}\label{eq:def_Gammaep}
 U_\e(z)-2U_\e(\bz)+U_\e(0)  = \log \left( \dfrac{\iep{0}+ \m(z)}{\iep{z}} \right)
\end{equation}
It would be tempting to transform \eqref{eq:def_Gammaep} into a fixed point problem by inverting the linear operator in the left-hand-side. However, the latter is not invertible as it contains linear functions in its kernel. Therefore we are led to consider linear contributions separately.

Our main strategy is to decompose the unknown $U_\e$ under the form 
\begin{align}\label{eq:necshape Ueps}
\boxed{ U_\e (z) = \gamma_\e z + V_\e(z),}
\end{align}
with $V_\e \in \Eaz$. This is consistent with the analytic shape of our statement in \eqref{eq:def_U0}, where $\gamma_0$ and $V_0$ have quite different features with respect to the function $m$.

Next, it is natural to differentiate \eqref{eq:def_Gammaep}.
One ends up with the following recursive equation for every $z \in \R$ 
\begin{equation}\label{eq:rec_Gammaep}
\partial_z  U_\e (z) - \p_z U_\e(\bz)= \p_z \left[ \log \left( \dfrac{I_\e(U_\e)(0) +  m}{I_\e(U_\e)(z)} \right) \right] (z) .
\end{equation}
One simply deduces that, if $U_\e$ exists and is regular, then we must have: 
\begin{equation}
\ds \p_z U_\e(z)= \p_z U_\e (0) + \sum_{k \geq 0}\p_z \left[ \log \left( \dfrac{I_\e(U_\e)(0) +  m}{I_\e(U_\e)(z)} \right) \right]  (2^{-k}z).
\end{equation}
One can formally integrate back the previous equation to obtain
\begin{equation}\label{eq:nec_ueph}
\ds  U_\e(z)= U_\e(0) + \p_z U_\e (0)z + \sum_{k\geq 0} 2^k \log \left( \dfrac{\iep{0}+ m}{I_\e(U_\e)(z)} \right) ( 2^{-k}z) .
\end{equation}
At this stage we formally identify :
\begin{itemize}[label=$\triangleright$]
\item $U_\e(0)=0$, since $U_\e \in \Ea$. This is not a loss of generality by homogeneity since $F_\e$ is itself defined up to a multiplicative constant in problem \eqref{eq:main_fep}. 
\item $\gamma_\e = \p_z U_\e(0)$. In fact this is part of the decomposition \eqref{eq:necshape Ueps} since $V_\e \in \Eaz$.
\end{itemize}
The real number $\gamma_\e$ is unknown at this stage, but it needs to verify some compatibility condition  to make the series converging in \eqref{eq:rec_Gammaep}--\eqref{eq:nec_ueph}. In particular, if we evaluate \eqref{eq:rec_Gammaep} at $z=0$ we obtain that $\gamma_\epsilon$ must satisfy
\begin{equation}\label{eq:nec impgammaep}
0 = \p_z I_\e(\gamma_\e \cdot + V_\e) (0).
\end{equation}
We will solve \eqref{eq:nec impgammaep} using an implicit function theorem in order to recover the value $\gamma$ associated with a given $V$. 
Beforehand, we introduce the following notation:
\begin{definition}[Finite differences operator $\D$]$ $\\
We define the finite differences functional $\D$ as
\begin{equation*}
\D(V)(y_1,y_2,z) = V(\bz) - \frac12 V(\bz +\epsilon y_1) - \frac12 V(\bz +\epsilon y_2)\, , \quad \bz = \dfrac{z}{2}\, .
\end{equation*}
\end{definition}

We introduce the following auxiliary functional which makes the link between $\gamma_\e$ and $V$.
\begin{definition}[Auxiliary function $\Je$]$ $\\
We define the functional $\Je: \R \times \Eaz  \rightarrow  \R$ as follows 
\begin{equation}\label{eq:def_Jeps}
\Je(g,V)= \dfrac{1}{\e^2 \sqrt{2}\pi }\iint_{\R^2} \exp\left[-Q(y_1,y_2) - \e g (y_1+y_2) +2 \D(V)(y_1,y_2,0)  \right] \D(\p_z V)(y_1,y_2,0) \,   d y_1 d y_2. 
\end{equation}
\end{definition}
The implicit relationship \eqref{eq:nec impgammaep} is equivalent to $\Je(\gamma_\e,V_e) = 0$. From this perspective, 
the following result is an important preliminary step.
\begin{prop}[Existence and uniqueness of $\gamma_\e$]\label{existunique gammaeps}$ $\\
For any ball $K \subset \Eaz$, there exists $\e_K$, such that for all $\e \leq \e_K$ and for any $V  \in K $, there exists a unique solution $\gamma_\e(V) $ to the equation :
\begin{align*}
\text{Find $\gamma\in (-\RK,\RK)$ such that:} \quad\Je(\gamma,V) = 0,
\end{align*}
where the bound $\abs{\gamma_\e(V)} < \RK$ is defined as 
\begin{align}\label{def G+G-}
\RK=  \max \left( \ds  \dfrac{  \ds \nalph{K}   \ \iint _{\R^2} \exp(-Q (y_1,y_2) ) \left ( y_1^2  +   y_2^2  \right ) dy_1dy_2 +8}{ 2 \p_z^2 m(0)}   ; \nalph{K} \right)\, .
\end{align}
\end{prop}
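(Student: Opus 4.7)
The plan is to apply the intermediate value theorem to $g \mapsto \Je(g, V)$ on $(-\RK, \RK)$, complemented by a uniform strict monotonicity argument that takes care of uniqueness. First, differentiating under the integral sign (legitimate thanks to the Gaussian domination by $e^{-Q}$ for $g$ bounded and $\e$ small) gives
\[
\p_g \Je(g, V) = -\frac{1}{\e \sqrt{2}\pi} \iint_{\R^2} (y_1+y_2)\, \D(\p_z V)(y_1,y_2,0) \exp\bigl[-Q - \e g(y_1+y_2) + 2\D(V)(y_1,y_2,0)\bigr] dy_1 dy_2.
\]

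The heart of the proof is to analyze this integral as $\e \to 0$. Since $V \in \Eaz$ satisfies $V(0) = \p_z V(0) = 0$ and $\p_z^2 V(0) \geq \p_z^2 m(0) \geq \mu_0$, Taylor's formula with remainders controlled by $\nalph{V}$ yields
\[
\D(\p_z V)(y_1,y_2,0) = -\tfrac{\e}{2}(y_1+y_2)\,\p_z^2 V(0) + O\bigl(\e^2(y_1^2+y_2^2)\bigr), \quad 2\D(V)(y_1,y_2,0) = O\bigl(\e^2(y_1^2+y_2^2)\bigr),
\]
where the $O$-terms are tempered by the Gaussian weight. Substituting and using dominated convergence,
\[
\p_g \Je(g, V) \xrightarrow[\e \to 0]{} \frac{\p_z^2 V(0)}{2\sqrt{2}\pi} \iint (y_1+y_2)^2 e^{-Q} dy_1 dy_2 = \frac{\p_z^2 V(0)}{2} \geq \frac{\p_z^2 m(0)}{2} > 0,
\]
uniformly for $V \in K$ and $|g| \leq \RK$. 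Consequently, for $\e \leq \e_K$ small enough, $\p_g \Je(g, V) \geq c_0 := \p_z^2 m(0)/4$, which provides strict monotonicity in $g$ and hence uniqueness of any zero on $(-\RK, \RK)$.

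For existence, the same Taylor expansion applied at $g = 0$ yields $|\Je(0, V)| \leq \tfrac{\nalph{K}}{4\sqrt{2}\pi}\iint e^{-Q}(y_1^2+y_2^2)\, dy + o(1)$ as $\e \to 0$, since the leading linear contribution $-\tfrac{\e}{2}(y_1+y_2)\p_z^2 V(0)$ vanishes by the symmetry of $e^{-Q}$ and only the quadratic remainder survives. Combined with the lower bound on $\p_g \Je$,
\[
\Je(\RK, V) \geq \Je(0, V) + c_0 \RK, \qquad \Je(-\RK, V) \leq \Je(0, V) - c_0 \RK,
\]
and the explicit definition of $\RK$ in \eqref{def G+G-} is calibrated precisely so that $c_0 \RK > |\Je(0, V)|$, forcing $\Je(-\RK, V) < 0 < \Je(\RK, V)$. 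The intermediate value theorem then provides the unique $\gamma_\e(V) \in (-\RK, \RK)$ with $\Je(\gamma_\e(V), V) = 0$.

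The main obstacle will be to make the Taylor estimates uniform in $V \in K$: the $\Ea$-norm only bounds $\p_z^2 V$ and $\p_z^3 V$ with the slow weight $(1+|z|)^{-\alpha}$, so for $\e|y| \gg 1$ the remainders grow polynomially in $|y|$ and must be absorbed by $e^{-Q}$. One must also verify that the perturbation factor $\exp[-\e g(y_1+y_2) + 2\D(V)]$ stays uniformly close to $1$ in $L^1(e^{-Q}\,dy)$ for $|g| \leq \RK$ and $V \in K$; this last requirement is the constraint that ultimately fixes the smallness threshold $\e_K$ in terms of $\nalph{K}$ and $\RK$.
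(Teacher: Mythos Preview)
Your proposal is correct and follows essentially the same route as the paper: Taylor-expand $\D(\p_z V)$ and $\D(V)$ at the origin to obtain a uniform lower bound $\p_g\Je(g,V)\geq c_0>0$ from $\p_z^2V(0)\geq \p_z^2m(0)$, bound $|\Je(0,V)|$ by $\tfrac{\nalph{K}}{4\sqrt{2}\pi}\iint e^{-Q}(y_1^2+y_2^2)\,dy + o(1)$ via the same expansion and the odd-symmetry cancellation, and conclude by the intermediate value theorem plus strict monotonicity. The only cosmetic difference is that the paper keeps the sharper slope $\p_z^2V(0)/2 + O(\e)$ rather than your cruder $c_0=\p_z^2m(0)/4$, so the constant $\RK$ is in fact calibrated for the former; with your $c_0$ the inequality $c_0\RK>|\Je(0,V)|$ still holds (since $1/8>1/(4\sqrt{2}\pi)$), but ``precisely'' slightly overstates the match.
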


Next we define the main quantity we will work with: the double integral $I_\e$ which is the rescaled infinitesimal operator. For convenience we define it as a mapping on $\Eaz$. It is compatible with \eqref{eq:def_Ieps_int} because of the decomposition  \eqref{eq:necshape Ueps}. 
\begin{definition}[Auxiliary functional $\I_\e$]\label{def Ieps}$ $\\
We define the functional $\I_\e: \Eaz   \rightarrow   \mathcal C^3(\R)$ as follows
\begin{align}\label{eq:def_Ieps}
 \I_\e(V)(z) =  \dfrac{\ds  \iint_{\R^2} \exp\big(-Q(y_1,y_2)-\e \gamma_\e(V)(y_1+y_2)  +2\D(V)(y_1,y_2,z) \big)d y_1 d y_2  }
                                 {\ds\sqrt{\pi }  \int_\R  \exp\left( - y^2/2 -\e \gamma_\e (V) y + V(0)- V(\epsilon y) \right)  d y} . 
\end{align}
\end{definition}
Finally, in view of \eqref{eq:nec_ueph} and \eqref{eq:necshape Ueps}, we see that $V_\e$ must be a solution of this implicit equation :
\begin{equation}\label{eq:nec_Veps}
V_\e (z) =\sum_{k\geq 0} 2^k \log \left( \dfrac{\I_\e(V_\e)(0)+ m}{\I_\e(V_\e)(z)} \right) ( 2^{-k}z) , \ \hbox{ for every } z \in \R. 
\end{equation}
This justifies the introduction of our central mapping, upon which our fixed point argument will be based. 
\begin{definition}[Fixed point mapping]\label{def:H}$ $\\
We define the mapping $\H: \Eaz \rightarrow \Eaz$ as follows
\begin{equation}\label{eq:defH}
\H(V)(h) =  \sum_{k \geq 0} 2^k \log \left( \dfrac{\I_\e(V) (0)+ m(2^{-k} h)}{\I_\e(V) (2^{-k}h)} \right) .
\end{equation} 
\end{definition}

\subsection{Reformulation of the problem}
We are now in position to write our main result for this Section:

\begin{thm}[Existence and uniqueness of the fixed point]\label{existunique fixedpoint}$ $\\
There is a ball $K_0 \subset \Eaz$ and a positive constant $\e_0$ such that for every $\e\leq \e_0$, the mapping $\H$ admits a unique fixed point in $K_0$.
\end{thm}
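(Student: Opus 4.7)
The plan is to apply the Banach fixed point theorem to $\H$ on a closed $\|\cdot\|_\alpha$-ball $K_0 \subset \Eaz$. Three items must be checked: (a) $\H(V) \in \Eaz$ for every $V \in K_0$; (b) $\H$ maps $K_0$ into itself; (c) $\H$ is a contraction on $K_0$ once $\e$ is small enough. The underlying heuristic is that $\I_\e(V) \to 1$ as $\e \to 0$, uniformly over bounded sets of $\Eaz$, so that $\H$ is a small perturbation of the explicit map $V \mapsto \sum_{k \geq 0} 2^k \log(1 + m(2^{-k}\cdot))$, which is exactly the $V_0$ appearing in \eqref{eq:gamma0value}.

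The preparatory step is a pair of quantitative estimates on the functional $\I_\e$ defined in \eqref{eq:def_Ieps}. For any bounded $K \subset \Eaz$ there should exist $\beta > 0$ (depending on $\alpha$) and $C = C(K)$ such that, for $\e$ small enough and every $V, W \in K$,
\[ \nalph{\I_\e(V) - 1} \leq C \e^{\beta} \quad \text{and} \quad \nalph{\I_\e(V) - \I_\e(W)} \leq C \e^{\beta} \nalph{V - W}, \]
where $\nalph{\cdot}$ is extended to $\mathcal{C}^3(\R)$ in the natural weighted sense. Both are obtained by Taylor expanding $\D(V)(y_1, y_2, z)$ and $V(\e y) - V(0)$ inside the Gaussian integrals, using the pointwise bounds $|DV| \leq \nalph{V}$ and $|D^k V(z)| \leq \nalph{V}/\vphia(z)$ for $k = 2, 3$, combined with the smallness $\e |\gamma_\e(V)| \lesssim \e$ from \Cref{existunique gammaeps}. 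The Lipschitz estimate additionally requires Lipschitz continuity of $V \mapsto \gamma_\e(V)$, supplied by the implicit function theorem applied to $\Je(\gamma_\e(V), V) = 0$ once the partial derivative $\partial_g \Je$ is shown to be bounded away from zero uniformly in small $\e$.

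For stability I would take $K_0$ to be the $\nalph{\cdot}$-ball of some radius $R$ around the candidate $V_0$ of \eqref{eq:gamma0value}. Membership in $\Eaz$ for $V \in K_0$ is verified at $z = 0$: $\H(V)(0) = 0$ holds trivially since $m(0) = 0$; $\partial_h \H(V)(0) = 0$ reduces to $\partial_z \I_\e(V)(0) = 0$, which is precisely the relation \eqref{eq:nec impgammaep} enforcing the choice of $\gamma_\e(V)$; and $D^2 \H(V)(0) \geq D^2 m(0)$ follows from differentiating the defining series twice at $0$ and invoking the $O(\e^\beta)$ smallness of $\partial_z^2 \I_\e(V)(0)$. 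The radius bound $\nalph{\H(V) - V_0} \leq R$ is obtained by subtracting series term by term and bounding each summand using the first preparatory inequality; the resulting series converges because small scales $2^{-k}h \to 0$ contribute $\lesssim 2^{-k}$ after two derivatives of the log, thanks to the non-degeneracy $D^2 m(0) \geq \mu_0 \Id$, while the growth condition \eqref{eq:log m} controls the summand uniformly at large $h$.

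Contraction is the analogous computation for two iterates: writing
\[ \H(V_1)(h) - \H(V_2)(h) = \sum_{k \geq 0} 2^k \left[ \log \frac{\I_\e(V_1)(0) + m}{\I_\e(V_1)} - \log \frac{\I_\e(V_2)(0) + m}{\I_\e(V_2)} \right] (2^{-k}h), \]
the Lipschitz estimate on $\I_\e$ together with the same summability mechanism yields $\nalph{\H(V_1) - \H(V_2)} \leq C' \e^\beta \nalph{V_1 - V_2}$, which is a strict contraction for $\e$ small. The main obstacle is the preparatory step in the weighted $\|\cdot\|_\alpha$-norm: one must differentiate numerator and denominator of \eqref{eq:def_Ieps} up to third order in $z$, propagate the weight $\vphia$ through the nested compositions $V(z/2 + \e y_j)$, and match the resulting $\e$-smallness against the geometric factor $2^k$ in the series. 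The threshold $\alpha \leq 2/5$ is expected to emerge precisely from this balance, as the largest exponent for which all weighted Gaussian integrals remain integrable and summability in $k$ is preserved through second and third derivatives.
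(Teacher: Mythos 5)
The proposal correctly identifies the general scheme (Banach fixed point on a ball of $\Eaz$, with $\I_\e\to 1$ driving the estimates), and the stability part (items (a), (b)) is essentially what the paper does. But the contraction argument has a genuine gap: you assert that $\nalph{\I_\e(V) - \I_\e(W)} \lesssim \e^{\beta}\nalph{V-W}$ (including third derivatives) and deduce $\nalph{\H(V_1)-\H(V_2)}\lesssim \e^\beta\nalph{V_1-V_2}$, i.e.\ a Lipschitz constant that vanishes as $\e\to 0$. This is false. The $O(\e)$ gain from Taylor expanding $\D(\p_z^j V)$ requires one more derivative of $V$ than the one being estimated; since $\Eaz$ controls only up to third derivatives, the expansion cannot be pushed past $\D(\p_z^2 V)$. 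For the third derivative one only has the crude bound
\[
(1+|z|)^\alpha\,\Bigl|\tfrac14\,\D(\p_z^3 V)(y_1,y_2,z)\Bigr|
\leq 2^{\alpha-1}\nalph{V}\Bigl(1+\tfrac{\e^\alpha}{4}\bigl[|y_1|^\alpha+|y_2|^\alpha\bigr]\Bigr),
\]
which is $O(1)$ in $\e$, not $O(\e)$. Correspondingly, the third-derivative component of $\mathrm{Lip}(\H)$ is of order $\kappa(\alpha) + \e^\alpha C_K$ with $\kappa(\alpha)=2^{1+\alpha}/(4-2^\alpha)$. The contraction therefore comes not from $\e$-smallness but from the structural factor $\kappa(\alpha)<1$, which in turn requires $\alpha<2-\log_2 3\approx 0.415$; this is the true origin of the threshold $\alpha\leq 2/5$, not integrability of Gaussian moments or summability in $k$ as you suggest (those hold for all $\alpha<1$).

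In short: you would need to show that your $\e^\beta$-type Lipschitz bound holds up to third derivatives, and that is impossible with only $\mathcal C^3$ control on $V$. The argument must be bifurcated, as the paper does: derivatives of order $1$ and $2$ get $O(\e)$ bounds, while the order-$3$ term is handled by the factor $2^{\alpha-1}$ inherited from the dilation $z\mapsto z/2$ combined with the geometric weight $4^{-k}=2^k\cdot 2^{-2k}$ in the series for $\p_z^3\H(V)$. Once that dichotomy is in place the rest of your outline (choice of $\gamma_\e$ via implicit relation, invariance of a ball, assembly via Banach) matches the paper.
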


To conclude, it is sufficient to check that solving problem \eqref{eq:PUeps}, on the ball $K_0$, and seeking a fixed point for $\H$ in $K_0$ are equivalent problems for $\e\leq \e_0$ small enough.


\begin{prop}[Reformulation of the problem \eqref{eq:PUeps}]\label{solutionisfixedpoint}$ $\\
There is a ball $K_0'$ of $\Ea$,  and a positive constant $\e_0'$ such that for every $\e\leq \e_0'$, the following statements are equivalent:
\begin{itemize}[label=$\triangleright$]
\item $(\lambda_\e,U_\e)$ is a solution of the problem \eqref{eq:PUeps} in $\R \times K_0'$. 
\medskip
\item $U_\e= \gamma_\e(V_\e) \cdot + V_\e$, with $V_\e \in \Eaz\cap K_0'$, $\H(V_\e) = V_\e$, and $\lambda_\e = \I_\e(V_\e)(0)$.
\end{itemize}
Moreover, the statement of \cref{existunique fixedpoint} holds true in the set $\Eaz\cap K_0'$.
\end{prop}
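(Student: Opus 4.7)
The plan is to carry out rigorously the formal manipulations of \S\ref{eq:formal equivalence} on a well-chosen ball. I would fix $K_0' \subset \Ea$ so that the map $U \mapsto V := U - \p_z U(0)\cdot$ sends $K_0'$ into the domain of validity of Proposition \ref{existunique gammaeps}, and pick $\e_0' \le \e_{K_0'}$ small enough that $\I_\e(V)$ stays uniformly close to $1$ for $V$ in the associated ball of $\Eaz$. A direct substitution confirms the key compatibility: when $U = \gamma z + V$ with $\gamma = \gamma_\e(V)$, the two nonlocal residuals coincide, $I_\e(U) \equiv \I_\e(V)$, which identifies $\lambda_\e = I_\e(U_\e)(0) = \I_\e(V_\e)(0)$ once \eqref{eq:PUeps} is evaluated at $z = 0$.

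\textbf{Forward direction.} Given a solution $(\lambda_\e, U_\e) \in \R \times K_0'$ of \eqref{eq:PUeps}, set $\gamma_\e := \p_z U_\e(0)$ and $V_\e(z) := U_\e(z) - \gamma_\e z$, so $V_\e(0) = \p_z V_\e(0) = 0$. Differentiating \eqref{eq:PUeps} once and evaluating at $z = 0$ uses $\p_z m(0) = 0$ and the vanishing of the first derivative of $U_\e(z) - 2U_\e(z/2) + U_\e(0)$ at the origin to give $\p_z I_\e(U_\e)(0) = 0$, i.e.\ $\Je(\gamma_\e, V_\e) = 0$; by uniqueness in Proposition \ref{existunique gammaeps}, $\gamma_\e = \gamma_\e(V_\e)$. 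Differentiating twice and evaluating at $0$ expresses $D^2 V_\e(0)$ as $D^2 m(0)$ plus a nonnegative second-moment contribution from the integral operator, giving $V_\e \in \Eaz \cap K_0'$. Dividing \eqref{eq:PUeps} by $\I_\e(V_\e)(z) > 0$, taking the logarithm, and using the cancellation of the linear part $\gamma_\e z$ in $U_\e(z) - 2 U_\e(z/2) + U_\e(0)$ yields
\begin{equation*}
V_\e(z) - 2 V_\e(z/2) = \log\frac{\I_\e(V_\e)(0) + m(z)}{\I_\e(V_\e)(z)}.
\end{equation*}
Rescaling $z \mapsto 2^{-k}z$, multiplying by $2^k$, and summing telescopes to
\begin{equation*}
V_\e(z) = \sum_{k=0}^{N-1} 2^k \log\frac{\I_\e(V_\e)(0) + m(2^{-k}z)}{\I_\e(V_\e)(2^{-k}z)} + 2^N V_\e(2^{-N}z).
\end{equation*}
Since $V_\e \in \mathcal{C}^2$ with $V_\e(0) = \p_z V_\e(0) = 0$, the remainder is $O(2^{-N}|z|^2)$ and vanishes as $N \to \infty$, hence $V_\e = \H(V_\e)$.

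\textbf{Reverse direction.} Given a fixed point $V_\e \in \Eaz \cap K_0'$ of $\H$, set $U_\e(z) := \gamma_\e(V_\e)z + V_\e(z)$ and $\lambda_\e := \I_\e(V_\e)(0)$. Writing the identity $\H(V_\e) = V_\e$ at $h$ and at $h/2$ and subtracting twice the latter from the former collapses the infinite series telescopically to
\begin{equation*}
V_\e(h) - 2 V_\e(h/2) = \log\frac{\I_\e(V_\e)(0) + m(h)}{\I_\e(V_\e)(h)}.
\end{equation*}
Adding $V_\e(0) = 0$ and the identically zero linear contribution $\gamma_\e(V_\e)(h - 2 \cdot h/2)$ on the left, then exponentiating, recovers \eqref{eq:PUeps} with $I_\e(U_\e) \equiv \I_\e(V_\e)$ and $\lambda_\e = \I_\e(V_\e)(0)$.

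\textbf{Main obstacle.} The hard work is quantitative and lies beneath the clean algebra: one must keep $\I_\e(V)$ uniformly close to $1$ on $K_0'$ (via Gaussian concentration as $\e \to 0$) so the logarithms are well-defined and the series defining $\H$ converges in $\Eaz$-norm (using the growth control \eqref{eq:log m}), one must justify the vanishing of the tail $2^N V_\e(2^{-N}z)$ in the forward direction, and one must propagate the second-derivative lower bound $D^2 V_\e(0) \ge D^2 m(0)$ through both directions — this is precisely why the slightly constrained subspace $\Eaz$ enters the picture. Once the equivalence holds, applying Theorem \ref{existunique fixedpoint} inside $\Eaz \cap K_0'$ delivers the existence and uniqueness of $(\lambda_\e, U_\e)$ in $K_0'$ asserted by the proposition.
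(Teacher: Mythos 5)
Your proposal follows the same overall strategy as the paper (decompose $U_\e = \gamma_\e\cdot + V_\e$, use \cref{existunique gammaeps} to identify $\gamma_\e$, then telescope to obtain $\H(V_\e) = V_\e$), and the reverse direction and the $\lambda_\e$ identification match the paper's. Your direct telescoping in the forward direction — writing $V_\e(z) = \sum_{k<N} 2^k \log(\cdots)(2^{-k}z) + 2^N V_\e(2^{-N}z)$ and killing the remainder using $V_\e(0) = \p_z V_\e(0) = 0$ and $\mathcal C^2$ regularity — is a clean, valid alternative to the paper's route of first summing the differentiated recursion for $\p_z V_\e$ and then integrating back; both are fine.

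There is, however, a genuine gap in the forward direction around the $\Eaz$ membership. You invoke uniqueness in \cref{existunique gammaeps} to conclude $\gamma_\e = \gamma_\e(V_\e)$, but that proposition is only stated for $V \in \Eaz$, not for a general $V \in \Ea$; in particular, the bound $\p_z^2 V(0) \geq \p_z^2 m(0)$ is what makes $\Je(\cdot, V)$ strictly monotone and hence invertible. So you must establish $V_\e \in \Eaz$ \emph{before} you can assert $\gamma_\e = \gamma_\e(V_\e)$, not after. Moreover, your description of that step — "$D^2 V_\e(0)$ as $D^2 m(0)$ plus a nonnegative second-moment contribution from the integral operator" — is not what actually happens. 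Differentiating \eqref{eq:PUeps} twice at $0$ gives
\begin{equation*}
\p_z^2 m(0) = I_\e(U_\e)(0)\left( W_\e^{(2)}(0) + \tfrac12 \p_z^2 U_\e(0) \right),
\end{equation*}
and $W_\e^{(2)}(0)$ is \emph{not} sign-definite (it contains the term $\langle dG_\e, \tfrac12 \D(\p_z^2 V)\rangle$, which can be negative). The paper instead proves the inequality quantitatively: it shows $I_\e(U_\e)(0)$ is uniformly close to $1$ and $W_\e^{(2)}(0) = O(\e)$, whence $\p_z^2 U_\e(0) \geq \p_z^2 m(0)$ for $\e$ small. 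Crucially, because at this stage one only knows $V_U = U_\e - \p_z U_\e(0)\cdot \in \Ea$ (not $\Eaz$) and $\gamma_U$ is only known to be $\p_z U_\e(0)$ (not yet $\gamma_\e(V_U)$), the estimates of \cref{estim Ieps} and \cref{unif boundsW} must be rerun with $\gamma_\e(V)$ replaced by the a priori bounded $\gamma_U$; this adaptation is explicitly flagged in the paper and is missing from your sketch. Finally, you gesture at the choice of $K_0'$ but do not pin it down: the paper takes $K_0'$ of radius $R_0' = R_0 + \sup_{V\in K_0}|\gamma_\e(V)|$ precisely so that $\gamma_\e(V_\e)\cdot + V_\e$ lands back in $K_0'$, and then notes that the fixed point on $K_0'$ coincides with the one on $K_0$ by uniqueness — a point you should make explicit to justify the "moreover" clause.
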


The main mathematical difficulties are stacked into \cref{existunique fixedpoint}.
The rest of the article is organized as follows :
\begin{itemize}[label=$\triangleright$]
\item In \cref{sec existunique gammaeps}, we  justify why the function $\gamma_\e$ is well defined in Proposition \ref{existunique gammaeps}.\medskip
\item Then in \cref{sec prop Ieps}, we provide the main properties and the key estimates of the nonlocal operator $\I_\e$. We point out why this term plays the role of a perturbation between problem \eqref{eq:PUeps} and problem \eqref{eq:PU0}. In \cref{sec a priori estimates} we prove crucial contraction estimates.\medskip
\item Those estimates are the main ingredients of the proof of properties of $\H$ in \cref{sec properties H}:  most notably the finiteness of $\H(V)$, and the fact that $\H$ is a contraction mapping.\medskip
\item  This allows us to establish the proof of \cref{existunique fixedpoint,solutionisfixedpoint}, and finally to come back to the proof of our main result \Cref{convergence PUeps} in the \cref{sec proof reformulation,sec proof convergence PUeps}.\medskip
\item  Section  \ref{sec large dim} is devoted to those specific arguments that require an extension to the higher dimensional case $d>1$.
\end{itemize} 
 \section{Well-posedness of the implicit function $\gamma_\e$}\label{sec existunique gammaeps}
 \subsection{Heuristics on finding $\gamma_\e$}\label{sec heuristicsgammaeps}
We consider $V \in \Eaz$, and we look for solutions $\gamma_\e$ of $\Je(\gamma_\e,V) = 0$, or equivalently :
\begin{equation}\label{eq:heurgam_equ}
0 = \dfrac{1}{\e^2\sqrt{2}\pi} \ds  \iint_{\R^2} \exp \left[-Q(y_1,y_2)  - \e \gamma_\e (y_1+y_2) +2 \D(V)(y_1,y_2,0) \right] \Big( \D(\p_z V)(y_1,y_2,0)\Big)  d y_1 d y_2  ,
\end{equation}
in accordance with \eqref{eq:def_Jeps}.  We will see here  how  a Taylor expansion of the right-hand-side around $\e = 0$ helps to  understand why it defines a unique $\gamma_\e$  in a given interval for small $\e$. We will show formally why $\Je(\cdot,V)$ can be uniformly approximated by a non-degenerate linear function for small $\e$.

We expand the right-hand-side with respect to $\epsilon$:
\begin{align*}
\nonumber \ds & \dfrac{1}{\e^2\sqrt{2}\pi}\iint_{\R^2} \exp \left[-Q(y_1,y_2) \right] \exp \left[ - \e \gamma_\e (y_1+y_2) + 2 \D(V)(y_1,y_2,0)   \right] \Big( \D(\p_z V)(y_1,y_2,0) \Big)  d y_1 d y_2 \\
& 
= 
- \dfrac{1}{\e^2 \sqrt{2}\pi}\iint_{\R^2} \exp \left[-Q(y_1,y_2) \right]   \left[1 - \e \gamma_\e (y_1+y_2)  + o\left (\e \right ) \right] \nonumber 
\\ 
&\qquad\qquad\qquad \qquad\qquad\qquad  
\qquad\;\; \times \left ( \frac\e2 \left ( y_1  + y_2 \right ) \p_z^2 V(0) + \dfrac{\e^2}4 \left (y_1^2 + y_2^2\right ) \p_z^3 V(0) + o(\e^2) \right )  d y_1 d y_2 
\\ 
& =\frac1{\e^2} \left(  \dfrac{\e^2}{2}  \gamma_\e \p_z^2 V(0) - \e^2   \dfrac{3 \p_z^3 V(0)}{8}  +  o(\e^2) \right) .
\end{align*}
Then solving 
\begin{align*}
0 =   -    \dfrac{3 \p_z^3 V(0)}{8}+ \dfrac{1}{2}  \gamma_\e \p_z^2 V(0) + o(1)   ,
\end{align*}
we get the expression :
\begin{equation}\label{eq:gammaepsd=1}
\gamma_\e \underset{\e \to 0}{\sim} \dfrac{3}{4}\dfrac{\p_z^3V(0)}{\p_z^2 V(0)}.
\end{equation}
These heuristics are consistent with the statement in Theorem \ref{convergence PUeps}, up to the relation between $V_0$ and $m$ that can be easily read out from \eqref{eq:gamma0value}. Note that the denominator involves $\p_z^2 V(0)$, so that the local convexity of $V$ should be controlled uniformly during our construction. This is the purpose of the restriction in $\Eaz$ \eqref{eq:E0}. In the following, we provide estimates that turn these heuristics into a rigorous proof.

\subsection{Proof of \cref{existunique gammaeps}}\label{sec:proof of prop24}
The aim of this section is to prove the existence and uniqueness of  $\gamma_\epsilon(V)$ stated in  \cref{existunique gammaeps}. We first start with a Lemma providing some useful estimates on the function $\Je$. Combining these estimates with a continuity and monotonicity arguments, we will be able to prove the \cref{existunique gammaeps}.

\begin{lem}[Estimates of $\Je$]\label{estimate Jeps}$ $\\
For any ball $K \subset \Eaz$, there exists $\e_K>0$, such that for all $\e \leq \e_K$ and  $V \in K$, the following estimate holds true for all $g$ in the interval $(-\RK, \RK)$:
\begin{align}
\label{eq:estim_Jeps}  
\Je(0,V) & = - \dfrac{1}{4 \sqrt{2}\pi} \iint _{\R^2} \exp(-Q (y_1,y_2) ) \left[ y_1^2 \p_z^3 V(\e \tilde{y_1}) +   y_2^2 \p_z^3 V(\e \tilde{y_2}))  \right] dy_1dy_2  +    O(\e) , \\
\label{eq:estim_Jeps'}  \p_g \Je(g,V)& =\dfrac{\p^2_z V(0)}{2} +  O(\e),
\end{align}
where, in the former expansion, the variable $\tilde{y_i}$ is a by-product of Taylor expansions and is such that $\abs{\tilde{y_i}} \leq \abs{ y_i}+1$.
\end{lem}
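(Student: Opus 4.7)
The plan is to derive both estimates by a second-order Taylor expansion of $V$ and $\p_z V$ at the origin, exploiting the two cancellations built into the space $\Eaz$: namely $V(0) = 0$ (from $\Ea$) and $\p_z V(0) = 0$ (the defining constraint of $\Eaz$).

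First I would simplify the finite differences at $z = 0$. Since $V(0) = \p_z V(0) = 0$, one has $\D(\p_z V)(y_1,y_2,0) = -\tfrac12 \p_z V(\e y_1) -\tfrac12 \p_z V(\e y_2)$ and $2\D(V)(y_1,y_2,0) = -V(\e y_1) - V(\e y_2)$. A Lagrange-form Taylor expansion then gives
\begin{equation*}
\D(\p_z V)(y_1,y_2,0) = -\frac{\e}{2}(y_1 + y_2)\, \p_z^2 V(0) - \frac{\e^2}{4}\bigl(y_1^2 \p_z^3 V(\e \tilde y_1) + y_2^2 \p_z^3 V(\e \tilde y_2)\bigr),
\end{equation*}
with $|\tilde y_i| \le |y_i|$, and likewise $2\D(V)(y_1,y_2,0) = O\bigl(\e^2\, \nalph{V}\, (y_1^2 + y_2^2)\bigr)$, using the uniform bound $|\p_z^2 V| \le \nalph{V}$.

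To establish \eqref{eq:estim_Jeps}, I would plug this decomposition into \eqref{eq:def_Jeps} with $g = 0$ and split the integrand into a \emph{linear} piece (carrying the prefactor $\e (y_1+y_2) \p_z^2V(0)$) and a \emph{quadratic} piece involving $y_i^2 \p_z^3 V(\e \tilde y_i)$. The quadratic piece, naturally of size $\e^2$, matches the $1/\e^2$ prefactor and produces the claimed leading term after replacing $\exp[2\D(V)]$ by $1$; this replacement costs $O(\e)$ thanks to $|\exp[2\D(V)] - 1| \lesssim \e^2(y_1^2+y_2^2)$ and the integrability of polynomial moments against $e^{-Q}$. The linear piece, of size $\e$ against the $1/\e^2$ prefactor, vanishes at leading order by the oddness of $(y_1+y_2)\,e^{-Q(y_1,y_2)}$; the correction coming from $\exp[2\D(V)] - 1$ contributes $O(\e^2)$ to the integral, giving overall $O(\e)$ after the $1/\e$ factor.

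For \eqref{eq:estim_Jeps'}, differentiation under the integral sign brings down a factor $-\e(y_1 + y_2)$. Multiplying by the linear part of $\D(\p_z V)$ produces the principal contribution $\frac{\p_z^2 V(0)}{2\sqrt{2}\pi} \iint e^{-Q}(y_1+y_2)^2\, dy_1 dy_2$ in the limit $\e \to 0$. A direct moment computation from the covariance matrix $\Sigma$ in \eqref{eq:covariance} gives $\E[(Y_1+Y_2)^2] = \tfrac34 + \tfrac34 - \tfrac12 = 1$, hence $\iint e^{-Q}(y_1+y_2)^2\, dy_1 dy_2 = \sqrt 2 \pi$, which yields the announced value $\p_z^2 V(0)/2$. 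The residual contributions — from the $O(\e^2)$ quadratic part of $\D(\p_z V)$, and from replacing $\exp[-\e g(y_1+y_2) + 2\D(V)]$ by $1$ — are each $O(\e)$ by the same Taylor arguments.

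The main obstacle is the uniformity of all $O(\e)$ remainders with respect to $V \in K$ and $g \in (-\RK, \RK)$ over the unbounded domain $\R^2$. Since $\exp[-\e g(y_1+y_2) + 2\D(V)]$ can grow in $(y_1,y_2)$, the choice of $\e_K$ must be small enough (depending on $\nalph{K}$ and $\RK$) that this factor is dominated by $\exp\bigl[C(\nalph{K})\,\e^2(y_1^2+y_2^2) + \RK \e |y_1+y_2|\bigr]$ and absorbed into $e^{-Q(y_1,y_2)}$ while preserving finite Gaussian moments of every order. This absorption yields the required uniform control of all the Taylor remainders and completes the proof of \eqref{eq:estim_Jeps}--\eqref{eq:estim_Jeps'}.
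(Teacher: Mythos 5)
The proposal is correct and follows essentially the same approach as the paper's own proof: Taylor-expand the finite differences $\D(V)$ and $\D(\p_z V)$ at $z=0$ using the cancellations $V(0)=\p_z V(0)=0$, split the integrand into a linear-in-$\e$ piece (which vanishes at leading order by the symmetry of $Q$) and a quadratic piece (which yields the stated leading term), control the replacement of the exponential factor by $1$ via the $\e^2$-smallness of $2\D(V)$, and absorb everything uniformly against Gaussian moments for $\e$ small enough depending on $\nalph{K}$ and $R_K$. The explicit Gaussian moment computation $\iint e^{-Q}(y_1+y_2)^2\,dy_1dy_2=\sqrt{2}\pi$ matches the paper's conclusion $\p_g\Je(g,V)=\p_z^2V(0)/2+O(\e)$.
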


\begin{remark}$ $\\
We prove the uniqueness of $\gamma_\e$ on a  uniformly bounded interval. One may think it is a strong restriction not to look at large $\gamma_\e$. It is in fact a natural restriction as we have by definition   $\gamma_\e = \p_zU_\e(0)$, and $\p_z U_\e \in L^\infty$ in our perturbative setting. 
\end{remark}
We postpone the proof of the technical \cref{estimate Jeps} at the end of this section and we first use it to prove the \cref{existunique gammaeps}: \begin{proof}[Proof of \cref{existunique gammaeps}] 
Let $K$ be a ball of $\Eaz$ and $V \in K$. We deduce from \cref{estimate Jeps} that $|\Je(0,V)| \leq \GK + 1$, where 
\begin{equation*}
\GK = \dfrac{\nalph{K}}{4 \sqrt{2}\pi } \ \iint _{\R^2} \exp(-Q (y_1,y_2) ) \left ( y_1^2  +   y_2^2  \right ) dy_1dy_2\,,
\end{equation*}
for $\e$ small enough. Integrating \eqref{eq:estim_Jeps'} with respect to $g$, we obtain 
\begin{equation*}
\Je(g,V) = \Je(0,V) + \dfrac{\p^2_z V(0)}{2} g + O(\e)\, , 
\end{equation*}
where it is important to notice that the perturbation $ O(\e)$ is uniform with respect to $\e$ for $g\in (-\RK,\RK)$ and $V\in K$.   
Since $V\in \Eaz$, we know that $\partial_z^2V(0)\geq \partial_z^2m(0)>0$. Therefore, $\Je$ is uniformly increasing with respect to $g$ on $(-\RK,\RK)$. Moreover, the choice of $\RK$ is such that 
\begin{equation*}
\Je(\RK,V) \geq - 1 - \GK + \dfrac{\p^2_z m(0)}{2} \RK +  O(\e) = 1 +  O(\e) >0\,,   
\end{equation*}
for $\e$ small enough, and similarly, $\Je(-\RK,V) < 0$. Finally, there exists a unique  $\gamma_\e(V)$ satisfying $\Je(\gamma_\e(V),V)= 0$ because $\Je$ is continuous with repect to $g$ for $V\in \Eaz$. 
\end{proof}

\begin{proof}[Proof of \cref{estimate Jeps}]
Let $K$ be a ball of $\Eaz$ of radius $\nalph{K}$. In \cref{sec heuristicsgammaeps}, we have used formal Taylor expansions to get a formula for $\gamma_\e(V)$, morally valid when $\e = 0$. The idea here is to write exact rests to broaden the formula for small but positive $\e$.\medskip

\noindent$\triangleright$ \textbf{Proof of expansion \eqref{eq:estim_Jeps}.}
Let us pick $V\in K$ and $\epsilon>0$. Recall the expression of $\Je(0,V)$ :
\begin{equation*}
\Je(0,V) = \dfrac{1}{\e^2 \sqrt{2}\pi } \ds  \iint_{\R^2} \exp \left[-Q(y_1,y_2)  + 2 \D(V)(y_1,y_2,0)     \right] \Big( \D(\p_z V)(y_1,y_2,0) \Big)  d y_1 d y_2 .
\end{equation*}
We perform two Taylor expansions, namely: 
\begin{equation}\label{eq:Taylor1}
\begin{cases}
2 \D(V)(y_1,y_2,0)   = - \dfrac{\e^2}{2}\left( y_1^2 \p_z^2 V(\e \tilde{y_1}) + y_2^2 \p_z^2 V(\e \tilde{y_2}) \right)\medskip\\
\ds \D(\p_z V)(y_1,y_2,0)  = - \dfrac{\e (y_1+y_2)}{2} \p_z^2 V(0)- \dfrac{\e^2}{4} (y_1^2 \p_z^3 V(\e \tilde{y_1}) +   y_2^2 \p_z^3 V(\e \tilde{y_2})),
\end{cases}
\end{equation}
where $\tilde{y_i}$ denote some generic number such that $\abs{\tilde{y_i}} \leq \abs{y_i}$ for $i=1,2$. 
Moreover, we can write
\begin{equation}\label{eq:P}
\exp(-\e^2 P) = 1 -\e^2 P \exp(-\theta\e^2 P)\, , \quad P = \dfrac{1}{2}\left( y_1^2 \p_z^2 V(\e \tilde{y_1}) + y_2^2 \p_z^2 V(\e \tilde{y_2}) \right)\,,\quad |P|\leq \frac12 \left (y_1^2 + y_2^2\right ) \|V\|_\alpha\, ,
\end{equation}
for some $\theta = \theta(y_1,y_2)\in (0,1)$. Combining the expansions, we find:
\begin{multline*}
\Je(0,V) = \dfrac{1}{\e^2 \sqrt{2}\pi} \ds  \iint_{\R^2} \exp \left[-Q(y_1,y_2)\right ] \left ( 1  -\e^2 P \exp(-\theta\e^2 P)  \right ) \\  \times\left ( - \dfrac{\e (y_1+y_2)}{2} \p_z^2 V(0)- \dfrac{\e^2}{4} (y_1^2 \p_z^3 V(\e \tilde{y_1}) +   y_2^2 \p_z^3 V(\e \tilde{y_2}))  \right )  d y_1 d y_2 \, .
\end{multline*}
The crucial point is the cancellation of the $O(\e^{-1})$ contribution due to the symmetry of $Q$:
\begin{equation}\label{eq:cancellation}
   \iint_{\R^2} \exp (-Q(y_1,y_2))  (y_1 + y_2) d y_1 d y_2 = 0\, .
\end{equation}
So, it remains 
\begin{multline*}
\Je(0,V) = - \dfrac{1}{4 \sqrt{2}\pi} \iint _{\R^2} \exp(-Q (y_1,y_2) ) \left[ y_1^2 \p_z^3 V(\e \tilde{y_1}) +   y_2^2 \p_z^3 V(\e \tilde{y_2}))  \right] dy_1dy_2 \\ + \frac\e{2 \sqrt{2}\pi} \iint _{\R^2} \exp(-Q (y_1,y_2) ) P \exp(-\theta \e^2 P)  (y_1+y_2)  \p_z^2 V(0)    dy_1dy_2 \\
+ \frac{\e^2}{4\sqrt{2}\pi}  \iint _{\R^2} \exp(-Q (y_1,y_2) ) P \exp(-\theta \e^2 P) \left ( y_1^2 \p_z^3 V(\e \tilde{y_1}) +   y_2^2 \p_z^3 V(\e \tilde{y_2}) \right )         dy_1dy_2
\end{multline*}
Clearly the last two contributions are uniform $O(\e)$ for $V\in K$ and $\e \leq \e_K$ small enough. Indeed, the term $P$ is at most quadratic with respect to $y_i$ \eqref{eq:P}, so $Q + \theta\e^2 P$ is uniformly bounded below by a positive quadratic form for $\e$ small enough.    
\medskip

\noindent$\triangleright$ \textbf{Proof of expansion \eqref{eq:estim_Jeps'}.} 
The first step is to compute the derivative of $J$ with respect to $g$:
\begin{multline*}
\p_g \Je(g,V) = -\frac{1}{\e \sqrt{2}\pi} \ \iint_{\R^2} \exp\left[-Q(y_1,y_2) - \e g (y_1+y_2) + 2 \D(V)(y_1,y_2,0)    \right]\\ 
\times (y_1+y_2) \left[\D(\p_z V)(y_1,y_2,0) \right]  d y_1 d y_2.
\end{multline*}
Similar Taylor expansions as above yields:
\begin{multline*}
\p_g \Je(g,V) = -\frac{1}{\e \sqrt{2}\pi}  \iint_{\R^2} \exp\left[-Q(y_1,y_2)\right ] \left ( 1 - \e P' \exp(-\theta\e P') \right )  \\ \times 
(y_1+y_2) \left(- \dfrac{\e (y_1+y_2)}{2} \p_z^2 V(0)- \dfrac{\e^2}{4} (y_1^2 \p_z^3 V(\e \tilde{y_1}) +   y_2^2 \p_z^3 V(\e \tilde{y_2})) \right)  d y_1 d y_2,
\end{multline*}
where $P' = g(y_1 + y_2) +   y_1 \p_z  V(\e \tilde{y_1}) + y_2  \p_z  V(\e \tilde{y_2})$. Interestingly, the leading order term does not cancel anymore, and it remains:
\begin{equation*}
\p_g \Je(g,V) = \frac{1}{2 \sqrt{2}\pi} \left (   \iint_{\R^2}  \exp\left[-Q(y_1,y_2)\right ](y_1+y_2)^2 \, dy_1dy_2\right )   \p_z^2 V(0) +  O(\e)\, .
\end{equation*}
The justification that the remainder is a uniform $ O(\e)$ is similar as above, except that now $P'$ has a linear part depending on $g$, but the latter is assumed to be bounded a priory by $\RK$.
\end{proof} 

\section{Analysis of the perturbative term $\I_\e$}\label{sec prop Ieps}
\subsection{Lispchitz continuity of some auxiliary functionals}\label{sec reg prop Ieps}\label{sec regprop Ieps}
The function $\I_\e$ is crucially involved in the definition of the mapping $\H$. Thus to prove any contraction property on this mapping we will need Lipschitz estimates about  $\I_\e$ and the three first derivatives of its logarithm. But first we show that  $\I_\e$ really plays the role of a perturbative term between    problem \eqref{eq:PUeps} and problem \eqref{eq:PU0}  that converges to $1$ uniformly as $\epsilon\to0$.

\begin{prop}[Estimation of $\I_\e$]\label{estim Ieps}$ $
\\
For every $K$ ball of $\Eaz$, for every $\delta>0$, there exists a constant $\e_\delta$ that depends only on $K$ and $\delta$, such that for every $\e \leq \e_\delta$  and for every $V \in K$ :
\begin{align*}
(\forall z\in \R)\quad  1-\delta \leq \I_\e(V)(z) \leq 1+\delta\,.
\end{align*}
\end{prop}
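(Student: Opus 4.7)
The plan is to show that both the numerator and denominator of $\I_\e(V)(z)$ in \eqref{eq:def_Ieps} converge to $\sqrt{2}\pi$ at a rate $O_K(\e)$, uniformly in $z\in\R$ and $V\in K$. Since the limit is nonzero, this yields $\I_\e(V)(z)\to 1$ uniformly, which gives the bound $|\I_\e(V)(z)-1|\leq\delta$ for $\e\leq\e_\delta$ small enough.

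Denote the perturbation in the exponent of the numerator by
\begin{equation*}
\Delta_\e(y_1,y_2,z,V) := -\e\gamma_\e(V)(y_1+y_2) + 2\D(V)(y_1,y_2,z).
\end{equation*}
First I would derive the uniform pointwise bound $|\Delta_\e(y_1,y_2,z,V)| \leq C_K\,\e\,(1+y_1^2+y_2^2)$ valid for all $z\in\R$ and $V\in K$. This follows from combining (i) the control $|\gamma_\e(V)|\leq \RK$ provided by \cref{existunique gammaeps}; (ii) a second-order Taylor expansion of $\D(V)$ around $\bz$, which produces remainders controlled by the uniform $L^\infty$ bounds $\|DV\|_\infty,\|D^2V\|_\infty\leq \nalph{K}$ inherent to the norm on $\Eaz$. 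These unweighted $L^\infty$ bounds, rather than the $\alpha$-weighted ones, are precisely what make the estimate independent of $z$, which is the crux of uniformity in $z$.

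For $\e$ small enough (depending only on $K$), the bound above yields $-Q+\Delta_\e \leq -Q/2 + C_K$, so the integrand is dominated by a fixed integrable function. More quantitatively, using the elementary inequality $|e^x-1|\leq |x|e^{|x|}$,
\begin{equation*}
\left| \iint_{\R^2} e^{-Q+\Delta_\e}\,dy_1 dy_2 - \sqrt{2}\pi\right| \leq \iint_{\R^2} e^{-Q}|\Delta_\e|e^{|\Delta_\e|}\,dy_1 dy_2 \leq C_K\,\e\iint_{\R^2} e^{-Q/2}(1+y_1^2+y_2^2)\,dy_1dy_2,
\end{equation*}
which is $O_K(\e)$ uniformly in $(z,V)\in\R\times K$. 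The same strategy applies to the denominator, which depends only on $V$: the perturbation $-\e\gamma_\e(V)y + V(0) - V(\e y)$ is $O_K(\e)(1+y^2)$ by the same reasoning (using $\|DV\|_\infty\leq \nalph{K}$), so $\sqrt{\pi}\int_\R \exp(-y^2/2-\e\gamma_\e y + V(0)-V(\e y))\,dy = \sqrt{2}\pi + O_K(\e)$, uniformly in $V\in K$.

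The denominator is then bounded away from zero uniformly (say by $\sqrt{2}\pi/2$) for $\e$ small, so the ratio $\I_\e(V)(z)$ differs from $1$ by at most $O_K(\e)$ uniformly in $(z,V)$. Choosing $\e_\delta$ so that this bound is below $\delta$ concludes the proof. The main (technical rather than conceptual) obstacle is exactly the uniformity in $z\in\R$: $V$ is evaluated at arguments $\bz$ and $\bz+\e y_i$ which may be arbitrarily large, so the $\alpha$-weighted bounds are not directly useful; one must work with the unweighted $L^\infty$ control of $DV, D^2V$ provided by the norm on $\Eaz$ and absorb the resulting polynomial growth into the Gaussian weight $e^{-Q/2}$.
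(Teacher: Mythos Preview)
Your proof is correct and follows essentially the same route as the paper: bound $|\gamma_\e(V)|\leq R_K$ via \cref{existunique gammaeps}, control the perturbation in the exponent uniformly in $z$ using the unweighted bound $\|DV\|_\infty\leq\nalph{K}$, and absorb the polynomial factors into the Gaussian weight. The paper presents this as an explicit sandwich $\underline C_\e\leq\I_\e(V)(z)\leq\overline C_\e$ with $\underline C_\e,\overline C_\e\to 1$, whereas you quantify the rate via $|e^x-1|\leq|x|e^{|x|}$; this is a cosmetic difference. Note also that a first-order (Lipschitz) bound on $\D(V)$ already suffices---the second-order expansion you invoke is not needed here.
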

\begin{proof}
Let  $V$ in $K$. For $\e \leq \e_K$, one can apply \cref{existunique gammaeps} which gives $\abs{\gamma_\e(V) } \leq \RK$. Next it is enough to write that :
\begin{align*}
& \underline{C}_\e := \dfrac{\ds  \iint_{\R^2} \exp\left[-Q(y_1,y_2) - 2 \e R_K (\abs{y_1}+\abs{y_2})  \right]d y_1 d y_2  }{\ds\sqrt{2 } \pi \int_\R   \exp\left( - y^2/2 - 2 \e R_K \abs{y}   \right) d y}  \leq  \I_\e(V)(z), \text{ and }  \\
& \I_\e(V)(z) \leq \dfrac{\ds  \iint_{\R^2} \exp\big(-Q(y_1,y_2) + 2 \e R_K(\abs{y_1}+\abs{y_2}) 	  \big)d y_1 d y_2  }{\ds\sqrt{2 } \pi \int_\R   \exp\left( - y^2/2 - 2 \e R_K \abs{y}  \right) d y} := \overline{C}_\e.
 \end{align*}
We deduce from this lower and upper estimates that the whole $\I_\e(V)$  converges uniformly to $1$ as $\e \to 0$.  
\end{proof}

Next, we show  Lipschitz continuity of various quantities of interest. 

\begin{prop}[Lipschitz continuity of $\gamma_\e$]\label{contraction gamma}$ $
\\
For every ball $K\subset \Eaz$, there exist constants $\Cgamma$, and $\e_K $, depending only on $K$, such that for all $\e \leq \e_K$, $V_1, V_2 \in K$
\begin{equation*}
\abs{\gamma_\e(V_1)-\gamma_\e(V_2)} \leq \Cgamma \nalph{V_1-V_2}.
\end{equation*}
\end{prop}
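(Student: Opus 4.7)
The plan is to exploit the defining relations $\Je(\gamma_\e(V_i),V_i)=0$ for $i=1,2$ and the lower bound on $\partial_g\Je$ obtained in \cref{estimate Jeps}, together with a quantitative Lipschitz estimate on $V\mapsto \Je(g,V)$ that is uniform in $g\in(-\RK,\RK)$ and in $\e\leq \e_K$. Fix $V_1,V_2\in K$, set $\gamma_i:=\gamma_\e(V_i)$ and $W:=V_1-V_2$; observe that $W(0)=0$ and $\partial_z W(0)=0$ because both $V_i$ belong to $\Eaz$, and $\nalph{W}\leq 2\nalph{K}$.

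First I would write the standard decomposition
\begin{equation*}
0 \; = \; \Je(\gamma_1,V_1)-\Je(\gamma_2,V_2) \; = \; \underbrace{\bigl[\Je(\gamma_1,V_1)-\Je(\gamma_2,V_1)\bigr]}_{A} \; + \; \underbrace{\bigl[\Je(\gamma_2,V_1)-\Je(\gamma_2,V_2)\bigr]}_{B}\, .
\end{equation*}
For $A$, the mean value theorem in the first variable together with the estimate \eqref{eq:estim_Jeps'} gives $|A|\geq(\mu_0/4)\,|\gamma_1-\gamma_2|$, provided $\e$ is small enough that the $O(\e)$ remainder is absorbed into $\partial_z^2 V_1(0)\geq \mu_0$. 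It then suffices to control $|B|$ by $C_K\,\nalph{W}$ and conclude with $\Cgamma=4C_K/\mu_0$.

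To estimate $B$, I would write it as a single integral and split
\begin{equation*}
e^{2\D(V_1)}\D(\p_zV_1)-e^{2\D(V_2)}\D(\p_zV_2) \; = \; e^{2\D(V_1)}\,\D(\p_zW) \; + \; \D(\p_zV_2)\,\bigl[e^{2\D(V_1)}-e^{2\D(V_2)}\bigr]\, .
\end{equation*}
The second piece is the benign one: bounds $|\D(V_i)|\leq\tfrac{\e^2}{4}(y_1^2+y_2^2)\nalph{K}$ and $|\D(\p_zV_2)|\leq\tfrac{\e}{2}(|y_1|+|y_2|)\nalph{K}$ (both obtained from Taylor at $0$ using $V_i(0)=\p_zV_i(0)=0$), combined with $|e^a-e^b|\leq |a-b|(e^a+e^b)$ applied to $|\D(W)|\leq\tfrac{\e^2}{4}(y_1^2+y_2^2)\nalph{W}$, give a contribution of order $\e\,\nalph{W}$ once multiplied by the $1/\e^2$ prefactor; the remaining Gaussian integral is finite because for $\e\leq \e_K$ the exponent $-Q-\e g(y_1+y_2)+2\D(V_i)$ is dominated by a non-degenerate negative quadratic form.

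The delicate piece is the first one: Taylor expansion yields $\D(\p_zW)(y_1,y_2,0)=-\tfrac{\e}{2}(y_1+y_2)\p_z^2W(0) - \tfrac{\e^2}{4}(y_1^2\p_z^3W(\e\tilde y_1)+y_2^2\p_z^3W(\e\tilde y_2))$, and the naive triangle inequality produces a divergent $O(1/\e)\nalph{W}$ contribution from the $\e(y_1+y_2)$ term after division by $\e^2$. The key point is therefore to exploit the cancellation \eqref{eq:cancellation}: expand $e^{-\e g(y_1+y_2)+2\D(V_1)}=1+r_\e(y_1,y_2)$ with $|r_\e|\leq C\e(|y_1|+|y_2|)+C\e^2(y_1^2+y_2^2)$ uniformly for $g\in(-\RK,\RK)$ and $V_1\in K$; the ``$1$'' contribution integrates $(y_1+y_2)e^{-Q}$ to zero by symmetry, so only the remainder $r_\e$ survives and produces an extra $\e$ factor. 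The resulting bound is $O(1)\,|\p_z^2W(0)|+O(\e)\nalph{W}\leq C_K\,\nalph{W}$ since $|\p_z^2W(0)|\leq\nalph{W}$. Combining with the quadratic remainder (which is already $O(1)\nalph{W}$ after the $1/\e^2$ factor) closes the argument. The main obstacle is precisely this cancellation: one must carefully identify that the only potentially singular term as $\e\to 0$ is odd in $(y_1,y_2)$ against the even weight $e^{-Q}$, and track the gain of one power of $\e$ in the remainder of the exponential expansion. Plugging these estimates into $|A|=|B|$ finishes the proof with $\Cgamma=4C_K/\mu_0$.
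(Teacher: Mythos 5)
Your proof is correct and follows essentially the same strategy as the paper's: both hinge on the uniform lower bound for $\p_g\Je$ from \cref{estimate Jeps}, on Taylor-expanding $\D(\p_z W)$ to isolate the $O(\e^{-1})$ term, and crucially on killing that term with the cancellation \eqref{eq:cancellation} while the remainder of the exponential contributes the extra power of $\e$. The only cosmetic difference is that you split $\Je(\gamma_1,V_1)-\Je(\gamma_2,V_2)$ into the two discrete increments $A$ and $B$, whereas the paper integrates $\frac{d}{ds}\Je(\gamma_s,V_s)$ along the segment $(\gamma_s,V_s)$; both are standard quantitative implicit-function arguments and lead to the same estimate.
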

\begin{proof} 
Let $K$ be a ball of $\Eaz$, and let $V_1,V_2 \in K$. Let denote $\GammaI = \gamma_\e(V_i)$ for $i = 1,2$.
We argue by means of Fréchet derivatives: let $s\in (0,1)$, $\gamma_s = s \gamma_1 + (1-s)\gamma_2$, $V_s =  s V_1 + (1-s)V_2$, and consider the following computation:
\begin{equation}\label{eq:dsJ}
\dfrac{d}{ds} \Je(\gamma_s , V_s) = \p_\gamma \Je(\gamma_s,V_s) (\gamma_1 - \gamma_2) + D_V \Je (\gamma_s,V_s)\cdot  (V_1 - V_2)\,,
\end{equation}
where the Fréchet derivative of $\Je$ with respect to $V$ is:
\begin{multline*}
D_V \Je(\gamma,V)\cdot H  =   \dfrac{1}{\e^2\sqrt{2}\pi}    \iint_{\R^2} \exp \left[-Q(y_1,y_2)- \e \gamma (y_1+y_2) + 2 \D(V)(y_1,y_2,0)     \right] \\ 
\qquad\qquad\qquad\qquad\qquad\qquad\qquad \times \Big (   2 \D(H)(y_1,y_2,0)  \Big ) \Big( \D(\p_z V)(y_1,y_2,0) \Big)  d y_1 d y_2  \\ + 
 \dfrac{1}{\e^2 \sqrt{2}\pi}    \iint_{\R^2} \exp \left[-Q(y_1,y_2)- \e \gamma (y_1+y_2) + 2 \D(V)(y_1,y_2,0)     \right]   \Big( \D(\p_z H)(y_1,y_2,0) \Big)  d y_1 d y_2
\end{multline*}
We perform similar Taylor expansions as in \eqref{eq:Taylor1},
\begin{equation*}
 2 \D(W)(y_1,y_2,0)  = 
\begin{cases}
-\e  (y_1 + y_2) O\left (\|\p_z W\|_\infty \right )\\
-\e (y_1 + y_2)\p_z W(0) - (\e^2/2)\left ( y_1^2 + y_2^2 \right ) O\left (\|\p_z^2 W\|_\infty \right )
\end{cases}
\end{equation*} 
either for $W =V, H\in \Eaz$, or $W = \p_z V, \p_z H$. We deduce that 
\begin{multline}\label{eq:DVJ}
D_V \Je(\gamma,V)\cdot H  = \dfrac{1}{\e^2 \sqrt{2}\pi}    \iint_{\R^2} \exp \left[-Q(y_1,y_2)- \e \gamma (y_1+y_2) -\e  (y_1 + y_2) O\left (\|\p_z V\|_\infty \right )   \right] \\ \times 
\bigg[ \left (  - \frac{\e^2}2\left ( y_1^2 + y_2^2 \right ) O\left (\|\p_z^2 H\|_\infty \right )\right ) \Big( -\e  (y_1 + y_2) O\left (\|\p_z^2 V\|_\infty \right ) \Big)    \\ + 
   \Big(  -\e  (y_1 + y_2)\p_z^2 H(0) - \frac{\e^2}2\left ( y_1^2 + y_2^2 \right ) O\left (\|\p_z^3H\|_\infty \right )    \Big) \bigg]  d y_1 d y_2
\end{multline}
We proceed as in the previous section for the exponential term: there exists $\theta = \theta(y_1,y_2)\in (0,1)$ such that 
\begin{equation*}
\exp (- \e P' ) = 1 - \e P' \exp(-\theta \e P' )\, , \quad \text{where}\quad P' = \gamma (y_1+y_2) +  (y_1 + y_2) O\left (\|\p_z V\|_\infty \right )     
\end{equation*}
Again, the crucial point is  the cancellation of the $O(\e^{-1})$ contribution in \eqref{eq:DVJ}, as in \eqref{eq:cancellation}.
What remains is of  order one or below, and one can easily show that there exists $C_K$ such that 
\begin{align*}
\abs{D_V \Je(\gamma,V)\cdot H}&  \leq C_K \left (  \|\p_z^3H\|_\infty + |\p_z^2 H(0)| \left ( |\gamma + \|\p_z V\|_\infty | \right ) \right.\\ 
& \quad + \left. \e \|\p_z^2H\|_\infty \|\p_z^2V\|_\infty + \e \|\p_z^3H\|_\infty \left ( |\gamma + \|\p_z V\|_\infty | \right )   \right )\\
& \leq C_K \|H\|_\alpha \, , 
\end{align*}
provided $\e \leq \e_K$ is small enough.

On the other hand, we have already established that $\p_\gamma \Je(\gamma,V) = \p_z^2 V(0)/2 +  O(\e)$ in Lemma \ref{estimate Jeps}. Consequently, integrating \eqref{eq:dsJ} from $s = 0$ to $1$, we find:
\begin{equation*}
0 = \Je(\gamma_1,V_1) - \Je(\gamma_2,V_2) = \left ( \frac{\p_z^2 V(0)}2 +  O(\e) \right ) (\gamma_1 - \gamma_2) + \left ( \int_0^1 D_V \Je (\gamma_s,V_s)\cdot  (V_1 - V_2)\, ds\right )\, .
\end{equation*}
We deduce from the previous estimates and the local convexity condition in \eqref{eq:E0} that 
\begin{equation*}
|\gamma_1 - \gamma_2| \leq C_K \left ( \dfrac{2 }{\p_z^2m(0)} + C_K\e \right ) \|V_1 - V_2\|_\alpha \, ,  
\end{equation*}
for some $C_K$ and $\e \leq \e_K$ small enough.
\end{proof} 

In turn, \cref{contraction gamma} implies the Lipschitz continuity of $\I_\e$ as a function of $V$.

\begin{prop}[Lipschitz continuity of $\I_\e$]\label{cont Ieps}$ $\\
For every ball $K$ of $\Eaz$, there exist constants $\e_K$, $C_K$ depending only on $K$,  such that for all $\e \leq \e_K $, $V_1, V_2 \in K$, 
\begin{equation}\label{eq:cont_Ieps}
\sup_{z\in \R}\abs{\I_\e(V_1)(z)-\I_\e(V_2)(z)} \leq   \e C_K \nalph{ V_1 - V_2}.
\end{equation} 
\end{prop}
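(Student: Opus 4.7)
The strategy is analogous to that of \cref{contraction gamma}: I will separate the dependence of $\I_\e(V)$ on $V$ into its explicit dependence through the finite-difference operator $\D(V)$ (and its denominator counterpart $V(0) - V(\e y)$ in \eqref{eq:def_Ieps}) and its implicit dependence through $\gamma_\e(V)$. Denoting by $\tilde{\I}_\e(\gamma, V)(z)$ the quantity obtained from \eqref{eq:def_Ieps} upon replacing $\gamma_\e(V)$ by a free parameter $\gamma \in \R$, so that $\I_\e(V) = \tilde{\I}_\e(\gamma_\e(V), V)$, the decomposition
\[
\I_\e(V_1)(z) - \I_\e(V_2)(z) = \bigl[\tilde{\I}_\e(\gamma_1, V_1)(z) - \tilde{\I}_\e(\gamma_1, V_2)(z)\bigr] + \bigl[\tilde{\I}_\e(\gamma_1, V_2)(z) - \tilde{\I}_\e(\gamma_2, V_2)(z)\bigr],
\]
with $\gamma_i := \gamma_\e(V_i)$, reduces the task to bounding each bracket by $\e C_K \nalph{V_1 - V_2}$ uniformly in $z \in \R$.

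For the first bracket, the key point is that both $\D(V)$ and $V(0) - V(\e y)$ are finite differences of $V$ at scale $\e$, hence Lipschitz in $V$ with constant of order $\e$. A mean-value argument yields the uniform (in $z$) bounds
\[
|\D(V_1 - V_2)(y_1, y_2, z)| \leq \tfrac{\e}{2}(|y_1| + |y_2|)\,\nalph{V_1 - V_2}, \qquad |(V_1 - V_2)(0) - (V_1 - V_2)(\e y)| \leq \e |y|\,\nalph{V_1 - V_2}.
\]
Using the elementary identity $e^a - e^b = (a-b)\int_0^1 e^{sa + (1-s)b}\,ds$, and dominating the exponents by $|2\D(V_i)| \leq \e(|y_1|+|y_2|)\nalph{K}$ together with $|\e \gamma_1(y_1+y_2)| \leq \e\RK(|y_1|+|y_2|)$ (from \cref{existunique gammaeps}), I will choose $\e_K$ small enough so that the resulting integrand is dominated by a fixed Gaussian envelope with finite integral. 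This furnishes uniform bounds of order $\e\,\nalph{V_1 - V_2}$ on the differences of the numerator and of the denominator of $\tilde{\I}_\e(\gamma_1,V)(z)$. Combining them via the ratio identity $\tfrac{N_1}{D_1} - \tfrac{N_2}{D_2} = \tfrac{N_1 - N_2}{D_1} + \tfrac{N_2(D_2 - D_1)}{D_1 D_2}$, together with the uniform lower bound on the denominator and the uniform upper bound on $\tilde{\I}_\e$ (both obtained as in the proof of \cref{estim Ieps}), delivers the estimate on the first bracket.

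For the second bracket, only $\gamma$ varies. Since $\gamma$ enters the exponents only through the linear terms $-\e \gamma(y_1 + y_2)$ and $-\e \gamma y$, differentiating under the integral and using the same Gaussian domination gives $|\p_\gamma \tilde{\I}_\e(\gamma, V_2)(z)| \leq \e C_K$ uniformly for $z \in \R$, $V_2 \in K$, $\gamma \in (-\RK, \RK)$. The fundamental theorem of calculus from $\gamma_2$ to $\gamma_1$, combined with the Lipschitz estimate $|\gamma_1 - \gamma_2| \leq \Cgamma\,\nalph{V_1 - V_2}$ of \cref{contraction gamma}, then yields the bound $\e C_K\,\nalph{V_1 - V_2}$ on the second bracket.

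The only non-routine point I expect to have to track carefully is the uniformity in $z \in \R$ of the intermediate constants. This should work out because the $z$-dependence enters only through evaluations of $V$ and its first derivative at points of the form $z/2 + \e y$, which are controlled uniformly by the sup-norm component of $\nalph{\cdot}$; the denominator of \eqref{eq:def_Ieps} moreover does not involve $z$ at all. Putting the two brackets together yields the desired Lipschitz estimate \eqref{eq:cont_Ieps}.
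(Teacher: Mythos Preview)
Your proposal is correct and follows essentially the same approach as the paper. The only cosmetic difference is organizational: the paper interpolates along a single path $(\gamma_s,V_s)=(s\gamma_1+(1-s)\gamma_2,\,sV_1+(1-s)V_2)$ and differentiates $\frac{d}{ds}$ of the numerator $A_\e$ and denominator $B_\e$ of $\I_\e=A_\e/B_\e$ (writing $V(\bz+\e y_1)-V(\bz)=\int_{\bz}^{\bz+\e y_1}\p_zV$ to extract the $\e$), whereas you take an axis-parallel two-step path, first moving $V$ at fixed $\gamma_1$ and then moving $\gamma$ at fixed $V_2$. The ingredients---the $O(\e)$ scale of the finite differences, the Lipschitz bound on $\gamma_\e$ from \cref{contraction gamma}, the Gaussian domination, and the quotient identity---are identical.
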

\begin{proof}
The Lipschitz continuity of $\I_\e$ with respect to $V$ can be proven by composition of Lipschitz functions. With the same notations as in the proof of \cref{contraction gamma}, and with the shortcut notation $\I_\e = A_\e/B_\e$ to separate the numerator from the denominator in \eqref{eq:def_Ieps} we have, 
\begin{multline*}
\dfrac{d}{ds} A_\e(V_s)(z) = - \iint_{\R^2}  \QV(y_1,y_2, z) \left ( \e \dfrac{d}{ds} \gamma_\e (V_s) (y_1 + y_2) \right.\\ 
\left. + \dint_{\bz}^{\bz+\e y_1} \p_z (V_1-V_2)(z') dz' -\dint_{\bz}^{\bz+\e y_2}  \p_z (V_1-V_2)(z') dz'  \right ) \, dy_1 dy_2\,,
\end{multline*}
where we have simply written $V(\bz+\e y_1) - V(\bz)  = \int_{\bz}^{\bz+\e y_1} \p_z V(z')\, dz'$, and where $\QV$ denotes the exponential weight:
\begin{equation*}
\QV(y_1,y_2, z) =  \dfrac1{\sqrt{2}\pi} \ds \exp\left[-Q(y_1,y_2)  -\e\gamma_\e(V)(y_1+y_2) + 2 \D(V)(y_1,y_2,z)  \right] \, .
\end{equation*}
We deduce that $A_\e$ is such that:
\begin{equation*}
(\forall z)\quad \left | \dfrac{d}{ds} A_\e(V_s)(z)\right | \leq \e  \iint_{\R^2}  \QV(y_1,y_2, z) \left (    \Cgamma \| V_1 - V_2\|_\alpha +    \| V_1 - V_2\|_\alpha \right ) \left ( |y_1| + |y_2|\right )  \, dy_1 dy_2\,.
\end{equation*}
As the weight $\QV$ is uniformly close to a positive quadratic form for small $\e$, we find that the numerator has a Lipschitz constant of order $\e$ uniformly with respect to $z$: 
\begin{equation*}
\sup_{z\in \R}\abs{A_\e(V_1)(z)-A_\e(V_2)(z)} \leq \e C_K \nalph{ V_1 - V_2}\, .
\end{equation*}
The same holds true for the denominator $B_\e$. In addition, a direct by-product of the proof of \cref{estim Ieps} is that $A_\e$ and $B_\e$ are uniformly bounded above and below by positive constants for $\e$ small enough. Consequently, the quotient $\I_\e = A_\e/B_\e$ is Lipschitz continuous. 
\end{proof}

It is useful to introduce the probability measure $d\QV$ induced by the exponential weight $\QV$:
\begin{multline*}
 d\QV(y_1,y_2,z) = \dfrac{\QV(y_1,y_2,z)}{\ds \iint_{\R^2} \QV(\cdot, \cdot, z)} \\
 =  \dfrac{\ds \exp\left[-Q(y_1,y_2)  -\e\gamma_\e(V)(y_1+y_2) + 2 \D(V)(y_1,y_2,z)  \right] }                               {\ds\iint_{\R^2} \exp\left[-Q(y_1,y_2)-\e\gamma_\e(V)(y_1+y_2)  + 2 \D(V)(y_1,y_2,z)    \right]  d y_1 d y_2 }. 
\end{multline*}
As a consequence of the previous estimates, we obtain the following one:
\begin{lem}[Lipschitz continuity of $d\QV$]\label{contraction measure lem}$ $\\
For every ball $K$ of $\Eaz$, there exist constants $\e_K$, $C_K$ depending only on $K$,  such that for all $\e \leq \e_K $, $V_1, V_2 \in K$, 
\begin{multline}\label{eq:cont_measure}
\sup_{z\in \R}\abs{\dQVA(y_1,y_2,z)-\dQVB(y_1,y_2,z)}  \\
\leq   \e C_K \norm{V_1-V_2}_\alpha  ( 1+ \abs{y_1} + \abs{y_2})    \exp \Big(-Q(y_1,y_2) +    2 \e R_K (\abs{y_1}+\abs{y_2})    \Big).
\end{multline} 
Furthermore, under the same conditions, we have the following bound, uniform with respect to $z \in \R $: 
\begin{equation}\label{eq:control dQV}
d\QV(y_1,y_2,z) \leq \dfrac{1}{4}\exp\left[-Q(y_1,y_2) +  2 \e R_K (\abs{y_1}+\abs{y_2})   \right].
\end{equation}
\end{lem}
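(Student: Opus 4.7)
Both estimates rely on explicit control of the exponent defining $\QV$. Writing $V(\bar z + \e y_i) - V(\bar z) = \int_0^{\e y_i} \p_z V(\bar z + s)\, ds$, we obtain the uniform pointwise bound
\begin{equation*}
|2 \D(V)(y_1, y_2, z)| \leq \e \nalph{V}\, (|y_1| + |y_2|), \qquad \forall z \in \R.
\end{equation*}
Combined with $|\gamma_\e(V)| \leq \RK$ from \cref{existunique gammaeps} and the fact that $\nalph{V} \leq \RK$ by definition of $\RK$, this yields
\begin{equation*}
\QV(y_1, y_2, z) \leq \frac{1}{\sqrt{2}\pi}\exp\bigl[-Q(y_1, y_2) + 2\e \RK (|y_1| + |y_2|)\bigr],
\end{equation*}
together with the analogous lower bound where the sign of the linear correction is flipped. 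For $\e \leq \e_K$ small enough, the Gaussian part $\exp(-Q/2)$ absorbs the linear correction, so the normalizing integral $\iint \QV$ is uniformly bounded above and below by positive constants, independent of $z$ and of $V \in K$. Choosing $\e_K$ possibly smaller so that this lower bound exceeds $4/(\sqrt{2}\pi)$ delivers the pointwise control \eqref{eq:control dQV}.

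For \eqref{eq:cont_measure}, denote by $G^{V_i}_\e$ the unnormalized weight associated with $V_i$, so that $dG^{V_i}_\e = G^{V_i}_\e / \iint G^{V_i}_\e$. The proof rests on the standard algebraic decomposition
\begin{equation*}
\dQVA - \dQVB = \frac{G^{V_1}_\e - G^{V_2}_\e}{\iint G^{V_2}_\e} + \frac{G^{V_1}_\e}{\iint G^{V_1}_\e} \cdot \frac{\iint (G^{V_2}_\e - G^{V_1}_\e)}{\iint G^{V_2}_\e}.
\end{equation*}
By linearity of $\D$, the exponents of $G^{V_1}_\e$ and $G^{V_2}_\e$ differ by $-\e(\gamma_\e(V_1) - \gamma_\e(V_2))(y_1+y_2) + 2\D(V_1 - V_2)(y_1,y_2,z)$. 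Applying \cref{contraction gamma} to the first contribution and the integral representation above to $V_1 - V_2$ for the second, both terms are bounded by $C_K \e \nalph{V_1 - V_2}(|y_1| + |y_2|)$. The elementary inequality $|e^a - e^b| \leq |a - b| \max(e^a, e^b)$ then produces
\begin{equation*}
|G^{V_1}_\e(y_1,y_2,z) - G^{V_2}_\e(y_1,y_2,z)| \leq C_K \e \nalph{V_1 - V_2}(|y_1| + |y_2|) \exp\bigl[-Q(y_1,y_2) + 2\e \RK(|y_1| + |y_2|)\bigr],
\end{equation*}
uniformly in $z$. Integrating in $(y_1, y_2)$ absorbs the polynomial prefactor into the Gaussian (for $\e$ small enough) and yields $|\iint G^{V_1}_\e - \iint G^{V_2}_\e| \leq C_K \e \nalph{V_1 - V_2}$. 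Substituting these two bounds into the decomposition, combined with the uniform lower bound on the denominators from the first step, produces \eqref{eq:cont_measure}: the first summand contributes the $(|y_1| + |y_2|)$ prefactor, while the second contributes only a constant prefactor (since $G^{V_1}_\e/\iint G^{V_1}_\e = dG^{V_1}_\e$ carries only the Gaussian weight via \eqref{eq:control dQV}), which together account for the $(1 + |y_1| + |y_2|)$ factor.

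The only delicate point is ensuring that all constants are genuinely uniform in $z \in \R$. This is guaranteed by the integral representation of $\D(V)$ above, which involves only the first derivative $\p_z V \in L^\infty(\R)$ (bounded by $\nalph{V}$ without any weight in $z$), in contrast with higher derivatives of $V$ whose $L^\infty$ bounds in $\Ea$ degrade like $(1+|z|)^{-\alpha}$. Consequently, the analysis of $\QV$ and its integrals in $(y_1, y_2)$ is translation-invariant in $z$, and the threshold $\e_K$ can be chosen independently of $z$.
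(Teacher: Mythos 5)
Your proof is correct and essentially reproduces the paper's argument: the same pointwise upper bound on $\QV$ together with the uniform lower bound on its normalizing integral give \eqref{eq:control dQV}, and the same algebraic decomposition of $\dQVA-\dQVB$ into a numerator-difference term and a normalization-difference term gives \eqref{eq:cont_measure}. The only cosmetic difference is that you bound $|G^{V_1}_\e - G^{V_2}_\e|$ directly via $|e^a-e^b|\le |a-b|\max(e^a,e^b)$, while the paper differentiates along the interpolating path $V_s=sV_1+(1-s)V_2$ and integrates in $s$; these are equivalent.
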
 
\begin{proof} 
We first prove \eqref{eq:control dQV}: the function $\QV$  is such that 
\begin{equation*}
\QV(y_1,y_2,z) \gtrless \dfrac1{\sqrt{2}\pi} \exp\left[-Q(y_1,y_2) \mp    2 \e R_K (\abs{y_1}+\abs{y_2})  \right]. 
\end{equation*}
Therefore, its integral over $(y_1,y_2)\in \R^2$ converges to 1 as $\e\to 0$, and there exists $\e_K$ depending on $K$ such that $\iint \QV(y_1,y_2,z)\,dy_1 dy_2 \geq 4/{\sqrt{2}\pi}$ for $\e \leq \e_K$. This leads to \eqref{eq:control dQV}. 

In order to obtain \eqref{eq:cont_measure}, we proceed as in the proof of  \cref{cont Ieps}, as the denominator of $d\QV$ is the numerator $A_\e$ of $\I_\e$. For the Lipschitz continuity of the numerator of $d\QV$, we find:
\begin{equation*}
(\forall z)\quad \left | \dfrac{d}{ds} G_\e^{V_s}(y_1,y_2,z)\right | \leq \e   G_\e^{V_s} (y_1,y_2, z) \left (    \Cgamma \| V_1 - V_2\|_\alpha +    \| V_1 - V_2\|_\alpha \right ) \left ( |y_1| + |y_2|\right )   \,.
\end{equation*}
We deduce that the quotient $d\QV =\QV/{A_\e(V)}$ is also Lipschitz continuous: 
\begin{align*}
\abs{\dQVA - \dQVB} &\leq \abs{\dfrac{G_\e^{V_1} - G_\e^{V_2}}{A_\e(V_1)} + \dfrac{A_\e(V_2) - A_\e(V_1)}{A_\e(V_1)A_\e(V_2)} G_\e^{V_2}}\\
& \leq \e C_K\| V_1 - V_2\|_\alpha \left ( |y_1| + |y_2|\right ) \exp \Big(-Q(y_1,y_2) +    2 \e R_K(\abs{y_1}+\abs{y_2})  \Big)\\
& \quad + \e C_K \| V_1 - V_2\|_\alpha  \exp \Big(-Q(y_1,y_2) +    2 \e R_K (\abs{y_1}+\abs{y_2})  \Big)
\, . 
\end{align*}
This concludes the proof of \eqref{eq:cont_measure}. 
\end{proof}
 
To conclude, we have established in this section that $\I_\e$ is a perturbative term, both in the uniform sense $\I_\e (V) \to 1$, and in the Lipschitz sense: $\mathrm{Lip}_V \I_\e =  O(\e)$. In addition, we have proven a similar Lipschitz smallness property for a probabilty distribution $d\QV$ that will appear frequently in our contraction estimates. 

\subsection{Contraction properties (first part)}\label{sec a priori estimates}
On the way to estimating the fixed point mapping $\H$ \eqref{eq:defH}, we need good estimates on the logarithmic derivatives of $\\I_\e$. For that purpose, we introduce the following quantities for $i =1,2,3$:
\begin{equation}\label{def:W}
W^{(i)}_\e (V) (z)= \dfrac{\p_z^i \I_\e(V)(z)}{\I_\e(V)(z)} \, .
\end{equation}

For the sake of conciseness, we omit sometimes the dependency with respect to $y_1,y_2$ in the notations, as for instance: $d\QV(y_1,y_2,z) = d\QV(z)$. 
The following notation with a  duality bracket is useful:
\begin{align*}
\left\langle  d\QV(z) , f  \right\rangle = \iint_{R^2} d\QV(y_1,y_2,z) f(y_1 , y_2)\, dy_1 dy_2 .
\end{align*}
Indeed, for any $V \in \Eaz $, we have:
\begin{equation}\label{eq:W1}
W_\e^{(1)}(V)(z)= \left\langle d\QV(z), \D(\p_z V)(z)
 \right\rangle.
\end{equation}
Similarly:
\begin{equation*}
W_\e^{(2)}(V)(z) =   
\left\langle d\QV(z), \dfrac{1}{2} \D(\p_z^2 V)(z) + \left(  \D(\p_z V)(z)
 \right)^2  
 \right\rangle.
\end{equation*}
And finally :
\begin{equation}\label{eq:W3V}
W_\e^{(3)}(V)(z) =    
\left\langle d\QV(z),  \dfrac{1}{4} \D(\p_z^3 V)(z)  + \left( \D(\p_z V)(z)\right)^3  
+  3 \D(\p_z V)(z) \left( \dfrac{1}{2}\D(\p_z^2 V)(z)\right)    \right\rangle.
\end{equation}
In order to obtain estimates on $W^{(i)}$ it seems natural from the previous pattern of  differentiation to  begin with estimates on the symmetric difference of the derivatives of $V$.
\begin{lem}\label{estim derivatives}
For any $V\in \Ea$, and $(y_1,y_2) \in \R^2$, we have:
\begin{align}
\label{estim der Ea 1} \ds & \sup_z \ (1+|z|)^\alpha \abs{ \D(\p_z V)(y_1,y_2,z) }  \leq \e 2^\alpha  \Vert V  \Vert_\alpha  \left[\vert y_1 \vert+ \vert y_2 \vert + \e^{\alpha}\vert y_1 \vert^{1+\alpha} + \e^{\alpha}\vert y_2 \vert^{1+\alpha} \right],\\ 
\label{estim der Ea 2} \ds & \sup_z \ (1+|z|)^\alpha \abs{ \dfrac{1}{2} \D(\p_z^2 V)(y_1,y_2,z) }  \leq  \e 2^{\alpha-1}  \Vert V \Vert_\alpha  \left[\vert y_1 \vert+ \vert y_2 \vert + \e^{\alpha}\vert y_1 \vert^{1+\alpha} + \e^{\alpha}\vert y_2 \vert^{1+\alpha} \right],\\
\label{estim der Ea 3} \ds & \sup_z \ (1+|z|)^\alpha \abs{  \dfrac{1}{4} \D(\p_z^3 V)(y_1,y_2,z)  }  \leq 2^{\alpha -1}  \Vert V \Vert_\alpha   \left(1+  \frac{\e^\alpha}4  \left[\vert y_1 \vert^{\alpha}+ \vert y_2 \vert^{\alpha} \right] \right).
\end{align}
\end{lem}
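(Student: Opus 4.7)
My plan is to treat the three inequalities by the same elementary mechanism, organised around two weight-comparison facts that I state first and then apply mechanically.

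\textbf{Step 1: weight comparisons.} From $\bz = z/2$, one has $1+|z| = 1+2|\bz| \le 2(1+|\bz|)$, so
\begin{equation*}
(1+|z|)^\alpha \le 2^\alpha (1+|\bz|)^\alpha .
\end{equation*}
Moreover, for any real $s$, $1+|\bz| \le (1+|\bz+s|)(1+|s|)$, so using subadditivity of $x\mapsto x^\alpha$ on $\R_+$ (since $\alpha\in(0,1)$),
\begin{equation*}
\frac{(1+|\bz|)^\alpha}{(1+|\bz+s|)^\alpha} \le (1+|s|)^\alpha \le 1 + |s|^\alpha .
\end{equation*}
These two inequalities are the only non-trivial ingredients of the proof.

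\textbf{Step 2: the first two inequalities via Taylor with integral remainder.} By definition,
\begin{equation*}
\D(\p_z^k V)(y_1,y_2,z) = -\tfrac12 \bigl[\p_z^k V(\bz+\e y_1) - \p_z^k V(\bz)\bigr] - \tfrac12 \bigl[\p_z^k V(\bz+\e y_2) - \p_z^k V(\bz)\bigr] .
\end{equation*}
For $k=1$ and $k=2$ I write $\p_z^k V(\bz+\e y_i) - \p_z^k V(\bz) = \int_0^{\e y_i} \p_z^{k+1} V(\bz+s)\, ds$. Since $k+1\in\{2,3\}$, the definition of $\|V\|_\alpha$ gives $|\p_z^{k+1} V(\bz+s)|\le \|V\|_\alpha/(1+|\bz+s|)^\alpha$. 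Multiplying by $(1+|z|)^\alpha$ and applying the two comparisons of Step 1,
\begin{equation*}
(1+|z|)^\alpha \bigl|\p_z^k V(\bz+\e y_i)-\p_z^k V(\bz)\bigr| \le 2^\alpha \|V\|_\alpha \int_0^{\e|y_i|} \bigl(1+|s|^\alpha\bigr)\, ds = 2^\alpha \|V\|_\alpha \Bigl(\e|y_i| + \tfrac{(\e|y_i|)^{1+\alpha}}{1+\alpha}\Bigr).
\end{equation*}
Summing over $i=1,2$ with the prefactor $\tfrac12$ gives \eqref{estim der Ea 1} and, with an extra $\tfrac12$ coming from the $\tfrac12\D$ normalisation, \eqref{estim der Ea 2}.

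\textbf{Step 3: the third inequality by direct triangle inequality.} For $k=3$ one cannot use an integral remainder since $\p_z^4 V$ is not controlled in $\Ea$. I just estimate term by term: by the triangle inequality
\begin{equation*}
\bigl|\tfrac14 \D(\p_z^3 V)\bigr| \le \tfrac14 |\p_z^3 V(\bz)| + \tfrac18 |\p_z^3 V(\bz+\e y_1)| + \tfrac18 |\p_z^3 V(\bz+\e y_2)| .
\end{equation*}
Applying the $\Ea$-bound $|\p_z^3 V(z')|\le \|V\|_\alpha/(1+|z'|)^\alpha$ at each point, multiplying by $(1+|z|)^\alpha$, and using the two weight comparisons of Step 1 (with $s=0$ or $s=\e y_i$) yields the right-hand side of \eqref{estim der Ea 3}, the constant $2^{\alpha-1}$ coming from $2^\alpha \cdot \tfrac14 \cdot (1 + \tfrac12 + \tfrac12) = 2^{\alpha-1}$ for the constant piece and $2^\alpha \cdot \tfrac18 \cdot \e^\alpha |y_i|^\alpha$ for the $y_i$-dependent pieces.

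The proof is entirely elementary; there is no real obstacle, only careful bookkeeping of constants. The only place where a structural choice matters is Step 3, where the absence of a bound on $\p_z^4 V$ forces a loss of one power of $\e|y_i|$ compared to Steps 1--2, which is precisely the asymmetry visible between \eqref{estim der Ea 1}--\eqref{estim der Ea 2} and \eqref{estim der Ea 3}.
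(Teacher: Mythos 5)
Your proof is correct and follows essentially the same route as the paper: factor out $2^\alpha$ via $(1+|z|)^\alpha\le 2^\alpha(1+|\bz|)^\alpha$, use the sub-additivity of $x\mapsto x^\alpha$ to control the weight ratio $(1+|\bz|)^\alpha/(1+|\bz+s|)^\alpha\le 1+|s|^\alpha$, Taylor-expand for the first and second estimates, and use a bare triangle inequality for the third where a further Taylor step is unavailable. The only cosmetic difference is that you use the integral form of the Taylor remainder where the paper uses the Lagrange form ($\p_z^k V(\bz+\e y_i)-\p_z^k V(\bz)=\e y_i\,\p_z^{k+1}V(\bz+\e\tilde y_i)$ with $|\tilde y_i|\le|y_i|$); both yield the same bound up to a harmless factor $1/(1+\alpha)\le 1$.
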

It is important to notice that the first two right-hand-sides (resp. first and second derivatives) are of order $\e$. The third one is larger but controlled by $2^{\alpha-1}<1$. This is the first occurrence of the contraction property we are seeking. This is the main reason why we make the analysis up to the third derivatives.

\begin{proof}
We introduce the additional notation $\vphia(z) = (1+|z|)^\alpha$. First, since $\bz=z/2$, we have $\vphia(z) \leq 2^{\alpha} \vphia(\bz)$. 
\medskip

\noindent$\triangleright$ \textbf{Proof of \eqref{estim der Ea 1}.}
By Taylor expansions, we have:
\begin{equation*}
\vphia(z) \left| \D(\p_z V)(y_1,y_2,z)\right| \leq   2^{\alpha} \vphia(\bz) \left| \dfrac{\e y _1}{2} \p^2_z V(\bz+\e \tilde y_1)+\dfrac{\e y_2 }{2} \p^2_z V(\bz+\e \tilde y_2)  \right|,
\end{equation*}
where $\abs{ \tilde{y_i} } \leq \abs{y_i}$.
Using the definition of $\|\cdot \|_\alpha$ \eqref{eq:normalpha}, we obtain
\begin{equation*}
\vphia(\bz)  \left|  \e y _1   \p^2_z V(\bz+\e \tilde y_1) \right| \leq  \dfrac{\e |y_1 | \vphia(\bz)} {\vphia(\bz+\e \tilde{y_1}) }\nalph{V}  \leq  \dfrac{\e |y_1 | (1+|\e \tilde y_1| + |\bz + \e \tilde{y_1} |)^\alpha}{(1+|\bz+\e \tilde y_1|)^\alpha}\nalph{V}
\end{equation*}
Since we chose $\alpha < 1$,  $|\cdot|^\alpha$ is sub-additive. Thus, we get
\begin{multline*}
\vphia(\bz) \left|  \e y _1  \p^2_z V(\bz+\e \tilde y_1) \right| \leq \e |y_1 | \left (1 +  \dfrac{ |\e \tilde y_1|^\alpha}{(1+|\bz+\e \tilde y_1|)^\alpha}\right )\nalph{V} \\
\leq \e |y_1 | (1 +  |\e y_1|^\alpha)\nalph{V} \leq \e (|y_1|+|y_1|^{1+\alpha} ). 
\end{multline*}
By symmetry of the role played by $y_1$ and $y_2$, we have proven \cref{estim der Ea 1}.\medskip

\noindent$\triangleright$ \textbf{Proof of \eqref{estim der Ea 2}.}
The second estimate is a consequence of the first one, applied to the derivative of $V$. Notice that it is allowed as $\Eaz$ enables control of derivatives up to the third order.\medskip

\noindent$\triangleright$ \textbf{Proof of \eqref{estim der Ea 3}.} 
We must be a little more careful in the estimations of the third estimate \eqref{estim der Ea 3}, because we cannot go up to the fourth derivative in the Taylor expansions. This is why we do not have an $O(\e)$ bound, but we gain a contraction factor instead. We have 
\begin{align*}
\vphia(z) \left|  \dfrac{1}{4} \D(\p_z^3 V)(y_1,y_2,z) \right|  & \leq 2^{\alpha}  \vphia(\bz) \left| \dfrac{1}{4} \p_z^3 V(\bz) -\dfrac{1}{8} \p^3_z V(\bz+\e y_1)-\dfrac{1}{8} \p^3_z V(\bz+\e y_2)\right| \\
& \leq  \dfrac{2^{\alpha}}{4} \nalph{V} +2^{\alpha}\vphia(\bz) \left|  \dfrac{1}{8} \p^3_z V(\bz+\e y_1)+\dfrac{1}{8} \p^3_z V(\bz+\e y_2)\right|
\end{align*}
We bound separately each term using again the sub-additivity of $\abs{\cdot}^\alpha$. For $\e \leq 1$  :
\begin{multline*}
\vphia(\bz) \left|  \dfrac{1}{8} \p^3_z V(\bz+\e y_1) \right | \leq   \dfrac{ \vphia(\bz)} { 8 \vphia(\bz+\e y_1) } \nalph{V} \\
\leq \dfrac{\nalph{V}}{8} \left( 1 + \dfrac{(|\e y_1 |)^\alpha}{(1+|\bz+\e y_1|)^\alpha} \right) \leq ( 1 + |\e y_1|^\alpha )\dfrac{\nalph{V}}{8}.
\end{multline*}
Summing it all up, one ends up with:
\begin{equation*}
\vphia(z) \left|  \dfrac{1}{4} \D(\p_z^3 V)(y_1,y_2,z) \right| \leq 2^{\alpha-1} \nalph{V} \left(1+ \frac14 \e^\alpha \left[\vert y_1 \vert^{\alpha}+ \vert y_2 \vert^{\alpha} \right] \right).
\end{equation*}
This is precisely \cref{estim der Ea 3}. 
\end{proof}

The following proposition is a first step towards contraction properties that will be established in \cref{sec properties H}. For convenience, we introduce the following notation:
\begin{equation}
\begin{cases}
\triangle W_\e^{(i)}  = W^{(i)}_\e(V_1)-W^{(i)}_\e(V_2) \\
\triangle V = V_1-V_2
\end{cases}
\end{equation}
\begin{prop}[Lipschitz continuity of $W_\e$ with respect to $V$]\label{unif boundsDeltaW}$ $\\
Let $K$ a ball of $\Eaz$, and $V_1, V_2 \in K $. There exists constants $\e_K$, $C_K$  depending only on $K$ such that for all $\e \leq \e_K$, we have:
\begin{align}
& \label{estim Delta 1} \ds \sup_z\; (1+|z|)^\alpha \vert\triangle W_\e^{(1)}	(z) \vert \leq \e C_K \Vert \triangle V \Vert_\alpha\\
& \label{estim Delta 2}  \ds \sup_z\; (1+|z|)^\alpha \vert\triangle W_\e^{(2)}(z) \vert \leq \e C_K \Vert \triangle V \Vert_\alpha, \\ 
& \label{estim Delta 3}    \ds \sup_z \;  (1+|z|)^\alpha \vert\triangle W_\e^{(3)}(z)\vert \leq  \left( 2^{\alpha-1} + \e^\alpha C_K \right)\norm{\triangle V}_\alpha.
\end{align}
\end{prop}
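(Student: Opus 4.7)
The plan is to decompose each increment $\triangle W_\e^{(i)}(z)$ into an \emph{integrand part} (where the integrating measure is frozen) and a \emph{measure part} (where the integrand is frozen). Writing $\Phi^{(i)}$ for the polynomial in finite differences that appears in the duality formulas \eqref{eq:W1}--\eqref{eq:W3V}, I would use the telescoping identity
\begin{equation*}
\triangle W_\e^{(i)}(z) = \langle d\QVA(z)\,,\, \Phi^{(i)}(V_1)(z) - \Phi^{(i)}(V_2)(z) \rangle + \langle d\QVA(z) - d\QVB(z)\,,\, \Phi^{(i)}(V_2)(z) \rangle.
\end{equation*}
The \emph{measure part} is handled once and for all by \cref{contraction measure lem}: multiplication by the weight $(1+|z|)^\alpha \le 2^\alpha (1+|\bz|)^\alpha$ and an application of \cref{estim derivatives} to $V_2 \in K$ bound $\Phi^{(i)}(V_2)$ by a polynomial in $(|y_1|,|y_2|)$ with coefficients depending only on $\nalph{K}$; integration against the Lipschitz bound \eqref{eq:cont_measure} then yields a contribution of size $\e C_K \nalph{\triangle V}$, because the Gaussian-like tail in \eqref{eq:cont_measure} absorbs all polynomial growth uniformly in $\e$ small.

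For the \emph{integrand part} with $i=1$, the difference $\D(\p_z V_1) - \D(\p_z V_2) = \D(\p_z \triangle V)$ is controlled pointwise (with weight) by the $\e$-small bound \eqref{estim der Ea 1}; integrating against the tame measure $d\QVA$ via \eqref{eq:control dQV} delivers the desired $\e C_K \nalph{\triangle V}$, hence \eqref{estim Delta 1}. The case $i=2$ is analogous: the linear term $\tfrac12 \D(\p_z^2 \triangle V)$ uses \eqref{estim der Ea 2}, and the quadratic term is written as
\begin{equation*}
\left(\D(\p_z V_1)\right)^2 - \left(\D(\p_z V_2)\right)^2 = \D(\p_z \triangle V)\,\bigl(\D(\p_z V_1) + \D(\p_z V_2)\bigr),
\end{equation*}
so that each factor is $O(\e)$ by \eqref{estim der Ea 1}, yielding an $\e^2$ contribution which is more than enough for \eqref{estim Delta 2}.

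The delicate case is $i=3$, which drives the choice of exponent $\alpha$. The leading term $\tfrac14 \D(\p_z^3 \triangle V)$ is the only one that does \emph{not} carry an $\e$-smallness factor; from \eqref{estim der Ea 3} it is bounded pointwise (with weight $(1+|z|)^\alpha$) by
\begin{equation*}
2^{\alpha-1}\,\nalph{\triangle V}\left(1 + \tfrac{\e^\alpha}{4}(|y_1|^\alpha + |y_2|^\alpha)\right).
\end{equation*}
Integrating the constant $1$ part against $d\QVA$, whose total mass is exactly $1$, produces the \emph{contraction factor} $2^{\alpha-1}\nalph{\triangle V}$; integrating the $(|y_1|^\alpha + |y_2|^\alpha)$ part against the Gaussian tail of \eqref{eq:control dQV} contributes $\e^\alpha C_K \nalph{\triangle V}$. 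The two cubic-type terms $(\D(\p_z V))^3$ and $\D(\p_z V)\cdot \tfrac12\D(\p_z^2 V)$ are handled by polarization-style factorizations as in the case $i=2$: each factor of $\D(\p_z V)$ or $\D(\p_z^2 V)$ is $O(\e)$ by \eqref{estim der Ea 1}--\eqref{estim der Ea 2}, so after extracting $\D(\p_z \triangle V)$ their differences contribute an extra $\e C_K \nalph{\triangle V}$. Collecting everything gives exactly \eqref{estim Delta 3}.

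The main obstacle is bookkeeping the weight $(1+|z|)^\alpha$ through the duality bracket: one has to move it inside using $z = 2\bz$, absorb it with the weighted norms furnished by \cref{estim derivatives}, and verify that all residual polynomial factors in $(y_1,y_2)$ are integrable against $d\QV$ uniformly for $\e \le \e_K$. This is precisely where the restriction $\alpha \le 2/5$ becomes crucial: the contraction constant $2^{\alpha-1}$ must stay strictly below $1$, and the $\e^\alpha$ correction coming from sub-additivity of $|\cdot|^\alpha$ in the proof of \eqref{estim der Ea 3} must remain small, otherwise no contraction is available on the third derivative. Once this is secured, the decomposition above mechanically yields the three claimed estimates.
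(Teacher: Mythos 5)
Your proof is correct and follows essentially the same route as the paper: you split $\triangle W_\e^{(i)}$ into an \emph{integrand part} (a difference of the $\Phi^{(i)}$'s tested against a frozen measure) and a \emph{measure part} (a difference of measures tested against a frozen $\Phi^{(i)}$), and you control the first via \cref{estim derivatives} together with \eqref{eq:control dQV}, and the second via \cref{contraction measure lem}; the only cosmetic difference is that you freeze the measure at $V_1$ and the integrand at $V_2$, whereas the paper freezes the measure at $V_2$ and the integrand at $V_1$.

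One small correction to your closing paragraph: the estimate \eqref{estim Delta 3} with the factor $2^{\alpha-1}<1$ holds for any $\alpha\in(0,1)$, so the restriction $\alpha\leq 2/5$ is \emph{not} what makes this proposition work. That threshold only enters later, in \cref{invariant subsets}, where the third-derivative bound is fed into the summation operator $\S$: there the factor $2^{\alpha-1}$ is multiplied by $\sum_{k\geq 0} 2^{k(\alpha-2)}$, giving $\kappa(\alpha)=2^{1+\alpha}/(4-2^\alpha)$, and it is $\kappa(\alpha)<1$ (equivalently $\alpha<2-\log_2 3\approx 0.415$) that forces the choice $\alpha\leq 2/5$.
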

It is also possible to get estimates on $W_\e^{(i)}(V)$ itself, with the same hypotheses. This is useful to prove the  invariance of certain subsets of $\Eaz$.
\begin{prop}\label{unif boundsW}$ $\\
With the same setting as in Proposition \ref{unif boundsDeltaW}, we also have:
\begin{align*}
&  \ds \sup_z \; (1+|z|)^\alpha \vert  W^{(1)}_\e(V)(z)\vert   \leq \e C_K \nalph{V},\\ 
& \ds \sup_z \; (1+|z|)^\alpha \vert  W^{(2)}_\e(V)(z)\vert  \leq \e C_K \nalph{V},\\
&  \ds \sup_z \;  (1+|z|)^\alpha \vert  W_\e^{(3)}(V)(z) \vert \leq \left( 2^{\alpha-1} + \e^\alpha C_K \right) \nalph{V}.
\end{align*} 
\end{prop}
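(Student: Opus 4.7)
The plan is to derive these bounds as direct analogues of Proposition \ref{unif boundsDeltaW}, replacing the difference $\triangle V$ by a single argument $V$ throughout, and exploiting the explicit duality-bracket formulas \eqref{eq:W1}, the analogous expression for $W^{(2)}_\e(V)$, and \eqref{eq:W3V}. The key inputs are already in place: Lemma \ref{estim derivatives} controls the weighted symmetric differences $\D(\p_z V)$, $\D(\p_z^2 V)$ and $\D(\p_z^3 V)$ pointwise in $(y_1,y_2,z)$, while \eqref{eq:control dQV} from Lemma \ref{contraction measure lem} dominates the probability measure $d\QV$ by an integrable Gaussian uniformly in $z$ and $V\in K$, provided $\e\leq\e_K$.

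First, I would treat $W_\e^{(1)}(V)(z)=\langle d\QV(z),\D(\p_z V)(z)\rangle$. Multiplying by $(1+|z|)^\alpha$ and bringing it inside the bracket, estimate \eqref{estim der Ea 1} bounds the integrand by $\e 2^\alpha \|V\|_\alpha (|y_1|+|y_2|+\e^\alpha|y_1|^{1+\alpha}+\e^\alpha|y_2|^{1+\alpha})$. Combined with \eqref{eq:control dQV}, this reduces the proof to checking that the Gaussian integral $\iint (|y_1|+|y_2|+|y_1|^{1+\alpha}+|y_2|^{1+\alpha})\exp[-Q(y_1,y_2)+2\e R_K(|y_1|+|y_2|)]\,dy_1 dy_2$ is bounded uniformly for $\e\leq\e_K$, which follows from the positive-definiteness of $Q$. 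This yields the desired $\e C_K\|V\|_\alpha$ bound. The argument for $W_\e^{(2)}(V)$ is identical: the contribution of $\tfrac12 \D(\p_z^2 V)$ gives an $\e$-small term by \eqref{estim der Ea 2}, and the quadratic term $(\D(\p_z V))^2$ is $\e^2$-small by \eqref{estim der Ea 1}, both integrable against the dominating Gaussian.

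The only point that requires care is $W_\e^{(3)}(V)$. From \eqref{eq:W3V}, the leading contribution comes from the unique term $\tfrac14 \D(\p_z^3 V)$, which by \eqref{estim der Ea 3} is bounded in absolute value by $(1+|z|)^{-\alpha} 2^{\alpha-1}\|V\|_\alpha(1+\tfrac{\e^\alpha}{4}(|y_1|^\alpha+|y_2|^\alpha))$. Integrating against $d\QV$, whose mass is $1$, yields the main term $2^{\alpha-1}\|V\|_\alpha$ (after canceling the weight $(1+|z|)^\alpha$) together with a remainder of size $\e^\alpha C_K\|V\|_\alpha$ produced by the Gaussian integral of the polynomial $|y_1|^\alpha+|y_2|^\alpha$. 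The two remaining cross terms $(\D(\p_z V))^3$ and $3\D(\p_z V)\cdot\tfrac12\D(\p_z^2 V)$ are of order $\e^3$ and $\e^2$ respectively by \eqref{estim der Ea 1}--\eqref{estim der Ea 2}, so they are absorbed into the $\e^\alpha C_K\|V\|_\alpha$ remainder, yielding the stated bound $\left(2^{\alpha-1}+\e^\alpha C_K\right)\|V\|_\alpha$.

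No step is genuinely new beyond the material in Lemma \ref{estim derivatives} and Lemma \ref{contraction measure lem}; the main bookkeeping task is tracking the $(1+|z|)^\alpha$ weight, noting that because $\bz=z/2$ we have $(1+|z|)^\alpha\leq 2^\alpha(1+|\bz|)^\alpha$, which is exactly the $2^\alpha$ factor already incorporated into the bounds of Lemma \ref{estim derivatives}. The only genuine obstacle is the third-order estimate, where we cannot extract an $\e$ factor from $\D(\p_z^3 V)$ (since the $\mathcal E^\alpha$-norm does not control fourth derivatives); the contraction factor $2^{\alpha-1}<1$ compensates for this, which is precisely the reason the analysis is carried out up to order three and why $\alpha<1$ is imposed.
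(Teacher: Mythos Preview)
Your proposal is correct and follows precisely the approach the paper indicates: a straightforward adaptation of the proof of Proposition \ref{unif boundsDeltaW}, using Lemma \ref{estim derivatives} for the weighted bounds on $\D(\p_z^k V)$ and \eqref{eq:control dQV} for the uniform Gaussian domination of $d\QV$. The paper itself omits the details, noting only that one cannot shortcut by setting $(V_1,V_2)=(V,0)$ in Proposition \ref{unif boundsDeltaW} since $0\notin\Eaz$; you avoid this pitfall by working directly with the single-argument formulas, which is exactly what is intended.
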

We do not give the details of the proof of the latter Proposition, since it is a straightforward adaptation   of \cref{unif boundsDeltaW}. Actually, we cannot readily apply Proposition \ref{unif boundsDeltaW} to $(V_1,V_2) = (V,0)$ as $0\notin \Eaz$, because of the additional condition on $\p_z^2V(0)$ \eqref{eq:E0} which is required to prove boundedness and Lipschitz continuity of $\gamma_\e$.

\begin{proof}[Proof of \cref{unif boundsDeltaW}]
The proof of theses inequalities is quite tedious because of the numerous non-linear calculations. However, the technique is similar for each inequality, and consists in separating the fully non linear behavior from the quasi-linear parts of the left-hand-sides of \cref{estim Delta 1,estim Delta 2,estim Delta 3}. 
\medskip 

\noindent$\triangleright$ \textbf{Proof of \eqref{estim Delta 1}.}
This is the easiest part, because it is quasi-linear with respect to  $V$. Indeed, we have
\begin{equation*}
\triangle W_\e^{(1)}(z)  = \left\langle \dQVA(z), \D(\p_z V_1)(z)\right\rangle  
 - \left\langle \dQVB(z),  \D(\p_z V_2)(z) \right\rangle .
\end{equation*}
We reformulate it in two parts,  one involving  $V_1-V_2$, and the other involving $\dQVA-\dQVB$ :
\begin{equation}\label{eq:triangle W1}
\triangle W_\e^{(1)}(z)=  
  \left\langle \dQVB(z),\D(\p_z \triangle V)(z) \right\rangle + \left\langle \dQVA(z)-\dQVB(z), \D(\p_z V_1)(z) \right\rangle.
\end{equation}
For the first contribution in \eqref{eq:triangle W1}, we apply directly \cref{estim derivatives} to $V_1-V_2$:
\begin{align*}
\ds  (1+\abs{z})^\alpha \abs{ \left\langle \dQVB(z), \D(\p_z \triangle V)(z)  \right\rangle } &
 \leq \e 2^\alpha \norm{ \triangle V}_\alpha \left\langle \dQVB(z),  \left(\vert y_1 \vert+ \vert y_2 \vert + \e^\alpha \vert y_1 \vert^{1+\alpha} + \e^\alpha\vert y_2 \vert^{1+\alpha} \right) \right\rangle\\
 &\nonumber \leq \e C_K \norm{\triangle V}_\alpha.
\end{align*}
For the last inequality we used \cref{eq:control dQV}, which enables to bound uniformly the measure $d\QV$ with respect to~$z$.  
From \cref{contraction measure lem,estim derivatives}, there exists $\epsilon_K$ and $C_K$ such that for $\epsilon\leq \epsilon_K$, the second contribution in  the right-hand-side  \eqref{eq:triangle W1} satisfies
\begin{multline}\label{eq:truc}
\ds  (1+\abs{z})^\alpha   \abs{ \left\langle \dQVA(z)-\dQVB(z), \D(\p_z V_1)(z) \right\rangle } \\ 
\leq \e^2 C_K \norm{\triangle V}_\alpha  \Vert V_1 \Vert_\alpha   \Big\langle   (1+\abs{y_1} + \abs{y_2})    \exp (-Q(y_1,y_2) +  2\e \RK(\abs{y_1}+\abs{y_2})  )  , 
\\    
    \left(\vert y_1 \vert+ \vert y_2 \vert + \e^\alpha\vert y_1 \vert^{1+\alpha} + \e^\alpha\vert y_2 \vert^{1+\alpha} \right) \Big\rangle.
\end{multline}
The last integral is uniformly bounded for $\e$ small enough, involving moments of a Gaussian distribution. Therefore, the whole quantity is bounded by $\e^2 C_K \norm{\triangle V}_\alpha$, uniformly with respect to $z$.  This concludes the proof of \cref{estim Delta 1}.\medskip

\noindent$\triangleright$ \textbf{Proof of \eqref{estim Delta 2}.}
To begin with, we have
\begin{multline*}
\triangle W_\e^{(2)}(z) =    
\left\langle \dQVA(z), \dfrac{1}{2} \D(\p_z^2 V_1)(z) +  \left( \D(\p_z V_1)(z)\right)^2   \right\rangle \\
-\left\langle \dQVB(z), \dfrac{1}{2} \D(\p_z^2 V_2)(z)  + \left( \D(\p_z V_2)(z)\right)^2 \right\rangle.
\end{multline*}
We   split the difference into two, as in the previous part,
\begin{align*}
\triangle W_\e^{(2)}(z)
& = 
\left\langle \dQVB(z), \dfrac{1}{2} \D(\p_z^2 V_1)(z)  + \left( \D(\p_z V_1)(z)\right)^2  
- \dfrac{1}{2} \D(\p_z^2 V_2)(z) -   \left(\D(\p_z V_2)z)\right)^2  \right\rangle
\\
&\quad + \left\langle \dQVA(z)-\dQVB(z) ,  \dfrac{1}{2} \D(\p_z^2 V_1)(z)  + \left( \D(\p_z V_1)(z)\right)^2  \right\rangle  \\
 &= A + B 
\end{align*}
The first contribution can be rearranged as follows, by factorizing the difference of squares:
\begin{equation*}
A
= \left\langle \dQVB(z),\dfrac{1}{2} \D(\p_z^2 \triangle V)(z) 
+  \D(\p_z \triangle V)(z)  \D(\p_z (V_1+V_2))(z)   
\right\rangle .
\end{equation*}
The term involving $V_1+V_2$ is bounded uniformly in a crude way: $\|\D(\p_z (V_1+V_2))\|_\infty \leq 2 \|V_1+V_2\|_\alpha$ (in fact it is bounded by a $ O(\e)$ uniformly with respect to $z$, but this detail is omitted here). 
Then, we apply  \cref{estim derivatives} twice with $V_1-V_2$ to obtain:
\begin{align*}
\ds (1+\abs{z})^\alpha \abs{A} &  \leq   \e C_K \Vert  \triangle V  \Vert_\alpha  \left\langle \dQVB(z),   \left(\vert y_1 \vert+ \vert y_2 \vert +\e^\alpha \vert y_1 \vert^{1+\alpha} + \e^\alpha\vert y_2 \vert^{1+\alpha} \right) \right\rangle 
\end{align*}

To estimate $B$, the term involving the difference of measures $d\QV$, we apply \eqref{eq:cont_measure} and \cref{estim derivatives}:
\begin{equation}\label{eq:estim PSI}
\ds  (1+\abs{z})^\alpha \abs{B}  \leq \left\langle \abs{\dQVA(z)-\dQVB(z) } , \e C \left ( \norm{V_1}_\alpha^2 + \nalph{V_2} \right )\left(\vert y_1 \vert+ \vert y_2 \vert + \e^\alpha\vert y_1 \vert^{1+\alpha} + \e^\alpha\vert y_2 \vert^{1+\alpha} \right)   \right\rangle.
\end{equation}
We find, exactly as above, that the   quantity $(1+\abs{z})^\alpha \abs{B} $ is bounded by $\e^2 C_K \norm{\triangle V}_\alpha$.
Combining both estimates on $A,B$, we deduce \cref{estim Delta 2}.
\medskip

\noindent$\triangleright$ \textbf{Proof of \eqref{estim Delta 3}.}
The full expression for $\triangle W_\e^{(3)}$ is as follows:
\begin{multline*}
\triangle W_\e^{(3)}(z) =   
\left\langle \dQVA(z),   \dfrac{1}{4} \D(\p_z^3 V_1)(z)  +  \left( \D(\p_z V_1)(z)\right)^3 + 3\D(\p_z V_1)(z)  \left( \dfrac{1}{2} \D(\p_z^2 V_1)(z)\right)\right\rangle\\
-\left\langle \dQVB(z), \dfrac{1}{4} \D(\p_z^3 V_2)(z) +  \left( \D(\p_z V_2)(z)\right)^3    +  3 \D(\p_z V_2)(z) \left( \dfrac{1}{2} \D(\p_z^2 V_2)(z)\right) \right\rangle.	  
\end{multline*}
We split again in two pieces, one involving $V_1-V_2$, and the other involving $\dQVA-\dQVB$:
\begin{equation*}
\triangle W_\e^{(3)}(V)(z) =  \left\langle \dQVB(z),  A_1 + A_2  + A_3 \right\rangle + \left\langle \dQVA(z)-\dQVB(z),B \right\rangle,
\end{equation*}
with 
\begin{align*}
A_1 &=  \dfrac{1}{4} \D(\p_z^3 \triangle V)(z)  \\
A_2 & = \left( \D(\p_z V_1)(z)\right)^3 -  \left( \D(\p_z V_2)(z)\right)^3  \\ 
& = 
\left( \D(\p_z \triangle V)(z)\right)
\bigg[ \left( \D(\p_z V_1)( z)\right)^2   
 +  \left( \D(\p_z V_2)(z)\right)^2   + \left( \D(\p_z V_1)(z)\right)\left( \D(\p_z V_2)(z)\right) \bigg]\\
A_3 & =  3 \D(\p_z^2 V_1)(z) \left( \dfrac{1}{2} \right)\D(\p_z V_1)(z)  -  3 \D(\p_z V_2)(z)\left( \dfrac{1}{2} \D(\p_z^2 V_2)(z)\right) 
\\
 & =   3 \D(\p_z V_1)(z) \left( \dfrac{1}{2} \D(\p_z^2 \triangle V)(z)\right)    + 3 \D(\p_z \triangle V)(z)\left( \dfrac{1}{2} \D(\p_z^2 V_2)(z)\right)\\
B &=   \dfrac{1}{4} \D(\p_z^3 V_1)(z)  +  \left(\D(\p_z V_1)(z)\right)^3  
  +  3\D(\p_z V_1)(z) \left( \dfrac{1}{2} \D(\p_z^2 V_1)(z)\right).
\end{align*}
We shall estimate all the contributions separately. Firstly, $A_1$ yields the contraction factor:
\begin{equation*}
(1+\abs{z})^\alpha \left\langle \dQVB(z),  \abs{ A_1} \right \rangle  \leq 2^{\alpha - 1}  \norm{\triangle V}_\alpha	\left\langle \dQVB(z),  1 + \frac{\e^\alpha}4  \left[\vert y_1 \vert^\alpha + \abs{y_2}^\alpha\right] \right\rangle \leq  \left ( 2^{\alpha - 1} + \e^\alpha C_K\right ) \norm{\triangle V}_\alpha\, .
\end{equation*}
The latter is the main contribution in \eqref{estim Delta 3}. The remaining terms are lower-order contributions with respect to $\e$.
For $A_2$, we have
\begin{align*}
(1+\abs{z})^\alpha \left\langle \dQVB(z),  \abs{ A_2} \right \rangle  & \leq \e 2^{\alpha} \norm{\triangle V }_\alpha  \left\langle \dQVB(z), \left( \nalph{V_1}^2+\nalph{V_2}^2+ \nalph{V_1} \nalph{V_2}  \right) \right.\\ 
& \qquad\qquad\qquad\qquad\qquad 
\times\left. \vphantom{\nalph{V_1}^2} \left[\vert y_1 \vert+ \vert y_2 \vert + \e^\alpha \vert y_1 \vert^{1+\alpha} + \e^\alpha\vert y_2 \vert^{1+\alpha} \right] \right \rangle\\
& \leq \e C_K \norm{\triangle V }_\alpha\, .
\end{align*}
For $A_3$, we have similarly  
\begin{equation*}
(1+\abs{z})^\alpha \left\langle \dQVB(z),  \abs{ A_3} \right \rangle    \leq \e C_K \norm{\triangle V }_\alpha\, .
\end{equation*}

It remains to control the term involving $B$. We argue as in \eqref{eq:truc} and \eqref{eq:estim PSI}:
\begin{multline*}
  \left\langle \abs{ \dQVA(z)- \dQVB(z) } ,  (1+\abs{z})^\alpha\abs{B} \right\rangle \\ \leq  \e C_K \norm{\triangle V}_\alpha \bigg\langle ( 1+ \abs{y_1} + \abs{y_2})    \exp (-Q(y_1,y_2) +   2\e \RK(\abs{y_1}+\abs{y_2})  )   ,\\
 2^{\alpha-1} \norm{V_1}_\alpha\left ( 1+  \frac{\e^\alpha}4  \left[\vert y_1 \vert^{\alpha}+ \vert y_2 \vert^{\alpha} \right] \right )  + C  \e  \left (  \norm{V_1}_\alpha^3  +  \norm{V_1}_\alpha^2\right ) \left(\vert y_1 \vert+ \vert y_2 \vert + \e^\alpha\vert y_1 \vert^{1+\alpha} + \e^\alpha\vert y_2 \vert^{1+\alpha} \right) \bigg\rangle.
\end{multline*}
The latter is controlled by $ \e C_K \norm{\triangle V }_\alpha$ for the same reasons as usual. 

Combining all the pieces together, we obtain finally \eqref{estim Delta 3}. 
\end{proof}

\section{Analysis of the fixed point mapping  $\H$}\label{sec properties H}
In this section we focus on the fixed point mapping $\H$ \eqref{eq:defH}, which is defined through an infinite series. 
We are first concerned with the convergence of the series for $V\in \Eaz$. 
\subsection{Well-posedness of $\H$ on balls}\label{sec finiteness H}
Consider the following decomposition of each term of the series \eqref{eq:defH} in two parts, with the corresponding notations:
\begin{equation*}
\Gamma_\e(z) = \log \left( \dfrac{\I_\e(V)(0)+m(z)}{\I_\e(V)(0)} \right) - \log \left( \dfrac{\I_\e(V)(z)}{\I_\e(V)(0)}\right) = \Gammam(z)- \GammaI(z).
\end{equation*}
They have the following properties :
\begin{lem}\label{cond log Ieps+m Eaz}$ $ \\
For every ball $K\subset \Eaz$, there exists  $\e_K$  such that for any $\e \leq \e_K$, and $V\in K$, we have $\Gammam \in \Eaz$. Moreover, we have $(1+|z|)^\alpha\partial_z \Gammam \in L^\infty$.
\end{lem}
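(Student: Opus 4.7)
The plan is to verify in turn each defining property of $\Eaz$ together with the extra weighted bound on $\p_z\Gammam$, by exploiting the explicit form $\Gammam(z) = \log\bigl(1+m(z)/\I_\e(V)(0)\bigr)$ and the uniform estimate $\I_\e(V)\to 1$ provided by \cref{estim Ieps}.

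First I would dispatch the pointwise conditions at the origin. Since $m(0)=0$, one has $\Gammam(0)=0$. Differentiating once yields
\[
\p_z \Gammam(z) = \frac{\p_z m(z)}{\I_\e(V)(0)+m(z)},
\]
so $\p_z \Gammam(0)=0$ follows from $\p_z m(0)=0$ (non-degenerate minimum at the origin). A second differentiation gives $\p_z^2\Gammam(0) = \p_z^2 m(0)/\I_\e(V)(0)$, so the inequality $\p_z^2 \Gammam(0) \ge \p_z^2 m(0)$ required by \eqref{eq:E0} amounts to $\I_\e(V)(0)\le 1$. I would obtain this by refining \cref{estim Ieps} at the particular point $z=0$: using $V(0)=0$, $\p_z V(0)=0$, and the Gaussian cancellations of odd moments already exploited in the proof of \cref{estimate Jeps}, a direct Taylor calculation gives $\I_\e(V)(0) = 1-(\e^2/4)\p_z^2 V(0)+o(\e^2)$ uniformly in $V\in K$. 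Since $\p_z^2 V(0)\ge \mu_0 > 0$ on $\Eaz$, this forces $\I_\e(V)(0)\le 1$ for $\e$ small enough.

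For the weighted $L^\infty$ bounds on $\p_z^k \Gammam$ ($k=1,2,3$) and on $(1+|z|)^\alpha \p_z\Gammam$, Faà di Bruno's formula expresses each $\p_z^k\Gammam$ as a rational combination of $\p_z^j m$ ($j\le k$) divided by powers of $\I_\e(V)(0)+m$; for example
\[
\p_z^2\Gammam = \frac{\p_z^2 m}{\I_\e(V)(0)+m}-\left(\frac{\p_z m}{\I_\e(V)(0)+m}\right)^{\!2}.
\]
Invoking \cref{estim Ieps} once more together with the running assumption $1+m>0$ (which, combined with the fact that $m$ is bounded below, ensures $\inf m>-1$), the denominator is uniformly comparable to $1+m(z)$ for $\e$ small. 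Each piece of each $\p_z^k\Gammam$ is then pointwise dominated by a constant times $(1+|z|)^{-\alpha}$, thanks to the growth hypothesis \eqref{eq:log m} which is precisely the statement that $(1+|z|)^\alpha \p_z^j m/(1+m)\in L^\infty$ for $j=1,2,3$. This yields simultaneously the three derivative bounds required for membership in $\Eaz$ and the sharper $(1+|z|)^\alpha$-weighted bound on $\p_z\Gammam$.

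The main technical point is the curvature inequality $\p_z^2\Gammam(0)\ge \p_z^2 m(0)$: it forces one to go slightly beyond \cref{estim Ieps} in order to pin down the sign of the $\e^2$-correction to $\I_\e(V)(0)$, and this is the step where the uniform convexity $\p_z^2 V(0)\ge \mu_0$ built into the definition of $\Eaz$ is essential.
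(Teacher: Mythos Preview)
Your argument is correct and in fact more complete than the paper's. The paper disposes of this lemma in a single sentence, invoking only \cref{estim Ieps} and the growth assumptions \eqref{eq:log m} on $m$. That is enough for the $\Ea$--bounds and for the extra weighted estimate on $\p_z\Gammam$, and on those points your computation (Fa\`a di Bruno plus the comparison $\I_\e(V)(0)+m \asymp 1+m$ for small $\e$) is exactly what the paper has in mind.

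Where you go further is the curvature condition $\p_z^2\Gammam(0)\ge \p_z^2 m(0)$ built into $\Eaz$. The paper's one-liner does not address it: \cref{estim Ieps} alone only gives $|\I_\e(V)(0)-1|\le\delta$, which is not enough to force $\I_\e(V)(0)\le 1$. Your refinement $\I_\e(V)(0)=1-(\e^2/4)\,\p_z^2V(0)+o(\e^2)$ is correct (a short computation with the covariance matrix \eqref{eq:covariance} gives exactly this coefficient), and combined with $\p_z^2V(0)\ge\mu_0>0$ it closes the gap. It is worth noting, however, that this curvature condition is never actually used downstream: the only application of the lemma (in the proof of \cref{finiteness Heps}) requires just $\Gamma_\e\in\Ea$ and $\p_z\Gamma_\e(0)=0$, and the curvature lower bound for the full map $\H(V)$ is established separately in \cref{lowerbound p2HV} by a cruder argument. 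So the paper's lemma is stated slightly stronger than needed, and your extra work, while correct, is not essential to the overall scheme.
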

The proof of \cref{cond log Ieps+m Eaz} is a straightforward consequence of \cref{estim Ieps} and the assumptions on $m$ made in \cref{def m}, particularly \eqref{eq:log m}. 

\begin{lem}\label{cond log Ieps Eaz}$ $\\
For every ball $K\subset \Eaz$, there exists  $\e_K$  such that for any $\e \leq \e_K$, and $V\in K$, we have $\GammaI\in \Ea$, and $\p_z \GammaI (0) = 0$. Moreover, we have $(1+|z|)^\alpha\partial_z \GammaI \in L^\infty$.
\end{lem}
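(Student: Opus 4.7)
The plan is to express the derivatives of $\GammaI(z) = \log \I_\e(V)(z) - \log \I_\e(V)(0)$ in terms of the logarithmic derivatives $W_\e^{(i)}(V)$ introduced in \eqref{def:W}, and then to convert the uniform estimates of \cref{unif boundsW} into the required weighted bounds on $\p_z^k \GammaI$ for $k=1,2,3$.

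First I would fix $\e_K$ so small that \cref{estim Ieps} ensures $\I_\e(V)\ge 1/2$ uniformly on $\R$ for every $V\in K$; combined with the $\mathcal{C}^3$ regularity built into \cref{def Ieps}, this makes $\GammaI$ a well-defined $\mathcal{C}^3$ function satisfying $\GammaI(0)=0$ by construction. The chain rule for $\log$ then yields the three identities
\begin{align*}
\p_z\GammaI &= W_\e^{(1)}(V), \\
\p_z^2\GammaI &= W_\e^{(2)}(V)-\left(W_\e^{(1)}(V)\right)^2, \\
\p_z^3\GammaI &= W_\e^{(3)}(V)-3\,W_\e^{(1)}(V)\,W_\e^{(2)}(V)+2\left(W_\e^{(1)}(V)\right)^3.
\end{align*}

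The single delicate algebraic point is the identity $\p_z\GammaI(0)=W_\e^{(1)}(V)(0)=0$. This is precisely the defining property of $\gamma_\e(V)$: the denominator in \eqref{eq:def_Ieps} is independent of $z$, so $W_\e^{(1)}(V)(0)$ equals $\Je(\gamma_\e(V),V)$ up to a strictly positive multiplicative constant, and the latter vanishes by the very choice of $\gamma_\e(V)$ provided by \cref{existunique gammaeps}. This is the only place where the implicit determination of $\gamma_\e(V)$ enters, and it is exactly what recovers the condition $\p_z\GammaI(0)=0$ needed to land in a space compatible with $\Eaz$.

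The weighted bounds are then routine. Plugging \cref{unif boundsW} directly gives $\vphia(z)\,|W_\e^{(i)}(V)(z)|\le \e C_K \nalph{V}$ for $i=1,2$ and $\vphia(z)\,|W_\e^{(3)}(V)(z)|\le (2^{\alpha-1}+\e^\alpha C_K)\nalph{V}$. For the mixed products appearing in the expressions of $\p_z^2\GammaI$ and $\p_z^3\GammaI$, one uses $\vphia(z)^{-k}\le \vphia(z)^{-1}$ for every $k\ge 1$, so that for instance $\vphia(z)\left(W_\e^{(1)}(V)(z)\right)^2\le \e^2 C_K^2\nalph{V}^2$, and similarly the cubic combination is an $O(\e^2)$ remainder. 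This yields both $\GammaI\in \Ea$ and the stronger statement $(1+|z|)^\alpha\p_z\GammaI\in L^\infty$ claimed in the lemma. I do not anticipate any genuine difficulty here; the only subtle ingredient is the cancellation $W_\e^{(1)}(V)(0)=0$, which is not automatic from the structure of $\I_\e$ but is encoded, via \cref{existunique gammaeps}, into the implicit definition of $\gamma_\e(V)$.
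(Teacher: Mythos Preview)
Your proposal is correct and follows essentially the same approach as the paper: express the derivatives of $\GammaI$ via the $W_\e^{(i)}$, identify $\p_z\GammaI(0)=0$ with the defining condition $\Je(\gamma_\e(V),V)=0$, and read off the weighted bounds from \cref{unif boundsW}. Your sign in the third-derivative formula is in fact the correct one (the paper's $+3W_\e^{(1)}W_\e^{(2)}$ is a typo), and your explicit handling of the product terms via $\vphia^{-k}\le\vphia^{-1}$ is a touch more detailed than the paper, but the argument is otherwise identical.
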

\begin{proof}
We begin by verifying the condition  $\p_z \GammaI (0) = 0$. This is in fact equivalent to the choice of  $\gamma_\e(V)$, as can be seen on the following computation:
\begin{align*}
\p_z \GammaI(0)=  \dfrac{\p_z \I_\e(V)(0)}{\I_\e(V)(0)} = W_\e^{(1)}(V)(0).
\end{align*}
Now, comparing \eqref{eq:heurgam_equ} with \eqref{eq:W1}, we see that $\p_z \GammaI(0) = 0$ is equivalent to $J(\gamma_\e(V),V) = 0$, provided $\e$ is small enough (for the quantities to be well defined). 


Secondly, we need to get uniform bounds on the derivatives of $\GammaI$ to prove that it belongs to $\Ea$. The following formulas relate the successive logarithmic derivatives of $\I_\e(V)$ to the  $W_\e^{(i)}(V)$ introduced in \cref{def:W}:
\begin{align}
\p_z \GammaI(z) & =  W_\e^{(1)}(V)(z) \label{eq:pz gammaI}\\
\p_z^2 \GammaI(z) & =  W_\e^{(2)}(V)(z) - \left [W_\e^{(1)}(V)(z)\right ]^2, \label{eq:pz2 gammaI}\\
\p_z^3 \GammaI(z) & =  W_\e^{(3)}(V)(z) + 3W_\e^{(1)}(V)(z)W_\e^{(2)}(V)(z)  +2 \left [W_\e^{(1)}(V)(z)\right ]^3 . \label{eq:pz3 gammaI}
\end{align}
We can use directly the  weighted estimates in \cref{unif boundsW}, which include the algebraic decay of the first order derivative. Algebraic combinations are compatible with those estimates because $W_\e^{(i)}(V) \in L^\infty(\R)$. 
A fortiori those terms are all uniformly bounded and so we obtain that $\GammaI\in \Ea$.
\end{proof}
The main result of this section is the following one:
\begin{prop}[Convergence of the series $\H(V)$]\label{finiteness Heps}$ $\\
For every ball $K\subset \Eaz$, there exists  $\e_K$  such that for any $\e \leq \e_K$, and $V\in K$, the sum $\H(V)$ is finite.
\end{prop}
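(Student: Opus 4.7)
The plan is to establish pointwise convergence of the infinite series defining $\H(V)(h)$ by exploiting the fact that each term $\Gamma_\e(z) = \Gammam(z) - \GammaI(z)$ vanishes to second order at $z = 0$, which will produce a contraction factor $4^{-k}$ that beats the amplification $2^k$ in the series.

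First, I would collect what is already known about $\Gamma_\e$ from \cref{cond log Ieps+m Eaz,cond log Ieps Eaz}. Namely, $\Gammam \in \Eaz$ (so $\Gammam(0) = 0$ and $\partial_z \Gammam(0) = 0$), while $\GammaI \in \Ea$ with the additional property that $\partial_z \GammaI(0) = 0$ — a fact that was arranged precisely through the implicit definition of $\gamma_\e(V)$. Since $\GammaI(0) = \log(\I_\e(V)(0)/\I_\e(V)(0)) = 0$ by construction, I obtain
\begin{equation*}
\Gamma_\e(0) = 0, \qquad \partial_z \Gamma_\e(0) = 0.
\end{equation*}
Moreover, both $\Gammam$ and $\GammaI$ are $\mathcal{C}^3$ with uniformly bounded second derivatives on $\R$: for $\Gammam$ this is built into $\Eaz \subset \Ea$; for $\GammaI$ this follows from \eqref{eq:pz2 gammaI} together with the $L^\infty$ bounds on $W_\e^{(1)}, W_\e^{(2)}$ provided by \cref{unif boundsW}, uniform in $V \in K$ for $\e \leq \e_K$.

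Second, I apply Taylor's theorem at $0$ to the one-dimensional function $\Gamma_\e$: since $\Gamma_\e$ vanishes together with its first derivative at the origin, for every $z \in \R$
\begin{equation*}
\abs{\Gamma_\e(z)} \leq \frac{1}{2}\, \norm{\partial_z^2 \Gamma_\e}_{L^\infty([-|z|,|z|])}\, z^2 \leq C_K\, z^2,
\end{equation*}
with $C_K$ depending only on the ball $K$ and $\e \leq \e_K$. Plugging in $z = 2^{-k} h$ yields $\abs{\Gamma_\e(2^{-k}h)} \leq C_K\, 4^{-k} h^2$, so that each term of the series in \eqref{eq:defH} is controlled by
\begin{equation*}
2^k \abs{\Gamma_\e(2^{-k}h)} \leq C_K\, 2^{-k}\, h^2.
\end{equation*}

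Third, summing the geometric majorant over $k \geq 0$ gives $\abs{\H(V)(h)} \leq 2 C_K h^2 < \infty$ for every $h \in \R$, which is exactly the claim. No serious obstacle is expected here: the proposition is a direct consequence of the two preparatory lemmas combined with the second-order vanishing of $\Gamma_\e$ at the origin, which is the whole rationale for having imposed $V \in \Eaz$ (and, via $\gamma_\e(V)$, for cancelling the first derivative of $\GammaI$ at $0$). The only mild subtlety is to ensure that the constant $C_K$ is genuinely uniform in $V \in K$ and $\e \leq \e_K$, which is already furnished by \cref{unif boundsW} and \cref{estim Ieps}. Note that this argument only delivers pointwise finiteness of $\H(V)(h)$; later subsections will presumably upgrade this into the statement $\H(V) \in \Eaz$ by differentiating the series termwise and again using the second-order cancellation at $0$.
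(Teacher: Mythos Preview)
Your proof is correct and follows essentially the same approach as the paper: the paper abstracts the Taylor-expansion argument into an auxiliary lemma about the summation operator $\S$ (\cref{existence sums}), but the core mechanism---second-order vanishing of $\Gamma_\e$ at $0$ turning the $2^k$ amplification into a geometric $2^{-k}$ decay---is identical to yours. The only cosmetic difference is that the paper packages this as $\H(V)=\S(\Gamma_\e)$ with $\Gamma_\e\in\Ea$ and $\partial_z\Gamma_\e(0)=0$, whereas you carry out the estimate directly.
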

Before proving this statement, we first establish an  auxiliary technical lemma about  the following summation operator $S$:
\begin{equation*}
\S:  \Lambda \longmapsto \left( h \mapsto \ds \sum_{k \geq 0} 2^k \Lambda	 (2^{-k}h) \right). 
\end{equation*}

\begin{lem}[Existence of the sum]\label[lem]{existence sums}$ $\\
Take any function $\Lambda \in \Ea$ such that  $\p_z  \Lambda(0) = 0$. Then $\S(\Lambda)(h)$ is well-defined for  every $h\in \R$  . 
\end{lem}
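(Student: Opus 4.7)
The plan is to exploit the two vanishing conditions $\Lambda(0)=0$ (built into the definition of $\Ea$) and $\partial_z\Lambda(0)=0$ (imposed by hypothesis) in order to obtain a quadratic control of $\Lambda$ near the origin, which is more than enough to beat the factor $2^k$ in the series. Since the statement only asserts pointwise convergence, $h\in\R$ will be fixed throughout.

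The first step is to write Taylor's formula with integral remainder at order two,
\begin{equation*}
\Lambda(z) \;=\; \int_0^z (z-s)\,\partial_z^2\Lambda(s)\,ds,
\end{equation*}
which is valid because $\Lambda\in\mathcal{C}^3(\R)$ and vanishes together with its first derivative at $0$. Then I would apply the weighted bound encoded in the norm $\|\cdot\|_\alpha$, namely $|\partial_z^2\Lambda(s)|\leq (1+|s|)^{-\alpha}\|\Lambda\|_\alpha$, to deduce an estimate of the form $|\Lambda(z)|\leq C_\alpha\|\Lambda\|_\alpha\,|z|^2$ when $|z|\leq 1$ (for $|z|\leq 1$ the weight $(1+|s|)^\alpha$ is of order $1$, so the remainder integrates trivially). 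Alternatively one can just use continuity of $\partial_z^2\Lambda$ on the compact set $[-|h|,|h|]$ and the bound $|\Lambda(z)|\leq \tfrac12 z^2 \sup_{[-|h|,|h|]}|\partial_z^2\Lambda|$.

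The second step is to apply this quadratic bound termwise for $k$ sufficiently large so that $2^{-k}|h|\leq 1$: writing $M_h := \sup_{|z|\leq |h|}|\partial_z^2\Lambda(z)|<\infty$, one obtains
\begin{equation*}
2^k\,\bigl|\Lambda(2^{-k}h)\bigr| \;\leq\; 2^k \cdot \tfrac12(2^{-k}h)^2\, M_h \;=\; \tfrac{M_h h^2}{2}\,2^{-k}.
\end{equation*}
Summing a geometric series in $k$ then yields absolute convergence of $\S(\Lambda)(h)$. The finitely many first terms (those with $2^{-k}|h|>1$) are of course finite since $\Lambda$ is continuous, so they pose no issue. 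I do not anticipate any real obstacle here; the whole point of requiring both $\Lambda(0)=0$ and $\partial_z\Lambda(0)=0$ in the hypotheses of the lemma is precisely to match the compensation needed against the divergent factor $2^k$, and the $\mathcal{C}^3$ regularity contained in $\Ea$ provides more than enough smoothness to run Taylor's formula at second order.
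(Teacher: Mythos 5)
Your proof is correct and takes essentially the same route as the paper: a second-order Taylor expansion of $\Lambda$ at the origin, exploiting $\Lambda(0)=\partial_z\Lambda(0)=0$, which converts the divergent factor $2^k$ into a convergent geometric factor $2^{-k}$. The only cosmetic difference is that the paper directly invokes the global bound $\|\partial_z^2\Lambda\|_\infty\leq\|\Lambda\|_\alpha$ (immediate from the definition of $\Ea$, since the weight $(1+|z|)^\alpha\geq 1$), whereas you hedge with a local sup $M_h$ — both work, but the global bound is cleaner and avoids splitting off the finitely many first terms.
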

\begin{proof}
%
We perform a Taylor expansion: there exists $\tilde{h}_k$, such that 
$\Lambda (2^{-k}h) = \frac12 (2^{-k}h)^2  \p_z^2 \Lambda(2^{-k} \tilde{h_k})$. Therefore, we have immediately
\begin{equation*}
\ds \abs {\sum_{k \geq 0} 2^k \Lambda  (2^{-k}h) }   \leq \left (  h^2 \sum_{k \geq 0} 2^{-k} \right ) \left\| \p_z^2 \Lambda\right \|_{\infty} < \infty.
\end{equation*}
\end{proof}
One can now proceed to the proof of the finiteness of the sum of $\H$ in \cref{def:H}.
\begin{proof}[Proof of \cref{finiteness Heps}]
Let $K$ be the ball of $\Eaz$ of radius $\nalph{K}$ and take $V \in K$, $z \in \R$. To use the previous lemma, we first notice the  identity by definition: 
\begin{equation}\label{eq:relation HVS}
\H(V) = \S(\Gamma_\e).
\end{equation}
There are two conditions to verify in order to apply \cref{existence sums}:
\begin{align*}
\p_z  \Gamma_\e  (0)= 0, \text{ and } \Gamma_\e \in \Ea.
\end{align*}
Those properties are verified thanks to \cref{cond log Ieps+m Eaz,cond log Ieps Eaz}. The \cref{finiteness Heps} immediately follows.
\end{proof}

So far, we 
have not used the algebraic decay condition which is part of the definition of $\Ea$. In the following lemma, we refine the estimate on $\S(\Lambda)\in \Ea$.  
This foreshadows the same result for the function $\H(V)$, as stated in the next section.
\begin{lem}[Better control of the series]\label{control sums}$ $
\\
Assume that $\Lambda\in \Ea$, that $\p_z \Lambda(0) = 0$, and that $(1+|z|)^\alpha\partial_z \Lambda\in L^\infty$. Then, $\S(\Lambda)$ belongs to $\Ea$, with a uniform estimate:
\begin{equation}\label{eq:control S alpha}
\nalph{  \S(\Lambda)} \leq C\max \left ( \|\Lambda\|_\alpha, \sup_{z\in \R}\ (1+|z|)^\alpha|\partial_z \Lambda(z)| \right )
\end{equation}
\end{lem}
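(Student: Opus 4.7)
The strategy is to differentiate the series $\S(\Lambda)(h) = \sum_{k\geq 0} 2^k \Lambda(2^{-k}h)$ termwise, which yields
\[
\p_h \S(\Lambda)(h) = \sum_{k\geq 0} \p_z \Lambda(2^{-k}h), \quad \p_h^j \S(\Lambda)(h) = \sum_{k\geq 0} 2^{k(1-j)} \p_z^j \Lambda(2^{-k}h)\text{ for }j=2,3,
\]
and then to estimate each series separately. Once termwise absolute convergence of all three series is established uniformly on compacts, standard results justify the formal differentiation a posteriori. Denote for brevity $M_0 = \|\Lambda\|_\alpha$ and $M_1 = \sup_{z}(1+|z|)^\alpha |\p_z \Lambda(z)|$, so the desired bound is $\nalph{\S(\Lambda)} \leq C \max(M_0, M_1)$.

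For the first derivative I would use the hypothesis $\p_z\Lambda(0)=0$ in the following way: by a first-order Taylor expansion, $|\p_z\Lambda(2^{-k}h)| \leq 2^{-k}|h| \|\p_z^2\Lambda\|_\infty \leq 2^{-k}|h| M_0$, while the weighted bound gives $|\p_z\Lambda(2^{-k}h)| \leq M_1 (1+2^{-k}|h|)^{-\alpha}$. Split the sum at $k_0 \simeq \log_2|h|$ (for $|h|>1$): use the decay bound $M_1(2^{-k}|h|)^{-\alpha}=M_1 2^{k\alpha}|h|^{-\alpha}$ for $k\leq k_0$, whose geometric sum is comparable to $M_1|h|^{-\alpha}\cdot 2^{k_0\alpha}\simeq M_1$, and the Taylor bound $M_0 2^{-k}|h|$ for $k>k_0$, whose geometric tail is comparable to $M_0 \cdot 2^{-k_0}|h|\simeq M_0$. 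The case $|h|\leq 1$ is handled by the Taylor bound only. This yields $\|\p_h \S(\Lambda)\|_\infty \leq C\max(M_0,M_1)$.

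For the weighted second and third derivatives, only the weighted bound $|\p_z^j\Lambda(2^{-k}h)| \leq M_0(1+2^{-k}|h|)^{-\alpha}$ is needed. For $|h|>1$, split again at $k_0\simeq \log_2|h|$. On the range $k \leq k_0$, one has $(1+|h|)^\alpha (1+2^{-k}|h|)^{-\alpha} \leq C\, 2^{k\alpha}$, so the contribution is bounded by $C M_0 \sum_{k\leq k_0} 2^{k(1-j+\alpha)}$, a convergent geometric series provided $\alpha < j-1$; since $j\in\{2,3\}$ and $\alpha \leq 2/5 < 1$, this holds. On the range $k>k_0$, one controls $(1+|h|)^\alpha \cdot 2^{k(1-j)} \leq |h|^\alpha \cdot 2^{k(1-j)}$, whose tail sum is $\simeq |h|^{\alpha+1-j}$, bounded for $|h|>1$ since $\alpha < j-1$. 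The case $|h|\leq 1$ is trivial because the weight is $O(1)$ and the geometric factor $2^{k(1-j)}$ provides summability on its own.

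The main (mild) technical obstacle is a uniform handling of the split-point argument: one must verify that the choice $k_0\simeq \log_2|h|$ really yields bounds that are uniform for $h\in\R$, which amounts to careful bookkeeping of constants depending only on $\alpha$. No new assumption on $\Lambda$ is needed beyond the three listed ($\Lambda \in \Ea$, $\p_z\Lambda(0)=0$, and the weighted bound on $\p_z\Lambda$); in particular the separate role of $M_1$ appears \emph{only} in the first-derivative estimate, while the higher-derivative estimates proceed solely from $M_0$. Combining the three estimates yields \eqref{eq:control S alpha}, and checking that $\S(\Lambda)(0)=0$ is immediate from $\Lambda(0)=0$ (which holds since $\Lambda\in\Ea$).
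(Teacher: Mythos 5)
Your proposal is correct and follows essentially the same strategy as the paper: differentiate the series termwise, then for the first derivative split the sum at $N_h \simeq \log_2|h|$, using $\p_z\Lambda(0)=0$ and a Taylor expansion for the tail $k>N_h$ and the weighted bound $(1+|z|)^\alpha|\p_z\Lambda|\in L^\infty$ for the head $k\leq N_h$, which is exactly where the restriction $\alpha>0$ enters. The only (cosmetic) divergence is in the second and third derivatives: the paper dispenses with any split there, using the uniform inequality $(1+|h|)^\alpha/(1+2^{-k}|h|)^\alpha \leq 2^{k\alpha}$ for all $k,h$ and summing the geometric series $\sum 2^{k(\alpha-j+1)}$ directly, whereas you re-split at $k_0$; both yield the same bound, the paper's version is just slightly tighter bookkeeping.
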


There is some subtlety hidden here. In fact, we were not able to propagate the algebraic decay at first order from $\Lambda$ to $\S(\Lambda)$. What saves the day is that  we gain some algebraic decay of the first order derivatives somewhere in our procedure (see {\em e.g.} Proposition \ref{unif boundsW}).

\begin{proof}
Recall the notation $\vphia (h)= (1 + \abs{h})^\alpha$. We begin with the uniform bound on the first derivative, which is the main reason why we have to impose algebraic decay in our functional spaces.\medskip

\noindent$\triangleright$ \textbf{Step 1: $ \p_z \S(\Lambda)$ is uniformly bounded.}
We split the sum  in two parts. Let $h \in \R$, and let $N_h \in \N$  be the lowest integer such that $|h| \leq 2^{N_h}$.
We consider the two regimes: $k> N_h$ and $k \leq  N_h$. In the former regime, a simple Taylor expansion yields 
\begin{equation}
\ds \abs{\sum_{k > N_h} \p_z \Lambda (2^{-k}h)} \leq  \ds \sum_{k > N_h} 2^{-k}| h |  \left \|\p_z^2 \Lambda \right \|_{\infty} \leq   \left \|\p_z^2 \Lambda \right \|_{\infty} ,
\end{equation} 
by definition of $N_h$.
In the regime  $k \leq N_h $,  we use the algebraic decay which is encoded in the space $\Ea$. If $|h|>1$, we have $N_h\geq 1$, and
\begin{multline*}
\ds \left | \sum_{k\leq N_h} \p_z  \Lambda (2^{-k}h)\right| \leq \ds \sum_{k\leq N_h } \frac{ \|\vphia \p_z \Lambda \|_{\infty}} {(1+2^{-k}|h|)^\alpha}\\ 
\leq \left (  \ds  \sum_{k\leq N_h } \dfrac{2^{k \alpha}}{|h|^\alpha} \right ) \|\vphia \p_z \Lambda \|_{\infty} = \left (  \dfrac{1}{|h|^\alpha} \dfrac{2^{(N_h+1) \alpha }-1}{2^\alpha-1} \right ) \|\vphia \p_z \Lambda \|_{\infty}.
\end{multline*}
By definition of $N_h$, we have $2^{N_h-1} < |h|$, so that the right-hand-side above is bounded by a constant that get arbitrarily large as $\alpha\to 0$ (hence, the restriction on $\alpha>0$):
\begin{equation}\label{eq:alpha collapses}
\ds \abs{ \sum_{k\leq N_h} \p_z \Lambda (2^{-k}h) } \leq \left ( \dfrac{4^\alpha}{2^\alpha - 1} \right ) \|\vphia \p_z \Lambda \|_{\infty}.
\end{equation}
The case $|h|\leq 1$ is trivial as the sum is reduced to a single term $\p_z  \Lambda (h)$ since $N_h = 0$.
\medskip

\noindent$\triangleright$ \textbf{Step 2: $ \vphia\abs{ \p_z^2 \S(\Lambda)}  $ is uniformly bounded.} This bound and the next one are easier.  
For any $h \in \R $, we have
\begin{equation*}
\ds \vphia(h) \abs{ \sum_{k\geq 0} 2^{-k}  \p_z^2 \Lambda (2^{-k}h) } \leq  \left ( \ds \sum_{k \geq 0} 2^{-k}   \dfrac{\vphia(h)}{\vphia(2^{-k}h)}\right ) \|\vphia \p_z^2 \Lambda \|_{\infty} .
\end{equation*}
Since $1\geq 2^{-k}$, one obtains
\begin{equation*}
\ds \vphia(h) \abs{ \sum_{k\geq 0} 2^{-k}  \p_z^2 \Lambda (2^{-k}h) }  \leq \left(\sum_{k \geq 0}  2^{k (\alpha-1)} \right) \|\vphia \p_z^2 \Lambda \|_{\infty} = \left ( \dfrac{2}{2 - 2^\alpha} \right ) \|\vphia \p_z^2 \Lambda \|_{\infty}. 
\end{equation*}
The latter sum is finite since $\alpha<1$. 
\medskip

\noindent$\triangleright$ \textbf{Step 3: $ \vphia\abs{ \p_z^3 \S(\Lambda)}  $ is uniformly bounded.} The proof is similar to the previous argument. 
\end{proof}

\subsection{Contraction properties (second part)}\label{sec cotraction properties}
In this section we  prove that $\H$ stabilizes some subset of $\Ea_0$. We first show that $\H$ maps balls into balls with incremental radius that do not depends on the initial ball (\cref{invariant subsets}). This property immediately implies the existence of an invariant subset for $\H$ (\cref{cor invariant subset}). Finally, we prove that the mapping $\H$ is a  contraction mapping for $\epsilon$ small enough (\cref{contraction mapping}). 
To completely justify the definition of $\H$, it remains to show that $\H(V) \in \Eaz$. We begin with the lower bound on the second derivative, which is for free.
\begin{lem}[Lower bound on $\p_z^2 \H(V)(0)$]\label{lowerbound p2HV}$ $
\\
For every ball $K\subset \Eaz$, there exists  $\e_K$  such that for any $\e \leq \e_K$, and $V \in K$, we have:
\begin{align*}
\p_z \H(V)(0) = 0, \quad \p_z^2 \H(V)(0) \geq \p_z^2 m(0).
\end{align*}
\end{lem}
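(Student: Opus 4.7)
The plan is to exploit the identity $\H(V) = \S(\Gamma_\e)$ from \eqref{eq:relation HVS}, with $\Gamma_\e = \Gammam - \GammaI$, and to perform a Taylor expansion of $\Gamma_\e$ at the origin. By Lemmas \ref{cond log Ieps+m Eaz} and \ref{cond log Ieps Eaz}, the increment $\Gamma_\e$ lies in $\Ea$ (so in particular $\Gamma_\e(0)=0$) and satisfies $\p_z \Gamma_\e(0) = 0$: indeed one computes $\p_z \Gammam(0) = \p_z m(0)/\I_\e(V)(0) = 0$ since $\p_z m(0)=0$, and $\p_z \GammaI(0) = W_\e^{(1)}(V)(0) = 0$ is precisely the defining equation of $\gamma_\e(V)$, as already noted in the proof of Lemma \ref{cond log Ieps Eaz}.

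A third-order Taylor expansion at $0$ therefore gives, uniformly in $k$,
\begin{equation*}
2^k \Gamma_\e(2^{-k}h) = \frac{h^2}{2 \cdot 2^k} \p_z^2 \Gamma_\e(0) + O\!\left(2^{-2k}|h|^3\right),
\end{equation*}
the remainder being controlled by $\|\p_z^3 \Gamma_\e\|_\infty$, which is finite since $\Gamma_\e \in \Ea$. Summing the geometric series $\sum_{k\ge 0} 2^{-k} = 2$ then yields
\begin{equation*}
\H(V)(h) = h^2 \,\p_z^2 \Gamma_\e(0) + O\!\left(|h|^3\right) \quad \text{as } h \to 0,
\end{equation*}
from which one reads off at once $\p_h \H(V)(0) = 0$ and $\p_h^2 \H(V)(0) = 2\,\p_z^2 \Gamma_\e(0)$.

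It remains to evaluate $\p_z^2 \Gamma_\e(0)$. A direct differentiation, using $m(0)=\p_z m(0)=0$, gives $\p_z^2 \Gammam(0) = \p_z^2 m(0)/\I_\e(V)(0)$. Combining \eqref{eq:pz2 gammaI} with $W_\e^{(1)}(V)(0)=0$ yields $\p_z^2 \GammaI(0) = W_\e^{(2)}(V)(0)$. Therefore
\begin{equation*}
\p_h^2 \H(V)(0) \;=\; \frac{2\,\p_z^2 m(0)}{\I_\e(V)(0)} - 2\, W_\e^{(2)}(V)(0) \;=\; 2\,\p_z^2 m(0) + O(\e),
\end{equation*}
where the final identity uses Proposition \ref{estim Ieps} to get $\I_\e(V)(0) = 1 + O(\e)$ uniformly for $V \in K$, and Proposition \ref{unif boundsW} to control $|W_\e^{(2)}(V)(0)| \le \e C_K \nalph{V}$. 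Since $\p_z^2 m(0) \ge \mu_0 > 0$, for $\e \le \e_K$ small enough the right-hand side is bounded below by $\p_z^2 m(0)$, as required.

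There is no genuine obstacle once the machinery of Sections \ref{sec existunique gammaeps} and \ref{sec prop Ieps} is available; the key structural point is the ``doubling'' factor $\sum_k 2^{-k}=2$, which leaves a comfortable margin above $\p_z^2 m(0)$ to absorb the $O(\e)$ perturbations coming from $\I_\e(V)(0)\neq 1$ and $W_\e^{(2)}(V)(0)\neq 0$.
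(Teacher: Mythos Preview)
Your proof is correct and follows essentially the same route as the paper: both compute $\p_z^2 \H(V)(0) = 2\,\p_z^2\Gamma_\e(0) = 2\p_z^2 m(0)/\I_\e(V)(0) - 2W_\e^{(2)}(V)(0)$ via the geometric factor $\sum_k 2^{-k}=2$, then absorb the perturbative terms for small $\e$ (the paper differentiates the series term-by-term and evaluates at $0$, whereas you Taylor-expand each summand and read off the coefficients, which amounts to the same computation). One minor point: \cref{estim Ieps} as stated only gives $\I_\e(V)(0)\to 1$ uniformly rather than the rate $1+O(\e)$ you invoke, but either suffices here since $2\p_z^2 m(0)>\p_z^2 m(0)$ with room to spare; the paper itself simply uses $\I_\e(V)(0)\le 4/3$ from \cref{estim Ieps} with $\delta=1/3$.
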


\begin{proof}
The identity $\p_z \H(V)(0) = 0$, and more particularly $\p_z \GammaI(0) = 0$ is a consequence of the choice of $\gamma_e(V)$ in \cref{existunique gammaeps}. Indeed, we have, by \eqref{eq:relation HVS},
\begin{align*}
\p_z \H(V)(0)  = \sum_{k \geq 0} \p_z \Gamma_\e(V)(0) = 0.
\end{align*}

For the second estimate, a simple computation yields,  using $m(0)= \p_z m(0) = 0$:
\begin{equation*}
\ds \p^2_z \H(V) (0) = \sum_{k \geq 0} 2^{-k} \left[\dfrac{\p^2_z m(0)}{\I_\e(V)(0)} -	W_\e^{(2)}(V)(0)-W_\e^{(1)}(V)(0)^2 \right]
\end{equation*}
But since $V \in \Eaz$, one can use again the uniform estimates of  \cref{unif boundsW} to write that for $\e \leq \e_K$, that depends only on the ball $K$ :
\begin{equation}\label{eq:calc_d2Veps_int}
\ds \p^2_z \H(V) =  2 \dfrac{\p^2_z m(0)}{\I_\e(V)(0)}  +  O(\e),
\end{equation}
where $ O(\e)$ that depends only on the ball $K$. Then, we use \cref{estim Ieps} with $\delta = 1/3$ to deduce that for $\e$ small enough, we have $\I_\e( V) \leq {4}/{3}$. Then \eqref{eq:calc_d2Veps_int} can be simplified into 
\begin{equation*}
\ds \p^2_z \H(V)(0) \geq   \dfrac{3 \p^2_z m(0)}{ 2 } + O(\e).
\end{equation*}
Recall that $\p_z^2m(0)>0$ by assumption. Therefore, for $\e$ small enough, we get as claimed
\begin{equation*}
\ds \p^2_z \H(V)(0) \geq    \p^2_z m(0).
\end{equation*}
\end{proof}
\begin{remark}Considering the proof, another way to interpret the result is that automatically for any function $V \in \Ea$ such that $\p_z V(0) = 0$, the function $\H$ prescribes a lower bound on $\p_z^2 V(0)$. Since we are seeking a  fixed point $\H(V)= V$, we may as well put this condition in the subspace $\Eaz$ without loss of generality. 
\end{remark}

Finally, we can establish a first useful estimate on $\nalph{\H(V)}$, showing more than just its finiteness:
\begin{prop}[Contraction in the large]\label[prop]{invariant subsets}$ $
\\
For every ball $K \in \Eaz$, there exists an explicit constant $\kappa(\alpha)<1$, as well as $C_m$, $C_K$ and $\e_K$ that depend only on $K$ such that, for all $\e \leq \e_K$, and for every $V \in K$, 
\begin{align}\label{eq:bound nalphHeps}
\nalph{\H(V)} \leq C_m + (\kappa(\alpha)+\e^\alpha C_K)\nalph{V}.
\end{align}
\end{prop}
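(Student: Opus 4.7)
The plan is to exploit the decomposition $\H(V) = \S(\Gamma_\e^m) - \S(\GammaI)$ already introduced in Section~\ref{sec finiteness H}, and to apply the summation estimate of Lemma~\ref{control sums} separately to each piece. The two pieces play very different roles: $\S(\Gamma_\e^m)$ is a source term controlled uniformly by $m$ alone (since $\log(1+m)\in \Eaz$ by \eqref{cond logm inEaz}), whereas $\S(\GammaI)$ is the perturbation of the argument $V$ that must carry a contraction factor $\kappa(\alpha)<1$.

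For the source piece, Proposition~\ref{estim Ieps} gives $\I_\e(V)(0)\to 1$ uniformly on $K$, so $\Gamma_\e^m = \log\bigl(1+m/\I_\e(V)(0)\bigr)$ is a small $\mathcal C^3$-perturbation of $\log(1+m)$. The condition \eqref{eq:log m} is exactly what is needed to bound both $\|\log(1+m)\|_\alpha$ and the weighted first derivative $\sup_z(1+|z|)^\alpha|\p_z\log(1+m)|$. Applying Lemma~\ref{control sums} then yields $\nalph{\S(\Gamma_\e^m)}\leq C_m$, for some constant $C_m$ depending only on $m$, provided $\e\leq \e_K$ is small enough.

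For the perturbation piece, formulas \eqref{eq:pz gammaI}--\eqref{eq:pz3 gammaI} together with Propositions~\ref{unif boundsDeltaW}--\ref{unif boundsW} give, for $V\in K$ and $\e\leq \e_K$:
\begin{align*}
\|\vphia\, \p_z \GammaI\|_\infty &\leq \e\, C_K\nalph{V},\\
\|\vphia\, \p_z^2 \GammaI\|_\infty &\leq \e\, C_K\nalph{V} + \e^2 C_K\nalph{V}^2,\\
\|\vphia\, \p_z^3 \GammaI\|_\infty &\leq (2^{\alpha-1}+\e^\alpha C_K)\nalph{V} + \e^2 C_K\bigl(\nalph{V}^2+\nalph{V}^3\bigr).
\end{align*}
Using the a priori bound $\nalph{V}\leq \nalph{K}$ throughout, the quadratic and cubic cross terms coming from $3W_\e^{(1)}W_\e^{(2)}+2(W_\e^{(1)})^3$ are absorbed into $\e^\alpha C_K\nalph{V}$ at the cost of enlarging $C_K$. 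The only contribution that does not get a factor $\e$ is the third derivative with its leading constant $2^{\alpha-1}$.

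Finally, one runs these bounds through the summation operator $\S$. A careful reading of the proof of Lemma~\ref{control sums} (Step~3) shows that $\|\vphia\, \p_z^3 \S(\Lambda)\|_\infty \leq \frac{1}{1-2^{\alpha-2}}\|\vphia\, \p_z^3 \Lambda\|_\infty$, while the first and second derivative contributions remain of order $\e$ when applied to $-\GammaI$. Setting
\[
\kappa(\alpha) := \frac{2^{\alpha-1}}{1-2^{\alpha-2}},
\]
one checks that $\kappa(\alpha)<1$ is equivalent to $3\cdot 2^{\alpha-2}<1$, i.e.\ $\alpha<2-\log_2 3\approx 0.415$, which is satisfied by $\alpha=2/5$. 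This yields $\nalph{\S(\GammaI)}\leq (\kappa(\alpha)+\e^\alpha C_K)\nalph{V}$, and summing the two contributions gives the desired bound \eqref{eq:bound nalphHeps}.

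The main obstacle is bookkeeping: one must propagate the algebraic weight $(1+|z|)^\alpha$ through each derivative, cleanly separate the linear leading part from the nonlinear cross terms, and verify that the constant $\kappa(\alpha)=2^{\alpha-1}/(1-2^{\alpha-2})$ is genuinely below $1$ at $\alpha=2/5$. This last point explains why the threshold appearing in Definition~\ref{def:Ea} is not merely cosmetic: it is precisely what makes the third-derivative term, where contraction must be won, compatible with the summation operator~$\S$.
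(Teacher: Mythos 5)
Your proposal is correct and takes essentially the same route as the paper: the same decomposition $\H(V) = \S(\Gammam) - \S(\GammaI)$, the same reliance on Proposition~\ref{unif boundsW} to isolate the $W_\e^{(3)}$ term as the only contribution without an $\e$ factor, and the same geometric series $\sum_{k\geq 0}2^{k(\alpha-2)} = (1-2^{\alpha-2})^{-1}$ through the weighted summation operator. Your $\kappa(\alpha) = 2^{\alpha-1}/(1-2^{\alpha-2})$ is algebraically identical to the paper's $2^{\alpha+1}/(4-2^\alpha)$ (multiply numerator and denominator by $4$), and the threshold $\alpha<2-\log_2 3$ you derive matches exactly.
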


\begin{proof}
Let $K$ be the ball of $\Eaz$, and take  $V \in K$. For clarity we write respectively $\I_\e(h) $ and $W_\e^{(i)}(h)$ instead of $\I_\e(V) (h)$ and $W_\e^{(i)}(V) (h)$. 
Combining various estimates derived in Section \ref{sec finiteness H}, and particularly Lemma \ref{control sums} together with \cref{cond log Ieps+m Eaz,cond log Ieps Eaz}, we find that $ \H(V) = \S(\Gamma_\e) = \S(\Gammam) - \S(\GammaI)$ belongs to $\Ea$. However, the associated estimate \eqref{eq:control S alpha} is not satisfactory, at least for the $\S(\GammaI)$ and we need to re-examine the dependency of the constants upon $\e$ and $\alpha$.

The first and second derivatives of $\GammaI$ involve $W_\e^{(1)}$ and $W_\e^{(2)}$ which are both of order $\e C_K \nalph{V}$  thanks to \cref{unif boundsW}. Back to the proof of \cref{control sums}, the quantities $\|\vphia\p_z \GammaI\|_{\infty} $ and $\|\vphia\p_z^2 \GammaI\|_{\infty} $ are in fact  of order  $\e \nalph{\Lambda}$, and so are $\| \p_z\S(\GammaI)\|_{\infty} $ and $\| \vphia\p_z^2\S(\GammaI)\|_{\infty} $.

This cannot be extended readily to the third derivative as we lose the order $\e$ at this stage. However, \cref{unif boundsW} provides an explicit constant that is going to be used. From \eqref{eq:pz3 gammaI}, we have:
\begin{equation*}
\ds  \p^3_z \S \left( \GammaI \right)(h) =   \ds \sum_{k \geq 0}  4^{-k} \left[W_\e^{(3)}(2^{-k}h)+3W_\e^{(2)}(2^{-k}h)W_\e^{(1)}(2^{-k}h)+2W_\e^{(1)}(2^{-k}h)^3 \right]. 
\end{equation*}
The contributions involving  $W^{(1)}_\e$ and $W^{(2)}_\e$ are of order $\e$, and can be handled exactly as above. However, the linear term involving  $W_\e^{(3)}$ requires a careful attention. We obtain from \cref{unif boundsW} that $\vphia W_\e^{(3)}$ is bounded uniformly by $\left( 2^{\alpha-1} + \e^\alpha C_K \right) \nalph{V}$. Therefore, 
\begin{multline}\label{eq:contraction kappa}
\vphia(h) \abs{  \sum_{k\geq 0}  4^{-k}W^{(3)}_\e(2^{-k}h) } 
\leq \left( 2^{\alpha-1} + \e^\alpha C_K \right)     \left (  \sum_{k \geq 0} 4^{-k} \dfrac{\vphia(h)}{\vphia(2^{-k}h)} \right ) \nalph{V} \\
\leq 
\left( 2^{\alpha-1} + \e^\alpha C_K \right)     \left (  \sum_{k \geq 0}   2^{k(\alpha - 2)}   \right ) \nalph{V} = 
\left(  \dfrac{2^{\alpha+1}}{4 - 2^\alpha} + \e^\alpha C_K \right) \nalph{V} . 
\end{multline}
In view of the latter estimate, we define the explicit constant $\kappa(\alpha)$ as
\begin{equation}
\kappa(\alpha) = \dfrac{ 2^{1+\alpha}}{4-2^\alpha} .
\end{equation}
A simple calculation shows that $\kappa(\alpha)<1$ if and only if $\alpha < 2 - \log_2(3) \approx 0.415$. 
The choice of $\alpha \leq 2/5$ gives some room below this threshold. We conclude that $\nalph{\S(\GammaI)}\leq (\kappa(\alpha)+\e^\alpha C_K)\nalph{V}$.

The other contribution to $\H(V)$, namely $\S(\Gammam)$ can be bounded in an easier way. Indeed, we have  
\begin{equation}\label{eq:gamma m decomposed}
\Gammam  = \log(1 + m) + \log \left( 1 + \dfrac{m}{\I_\e(0)} \right) - \log(1 + m) = \log(1 + m) + \log \left( 1 + \dfrac{m}{1 + m}\left ( \dfrac1{\I_\e(0)} - 1\right )\right). 
\end{equation}
We define accordingly 
\begin{equation}
C_m = \max_{k = 1,2,3}  \left (  \left \|\vphia \dfrac{ \p_z^{k} m}{1 + m} \right \|_{\infty} \right )\, , 
\end{equation}
Moreover, \cref{estim Ieps} can be easily refined into $|\I_\e(0) - 1|\leq \e C_m \|V\|_\alpha$, using the definition of $R_K$ in \eqref{def G+G-}. Straightforward computations show that the last contribution in \eqref{eq:gamma m decomposed} can be estimated by $\e C_m \nalph{V}$. 

Combining the estimates obtained for  $\S(\Gammam)$ and $\S(\GammaI)$, we come to the conclusion:

\begin{align*}
\nalph{\H(V)} \leq C_m + (\kappa(\alpha) + \e^\alpha C_K )\nalph{V},
\end{align*}
\end{proof}

\cref{invariant subsets} calls an immediate corollary.
\begin{cor}[Invariant subset]\label{cor invariant subset}$ $
\\
There exist $K_0$ a  ball of $\Eaz$,  and $\e_0 $ a positive constant
such that for all $\e \leq \e_0$ the set $K_0$ is invariant by $\H$: 
\begin{align*}
\H(K_0) \subset K_0.
\end{align*}
\end{cor}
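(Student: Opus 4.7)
The plan is to apply \cref{invariant subsets} with a specific large ball, and then pick $\e_0$ small enough to absorb the error term $\e^\alpha C_K$. Concretely, since the threshold $\kappa(\alpha)<1$ is fixed (recall $\alpha\le 2/5$), I would first choose the radius
\[
R_0 \;=\; \frac{2\,C_m}{1-\kappa(\alpha)}\, ,
\]
and let $K_0$ denote the closed ball of $\Eaz$ of radius $R_0$ (which is well defined since $\Eaz$ is stable under the defining inequalities \eqref{eq:E0}, and $R_0$ depends only on $\alpha$ and on the function $m$ through the constant $C_m$).

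Next, with $K_0$ thus fixed, the constants $\e_{K_0}$ and $C_{K_0}$ appearing in \cref{invariant subsets} are determined. I would then choose
\[
\e_0 \;=\; \min\!\left(\e_{K_0},\; \left(\tfrac{1-\kappa(\alpha)}{2\,C_{K_0}}\right)^{1/\alpha}\right),
\]
so that for every $\e\leq \e_0$ and every $V\in K_0$, the bound \eqref{eq:bound nalphHeps} gives
\[
\nalph{\H(V)} \;\leq\; C_m + \Big(\kappa(\alpha)+\tfrac{1-\kappa(\alpha)}{2}\Big) R_0 \;=\; C_m + \tfrac{1+\kappa(\alpha)}{2}\, R_0 \;=\; R_0,
\]
by the choice of $R_0$. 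This shows that $\H(V)$ lies in the ball of radius $R_0$ of $\Ea$.

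Finally, I would check that $\H(V)$ truly belongs to $\Eaz$, not just to $\Ea$. The normalization $\H(V)(0)=0$ is immediate from \eqref{eq:defH} since each summand vanishes at $h=0$ (using $m(0)=0$). The remaining two conditions, namely $\p_z\H(V)(0)=0$ and $\p_z^2\H(V)(0)\ge \p_z^2 m(0)\ge \mu_0\Id$, are exactly the content of \cref{lowerbound p2HV}, whose hypotheses are satisfied since $V\in K_0\subset \Eaz$ and $\e\leq \e_0\leq \e_{K_0}$. Combining these facts yields $\H(V)\in K_0$, i.e.\ $\H(K_0)\subset K_0$.

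The only subtle point here is the order in which constants are chosen: the bound in \cref{invariant subsets} has a constant $C_K$ that depends on the ball, so one must first fix $R_0$ purely from $C_m$ and $\kappa(\alpha)$ (which is possible precisely because $\kappa(\alpha)<1$), and only then choose $\e_0$ in terms of the resulting $C_{K_0}$. I do not expect any other obstacle, as every ingredient has been prepared by the preceding sections.
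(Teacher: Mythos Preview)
Your proof is correct and follows essentially the same route as the paper: the identical choice $R_0 = 2C_m/(1-\kappa(\alpha))$ and the same threshold $\e_0 = \left((1-\kappa(\alpha))/(2C_{K_0})\right)^{1/\alpha}$ (you additionally, and correctly, intersect with $\e_{K_0}$). Your explicit verification that $\H(V)\in\Eaz$ via \cref{lowerbound p2HV} is a welcome addition that the paper's own proof of this corollary leaves implicit.
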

\begin{proof}
Let $K_0$ be the ball of radius $R_0 = 2C_m/ (1 - \kappa(\alpha))$. We deduce from \cref{invariant subsets} that, for all $V\in K_0$,  
\begin{align*}
\nalph{\H(V)} \leq C_m + (\kappa(\alpha) + \e^\alpha C_{K_0} )R_0 &= C_m \left ( 1 + \dfrac{2\kappa(\alpha)}{1 - \kappa(\alpha)} \right ) + \e^{\alpha} C_{K_0} R_0 \\
&= C_m \left ( \dfrac{2}{1 - \kappa(\alpha)} - 1\right ) + \e^{\alpha} C_{K_0} R_0 \\
& = R_0 + C_m \left (  -1 + \dfrac{2\e^{\alpha} C_{K_0}}{1 - \kappa(\alpha)} \right ).
\end{align*}
Therefore, the choice
$\e_0  = \left( \dfrac{1-\kappa(\alpha)}{2 C_{K_0}} \right)^\frac1\alpha $ guarantees that $K_0$ is left invariant by $\H$.
\end{proof}

We are now in position to state the more important result of this section:
\begin{thm}[Contraction mapping]\label{contraction mapping}$ $
There exists a constant $C_{K_0}$ such that for any $\e\leq \e_0$, and every function $V_1$, $V_2 \in K_0$, the following estimate holds true
\begin{equation}\label{eq: contraction th}
\nalph{ \H(V_1)- \H(V_2)} \leq (\kappa(\alpha) + \e^\alpha C_K)\nalph{V_1-V_2}.
\end{equation}
\end{thm}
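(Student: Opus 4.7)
The strategy mirrors closely that of \cref{invariant subsets}, only now working with differences. Write $\mathcal{H}(V_i) = \mathcal{S}(\Gamma_\e^m(V_i)) - \mathcal{S}(\Gamma_\e^I(V_i))$ for $i=1,2$. Since $\mathcal{S}$ is linear, it suffices to control separately the two differences
\[
\mathcal{S}\bigl(\Gamma_\e^m(V_1) - \Gamma_\e^m(V_2)\bigr) \quad\text{and}\quad \mathcal{S}\bigl(\Gamma_\e^I(V_1) - \Gamma_\e^I(V_2)\bigr)\, .
\]
For the $\Gamma_\e^m$ contribution, note that this term depends on $V$ only through the scalar $\mathcal{I}_\e(V)(0)$. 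Using the decomposition \eqref{eq:gamma m decomposed} and the refined estimate $|\mathcal{I}_\e(V_1)(0) - \mathcal{I}_\e(V_2)(0)| \leq \e C_K \|V_1 - V_2\|_\alpha$ from \cref{cont Ieps}, together with the assumption \eqref{eq:log m} on $m$, one checks that $\|\Gamma_\e^m(V_1) - \Gamma_\e^m(V_2)\|_\alpha$ and $\|\varphi_\alpha \partial_z(\Gamma_\e^m(V_1) - \Gamma_\e^m(V_2))\|_\infty$ are of order $\e C_m \|V_1 - V_2\|_\alpha$. Applying \cref{control sums} then gives the bound $\e C_m \|V_1 - V_2\|_\alpha$ on $\|\mathcal{S}(\Gamma_\e^m(V_1) - \Gamma_\e^m(V_2))\|_\alpha$.

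The $\Gamma_\e^I$ contribution is the delicate one and is where the contraction factor $\kappa(\alpha)$ originates. Using identities \eqref{eq:pz gammaI}--\eqref{eq:pz3 gammaI}, the successive derivatives of $\Gamma_\e^I(V_1) - \Gamma_\e^I(V_2)$ can be expressed as polynomials in $W_\e^{(i)}(V_j)$ and their differences $\triangle W_\e^{(i)}$. Each term containing a $\triangle W_\e^{(1)}$ or $\triangle W_\e^{(2)}$ factor yields an $\e C_K \|V_1 - V_2\|_\alpha$ contribution thanks to \cref{estim Delta 1,estim Delta 2} together with the uniform $O(\e)$ bounds on $W_\e^{(1)}(V_j), W_\e^{(2)}(V_j)$ from \cref{unif boundsW}. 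The only contribution not of order $\e$ comes from the linear term $\triangle W_\e^{(3)}$ in the third derivative, for which \cref{estim Delta 3} produces the bound $(2^{\alpha-1} + \e^\alpha C_K)\|V_1 - V_2\|_\alpha$. Summing against the weights $4^{-k}$ and redoing exactly the computation \eqref{eq:contraction kappa} yields
\[
\left\|\varphi_\alpha \, \partial_z^3 \mathcal{S}\bigl(\Gamma_\e^I(V_1) - \Gamma_\e^I(V_2)\bigr)\right\|_\infty \leq \bigl(\kappa(\alpha) + \e^\alpha C_K\bigr) \|V_1 - V_2\|_\alpha \, .
\]
The lower-order terms (first and second derivatives, and the first derivative's uniform control via the splitting at $N_h$ as in the proof of \cref{control sums}) are all of order $\e C_K \|V_1 - V_2\|_\alpha$ by the same arguments as in Step 1--2 of that lemma, since they correspond to $\triangle W^{(1)}_\e$ and $\triangle W^{(2)}_\e$ which carry an explicit $\e$. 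Combining all the pieces gives \eqref{eq: contraction th}, up to absorbing the various $\e C_K \|V_1 - V_2\|_\alpha$ remainders into the $\e^\alpha C_K \|V_1 - V_2\|_\alpha$ slack (allowed because $\e \leq \e_0 \leq 1$).

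The main technical obstacle is that in the third derivative one cannot afford to trade any smallness for a factor of $\e$: the bound \eqref{estim Delta 3} is sharp up to constants and the only structural contraction must come from the geometric series $\sum 4^{-k} 2^{k\alpha}/|h|^\alpha\cdot |h|^\alpha = \kappa(\alpha)/2$ after the $\varphi_\alpha$ weight is factored in. The fact that $\kappa(\alpha) < 1$ for $\alpha \leq 2/5$, already verified in the proof of \cref{invariant subsets}, is precisely what drives the fixed point argument and was the motivation for introducing the weighted space $\mathcal{E}^\alpha$ in the first place.
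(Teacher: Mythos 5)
Your proof proposal is correct and follows essentially the same route as the paper: the same decomposition $\triangle\mathcal{H}=\mathcal{S}(\triangle\Gamma_\e^m)-\mathcal{S}(\triangle\Gamma_\e^I)$, the same use of the Lipschitz estimate on $\mathcal{I}_\e(0)$ for the $\Gamma_\e^m$ piece, and the same reduction of $\triangle\Gamma_\e^I$ to $\triangle W_\e^{(i)}$ via \eqref{eq:pz gammaI}--\eqref{eq:pz3 gammaI}, with the contraction factor emerging solely from the $\triangle W_\e^{(3)}$ term summed against $4^{-k}$ as in \eqref{eq:contraction kappa}. One small slip in your closing heuristic: the weighted geometric sum $\sum_{k\ge 0}4^{-k}\varphi_\alpha(h)/\varphi_\alpha(2^{-k}h)\le \sum_{k\ge 0}2^{k(\alpha-2)}=4/(4-2^\alpha)$ equals $2^{1-\alpha}\kappa(\alpha)$, not $\kappa(\alpha)/2$; multiplying by the prefactor $2^{\alpha-1}$ from \eqref{estim Delta 3} then recovers $\kappa(\alpha)$, as you correctly invoke via \eqref{eq:contraction kappa}.
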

\begin{proof}
We denote by  $\triangle V$ the difference $V_1 - V_2$, again. 
The proof is analogous to \cref{invariant subsets}. 
For clarity we write respectively $\I_\e^i(h) $ instead of $\I_\e( V_i ) (h)$ and  $\triangle W_\e^{(i)}(h)$ instead of  $W_\e^{(i)}(V_1) (h)-W_\e^{(i)}(V_2) (h)$.  
We decompose $ \triangle \H(V)$ as above: 
\begin{equation}
\triangle \H = \triangle \left (  \S(\Gammam) - \S(\GammaI)\right ) = \triangle \H^m  - \triangle \H^I. 
\end{equation}
We deal with $\triangle \H^m$ in the following lemma :
\begin{lem}\label{unif bound affpart diff}
There exists a constant $C_{0}$ such that for any $\e\leq \e_0$, and every function $V_1$, $V_2 \in K_0$, we have 
\begin{align*}
\nalph{\triangle \H^m} \leq \e C_0 \nalph{ \triangle V }.
\end{align*}
\end{lem}

\begin{proof}
Recall the following definition:
\begin{equation}
 \triangle \Gammam = \log\left ( \I_\e^1(0) + m \right ) - \log\left ( \I_\e^2(0) + m \right ) - \log\left (   \dfrac{\I_\e^1(0) }{\I_\e^2(0) } \right ) \, . 
\end{equation}
The first derivative has the following expression,
\begin{equation}\label{eq:triangle gammam}
\p_z  \triangle \Gammam  = -  \dfrac{\p_z m}{(\I_\e^1(0) + m)(\I_\e^2(0) + m)} \triangle \I_\e(0)\, .
\end{equation}
Clearly, $\I_\e^2(0) + m$ is bounded below, uniformly for $\e$ small enough. Therefore, we can repeat the arguments of \cref{control sums}, with $\Lambda = \log(\I_\e^1(0) + m)$ in order to get
\begin{equation}
\|\p_z  \S(\triangle \Gammam)\|_{\infty} \leq 
C_m  |\triangle \I_\e(0)|.
\end{equation}  
However, \cref{cont Ieps} yields that $|\triangle \I_\e(0)| \leq \e C_{0} \nalph{\triangle V}$.

The next order derivatives can be handled similarly. Indeed, the following quantities must be bounded uniformly by $\e C_{0} \nalph{\triangle V}$:
\begin{align*}
\ds  \vphia(h) \abs{ \sum_{k \geq 0} 2^{-k} \left[ \dfrac{\p_z^2 m(2^{-k}h)}{\I_\e^1(0)+m(2^{-k}h)}-\dfrac{\p_z^2 m(2^{-k}h)}{\I_\e^2(0)+m(2^{-k}h)} \right] }& \leq\e C_{0} \nalph{\triangle V}\\
\ds \vphia(h) \ds \abs{\sum_{k \geq 0} 2^{-k} \left[ \dfrac{\p_z m(2^{-k}h)^2}{(\I_\e^1(0)+m(2^{-k}h))^2}- \dfrac{\p_z m(2^{-k}h)^2}{(\I_\e^2(0)+m(2^{-k}h))^2} \right] }&  \leq \e C_{0} \nalph{\triangle V} \\
\ds  \vphia(h) \abs{ \sum_{k \geq 0} 4^{-k} \left[ \dfrac{\p_z^3 m(2^{-k}h)}{\I_\e^1(0)+m(2^{-k}h)}-\dfrac{\p_z^3 m(2^{-k}h)}{\I_\e^2(0)+m(2^{-k}h)} \right] }& \leq\e C_{0} \nalph{\triangle V} \\
\ds \vphia(h)   \abs{\sum_{k \geq 0} 4^{-k}\left [ \dfrac{ \p_z m(2^{-k}h)^3}{(\I_\e^1(0)+m(2^{-k}h))^3} -\dfrac{ \p_z m(2^{-k}h)^3}{(\I_\e^2(0)+m(2^{-k}h))^3}\right ] }&  \leq\e C_{0} \nalph{\triangle V},\\
\ds\vphia(h)  \abs{ \sum_{k \geq 0} 4^{-k}\left [ \dfrac{ \p^2_z m(2^{-k}h)\p_z m(2^{-k}h)}{(\I_\e^1	 (0)+m(2^{-k}h))^2} -\dfrac{  \p^2_z m(2^{-k}h)\p_z m(2^{-k}h)}{(\I_\e^2(0)+m(2^{-k}h))^2}\right ]} & \leq \e C_{0} \nalph{\triangle V}.
\end{align*}
The first and the third items are handled similarly as for the first derivative. The three other items are handled analogously. For the sake of concision, we focus on the second line:
We have,
\begin{multline*}
\dfrac{\p_z m(z)^2}{(\I_\e^1(0)+m(z))^2}- \dfrac{\p_z m(z)^2}{(\I_\e^2(0)+m(z))^2} \\
= \left[ \dfrac{\p_z m(z)}{\I_\e^1(0)+m(z)}+\dfrac{\p_z m(z)}{\I_\e^2(0)+m(z)} \right] \left[ \dfrac{- \p_z m(z) \triangle \I_\e (0) }{(\I_\e^1(0)+m(z))(\I_\e^2(0)+m(z))} \right]
\end{multline*} 
The first factor is uniformly bounded by assumption \eqref{eq:log m}, for $\e$ small enough. The second factor is the same as above, so we can conclude directly. 
\end{proof}

It remains to handle $\triangle \H^I$. We have the following formulas for the two first derivatives  \eqref{eq:pz gammaI}--\eqref{eq:pz3 gammaI}:
\begin{align*}
\ds  \p_z \triangle \H^I (h) & = \sum_{k \geq 0} \triangle W_\e^{(1)}(2^{-k}h), \\
\ds  \p^2_z \triangle \H^I(h) & = \ds   \sum_{k\geq 0} 2^{-k} \left[\triangle W_\e^{(2)}(2^{-k}h)-\triangle \left (  W_\e^{(1)}(2^{-k}h)^2\right )  \right]
\end{align*}
Finally  the formula for the third derivative is:
\begin{equation}\label{eq:delta 3 H}
\ds \p^3_z \triangle \H^I(h) =  \ds    \sum_{k \geq 0}  4^{-k} \left[\triangle W_\e^{(3)}(2^{-k}h)+3 \triangle \left (W_1^{(2)}W_1^{(1)}(2^{-k}h) \right )   +2 \triangle\left (  W_\e^{(1)}(2^{-k}h)^3\right )  \right].	
\end{equation}


The combination of \cref{unif boundsDeltaW} and \cref{control sums} yields 
\begin{equation}
\|\p_z \triangle \H^I\|_{\infty} \leq \e C_0 \nalph{\triangle V}.  
\end{equation}
In the same way, we get the bound for the second derivative, using the factorization 
\begin{equation}
\triangle \left (  W_\e^{(1)}(z)^2\right ) =  \left ( W_\e^{(1)}(V_1)(z) + W_\e^{(1)}(V_2)(z) \right ) \triangle \left (  W_\e^{(1)}(z)\right )\, ,
\end{equation} 
together with the uniform bound in \cref{unif boundsW}. 

As in the proof of \cref{invariant subsets}, the third order derivative must be handled with care, as it does not yield a $ O(\e)$ bound.

Exactly as above, the contribution involving $\triangle W_\e^{(3)}$ in \eqref{eq:delta 3 H} is the one that yields the contraction factor, the remaining part being of order $O(\e) \nalph{\triangle V}$. Actually, we have precisely:

\begin{equation*}
\vphia(h) \ds \abs{ \sum_{k\geq 0}  4^{-k}  \triangle W_\e^{(3)}  (2^{-k}h)}  \leq  \left(  \kappa(\alpha) + \e^\alpha C_0 \right) \nalph{\triangle V} 
\end{equation*}
as in \eqref{eq:contraction kappa}. This concludes the proof of the main contraction estimate.
\end{proof}

\section{Existence of a (locally) unique $U_\e$, and convergence as $\e\to 0$.}

\subsection{Solving problem (\ref{eq:PUeps})  --  \cref{convergence PUeps}(i)}
\label{sec proof reformulation}

First of all, \cref{contraction mapping} immediately implies \cref{existunique fixedpoint}, that is the existence of a unique fixed point $\H(V_\e) = V_\e$ in the invariant subset $K_0$, for $\e \leq \e_0$. Note that $\e_0$ could possibly be reduced to meet the requirement of the last estimate in \eqref{eq: contraction th}.

However, due to the peculiar role played by the linear part $\gamma_\e(V_\e)$, it is convenient to enlarge slightly the set $K_0$. More precisely, after \cref{cor invariant subset} we define $K_0' $ the ball of radius 
\begin{equation}\label{eq:R0'}
R_0' = R_0 + \sup_{V\in K_0}|\gamma_\e(V)|\, .
\end{equation} 
It is clear that, up to reducing further $\e_0$ to $\e_0'$ in order to control the new constant $C_{K_0'}$, the set $K_0'$ is also invariant for $\e\leq \e_0'$. The same contraction estimate as in \cref{contraction mapping} holds, obviously. Furthermore, the fixed point on $K_0'$ coincides with the fixed point on the smaller ball $K_0$, by uniqueness.

Next, we show that finding this fixed point is equivalent to solving problem \eqref{eq:PUeps}, as claimed in \cref{solutionisfixedpoint}. We prove in fact the two sides of the equivalence. 

$\triangleright$ The easy part consists in saying that, being given $V_\e$ the unique fixed point in $K_0$, the function $U_\e = \gamma_\e(V_\e)\cdot + V_\e$ belongs to $K_0'$ by definition of $K_0'$ \eqref{eq:R0'}, and it solves problem \eqref{eq:PUeps} by construction.

$\triangleright$ On the other side, suppose that $(\lambda_\e,U_\e) \in \R \times K_0' $ is a solution of the problem \eqref{eq:PUeps}.
As in \cref{sec reformulation}, evaluating  \eqref{eq:PUeps} at $z=0$ yields the following necessary condition on $\lambda_\e$, since $m(0)=0$:
\begin{equation*}
\lep = \iep{0}.\footnote{ We use the notation $I_\e(U_\e)$ introduced in \cref{eq:def_Ieps_int}, that should not be confused with $\I_\e(V_\e)$. It is the purpose of the present argument to show that the two quantities do coincide.}
\end{equation*}
Then, we focus on $U_\e$. 
We decompose it as $U_\e = \gamma_U\cdot + V_U$, with $\gamma_U = \p_z U_\e(0) $, and $\p_z V_U(0)=0$.
Our purpose is threefold: $(i)$ first, we show that $\gamma_U = \gamma_\e(V_U)$, then $(ii)$ we prove that $V_U \in \Eaz$, and finally $(iii)$, we prove that $\H(V_U)= V_U$.

We can reformulate problem \eqref{eq:PUeps} as follows:
\begin{equation}\label{eq:PFV'}
 I_\e(\gamma_U \cdot + V_U)(0) + m(z) = I_\e(\gamma_U \cdot + V_U)\exp\left (V_U(z)-2V_U\left (\bz\right ) + V_U(0) \right ) .
\end{equation}
Since we assume $U_\e \in \Ea$, we can differentiate the previous equation, and evaluate it at $z=0$ to get :
\begin{align*}
\p_z I_\e(\gamma_U \cdot + V_U)(0)  = 0.
\end{align*}
As in Section \ref{sec reformulation}, a direct computation shows that $\gamma_U$ and $V_U$ are linked by the following relation:
\begin{equation}\label{eq:J'}
0 = \Je(\gamma_U, V_U), 
\end{equation}
In order to invert this relationship, and deduce that $\gamma_U = \gamma_\e(V_U)$, it is important to prove that $V_\e \in \Eaz$, which amounts to showing that $\p_z^2 V_\e(0) \geq \p_z^2 m(0)$, the other conditions being clearly verified.

Differentiating the problem \eqref{eq:PUeps} twice, and evaluating at $z=0$, we get:
\begin{align*}
\p_z^2 m(0) & = \p_z^2 I_\e(U_\e)(0)  + I_\e(U_\e)(0) \dfrac{ \p_z^2 U_\e(0)}{2}. \nonumber \\
& = I_\e(U_\e)(0) \left( \dfrac{\p_z^2 I_\e(U_\e)(0)}{I_\e(U_\e)(0)} +  \dfrac{ \p_z^2 U_\e(0)}{2} \right).
\end{align*}
Then, using straightforward adaptations of \cref{estim Ieps,unif boundsW}, where $V$ should be replaced with $V_U\in \Ea$ and $\gamma_\e(V)$ should be replaced by $\gamma_U$, we find that  
\begin{align*}
\p_z^2 m(0)  \leq \frac32 \left( \e C_{K_0'} +  \dfrac{ \p_z^2 U_\e(0)}{2} \right).
\end{align*}
for $\e$ sufficiently small.
We deduce that the missing condition is in fact a consequence of the formulation \eqref{eq:PUeps}: 
\begin{align*}
\p_z^2 U_\e(0) \geq \p_z^2 m(0).
\end{align*}
By definition, $\p_z^2 U_\e(0) = \p_z^2 V_\e(0)$, so we have established that $V_\e \in \Eaz$.

Hence, we can legitimately invert \eqref{eq:J'}, so as to find $\gamma_U = \gamma_\e(V_U)$, where the function $\gamma_\e$ is defined in \cref{existunique gammaeps}. Since $U_\e\in K_0'$ by assumption, we have in particular $\nalph{V_U}\leq R_0'$. Of course, $V_U$ is the candidate of being the unique fixed point of $\H$ in $K_0'$ (but also in $K_0$). The proof of this claim follows the lines of \cref{eq:formal equivalence},   checking that all manipulations are justified.


First, we  divide \eqref{eq:PFV'} by $I_U = I_\e(\gamma_U \cdot + V_U) = I_\e(\gamma_\e(V_U) \cdot + V_U) = \I_\e(V_U)$. According to \cref{estim Ieps}, this quantity is uniformly close to $1$ for $\e$  small, so it does not vanish. Taking the logarithm on both sides, we get for all $z \in \R$ :
\begin{equation*}
 V_U(z)-2V_U(\bz)+V_U(0)   = \log \left( \dfrac{I_U(0)+ \m(z)}{I_U(z)} \right) .
\end{equation*}
We differentiate the last equation to end up with the following recursive equation for every $z \in \R$ 
\begin{equation*}
\partial_z V_U (z) - \p_z V_U(\bz) = \p_z \log \left( \dfrac{I_U(0)+ \m}{I_U(z)} \right)(z) .
\end{equation*}
One simply deduces, that for all $z \in \R$, we necessarily have: 
\begin{equation*}
\ds \p_z V_U(z) = \p_z V_U (0)  + \sum_{k \geq 0} \log \left( \dfrac{I_U(0)+ \m(2^{-k}z)}{I_U(2^{-k}z)} \right) .
\end{equation*}
Note that the $\mathcal C^1$ continuity at $z=0$ is used here. Moreover, $\p_z V_U (0) = 0$ by definition of $V_U$.
The analysis performed in \cref{finiteness Heps} guarantees  that this sum is indeed finite. Finally, integrating back the previous identity yields 
\begin{equation*}
\ds  V_U(z)=   \sum_{k\geq 0} 2^k \log \left( \dfrac{I_U(0)+ \m(2^{-k}z)}{I_U(2^{-k}z)} \right) =  \sum_{k\geq 0} 2^k \log \left( \dfrac{\I_\e(V_U)(0)+ \m(2^{-k}z)}{\I_\e(V_U)(2^{-k}z)} \right)  .
\end{equation*}
The last expression is nothing but $\H(V_U)$, by definition \eqref{eq:defH}. Therefore, $V_U = \H(V_U)$ is the unique fixed point of $\H$ in $K_0'$. 

%

\subsection{Convergence of $(\lep, U_\e)$ towards $(\lambda_0,U_0)$ -- \Cref{convergence PUeps}(ii)}\label{sec proof convergence PUeps}
As previously, we decompose $U_\e = \gamma_\e\cdot + V_\e$, where $\gamma_\e$ stands for $\gamma_\e(V_\e)$. Firstly, we have $\lambda_\e = \I_\e(V_\e)(0)\to 1$, using \cref{estim Ieps}. Secondly, using an argument of diagonal extraction, there exists a subsequence $\e_n$, and a limit function $V_0$ such that 
\begin{align}
\label{eq:convegence pzVe}\lim_{\e\to 0} \p_z V_\e=  \p_z V_0 \,,\quad \text{in $L^\infty_\loc$},\\
\label{eq:convegence pz2Ve}\lim_{\e\to 0} \p_z^2 V_\e=  \p_z^2 V_0 \,,\quad \text{in $L^\infty_\loc$}.
\end{align}
We have used the Arzela-Ascoli theorem and the uniform $\mathcal C^3$ bound in order to get the convergence up to the second derivative. However, there is no reason why the convergence should hold for the third derivative, due to the lack of compactness.

Looking at \eqref{eq:PUeps}, we see that $\I_\e(U_\e)$ converges uniformly to 1, and, for every given $z\in \R$, \eqref{eq:convegence pzVe} implies that
\begin{align}\label{eq:vareq_V}
U_\e(z)-2U_\e(\bz) + U_\e(0) = V_\e(z)-2V_\e(\bz) + V_\e(0)   \xrightarrow[\e \to 0]{} V_0(z)-2V_0(\bz) + V_0(0).
\end{align}

Passing to the pointwise limit in problem \eqref{eq:PUeps}, we get that $V_0$ solves the following problem:
\begin{equation*}
1 + m(z) = \exp \left( V_0(z)-2 V_0(\bz) + V_0(0) \right). 
\end{equation*}
Then,  we have necessarily:
\begin{equation}\label{eq:V0defsec6}
V_0(z) =   \ds \sum_{k\geq 0} 2^k \log \left( 1 + m( 2^{-k}z ) \right).
\end{equation}
This completes the proof of \Cref{convergence PUeps}(ii), up to the identification of the limit of $\gamma_\e$, if it exists. In our approach, this goes through the characterization of the functional $\Je$ \eqref{eq:def_Jeps}. This was indeed the purpose of \cref{estimate Jeps}. Here comes an important difficulty, as compactness estimates are not sufficient to pass to the limit in $\Je(0,V_\e)$ as $\e\to0$ \eqref{eq:estim_Jeps}, as it would formally involve the pointwise value $\p_z^3V_0(0)$ which is beyond what our compactness estimates can provide. Note that passing to the limit in $\p_g\Je(g,V_\e)$ as $\e\to0$  is not an issue, as it can be encompassed by \eqref{eq:convegence pz2Ve}, see    \eqref{eq:estim_Jeps'}.   


It remains to prove that the following limit holds true
\begin{equation}\label{eq:limV3}
\lim_{\e \to 0}\dfrac{1}{4\sqrt{2}\pi} \iint _{\R^2} \exp(-Q (y_1,y_2) ) \left[ y_1^2 \p_z^3 V(\e \tilde{y_1}) +   y_2^2 \p_z^3 V(\e \tilde{y_2}))  \right] dy_1dy_2 = \dfrac12 \p_z^3m(0)\, .
\end{equation}
Indeed, this would directly imply that 
\begin{equation}
\lim_{\e\to 0} \Je(g,V_\e) = - \dfrac{1}2\p_z^3m(0)  + g \p_z^2m(0)  \, , 
\end{equation}
as $\p_z^2 V_0(0) = 2\p_z^2 m(0)$ as a consequence of \eqref{eq:V0defsec6}. We could deduce immediately that the root $\gamma_\e(V_\e)$ converges to the expected value \eqref{eq:gamma0value}. 

In the absence of compactness, we call the contraction argument, in order to prove the following key result:

\begin{lem}\label{lem:last contraction}
For every $\delta >0$, there exists $R_1(\delta)>0$, such that, for every $R\geq R_1(\delta)$, there exists $\e_1(\delta,R)$ such that for all $\e \leq \e_1(\delta,R)$, we have:
\begin{align}\label{eq:convgamma_obj}
   \sup_{ \abs{z} \leq \e R} \abs{ \p_z^3 V_\e(z) - \frac43 \p_z^3 m(z) }   \leq \delta \,.
\end{align}
\end{lem}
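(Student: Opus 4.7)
The plan is to exploit the self-similar recursion inherited from the fixed-point equation $V_\e = \H(V_\e)$. Differentiating three times the identity $V_\e(z) = \Gamma_\e^{V_\e}(z) + 2 V_\e(z/2)$, where $\Gamma_\e^{V_\e}(z) = \log\bigl((\I_\e(V_\e)(0) + m(z))/\I_\e(V_\e)(z)\bigr)$, yields the recursion
\[
\p_z^3 V_\e(z) = \p_z^3 \Gamma_\e^{V_\e}(z) + \tfrac14\, \p_z^3 V_\e(z/2).
\]
For $|z| \leq \e R$, the fact that $m(0)=\p_z m(0)=0$ together with $\I_\e(V_\e)(0) = 1 + O(\e)$ from \cref{estim Ieps} gives
\[
\p_z^3 \log(\I_\e(V_\e)(0) + m)(z) = \p_z^3 m(z) + O(\e R) + O(\e),
\]
and the decomposition \eqref{eq:pz3 gammaI} combined with \cref{unif boundsW} yields $\p_z^3 \log \I_\e(V_\e)(z) = W_\e^{(3)}(V_\e)(z) + O(\e^2)$, since $W_\e^{(1)}$ and $W_\e^{(2)}$ are of order $\e$.

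The heart of the argument is a cancellation hidden in $W_\e^{(3)}$: the finite difference operator $\D$ annihilates constants, so for any $c\in \R$,
\[
\tfrac14 \D(\p_z^3 V_\e)(y_1,y_2,z) = \tfrac14\bigl(\p_z^3 V_\e(\bar z) - c\bigr) - \tfrac18 \bigl(\p_z^3 V_\e(\bar z + \e y_1) - c\bigr) - \tfrac18\bigl(\p_z^3 V_\e(\bar z + \e y_2) - c\bigr).
\]
I will pick $c = \tfrac43 \p_z^3 m(0)$, matching the target value (consistent with $\p_z^3 V_0(0) = \sum_{k\geq 0} 4^{-k} \p_z^3 m(0) = \tfrac43 \p_z^3 m(0)$), and set $M := \sup_{|z| \leq \e R} |\p_z^3 V_\e(z) - \tfrac43 \p_z^3 m(0)|$. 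The cubic and mixed contributions $(\D(\p_z V_\e))^3$ and $\D(\p_z V_\e)\D(\p_z^2 V_\e)$ inside $W_\e^{(3)}$ are $O(\e^2)$ by \cref{estim derivatives}.

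To bound $W_\e^{(3)}(V_\e)(z)$ for $|z|\leq \e R$, I split the integration against $d\QV(z)$ into a \emph{good} region $\{|y_1|,|y_2| \leq R/2\}$, where every evaluation point $\bar z + \e y_i$ remains in $[-\e R, \e R]$ so that each bracket above is controlled by $M$, and a \emph{tail} region, handled by the Gaussian decay \eqref{eq:control dQV} of $d\QV$ together with the uniform bound $\|\p_z^3 V_\e\|_\infty \leq \|V_\e\|_\alpha \leq R_0$. This produces the sharper estimate
\[
|W_\e^{(3)}(V_\e)(z)| \leq \tfrac12 M + C\, e^{-c R^2} + O(\e),\qquad |z|\leq \e R.
\]
Inserting this into the recursion and taking the supremum over $|z|\leq \e R$ (which also covers $|z/2| \leq \e R$) yields
\[
M \leq \omega_{\p_z^3 m}(\e R) + \tfrac14 M + \tfrac12 M + C\, e^{-cR^2} + O(\e),
\]
where $\omega_{\p_z^3 m}$ denotes the modulus of continuity of $\p_z^3 m$. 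Absorbing the $M$ terms on the left gives $M \leq 4\bigl[\omega_{\p_z^3 m}(\e R) + C e^{-cR^2} + O(\e)\bigr]$. I will then choose $R_1(\delta)$ large enough that $4 C e^{-cR^2} \leq \delta/3$ for $R \geq R_1$, and subsequently $\e_1(\delta,R)$ small enough that $4\omega_{\p_z^3 m}(\e R)$ and the $O(\e)$ remainder are each $\leq \delta/3$. This leaves $M \leq 2\delta/3$, and the continuity of $\p_z^3 m$ on $[-\e R, \e R]$ converts the target $\tfrac43 \p_z^3 m(0)$ into $\tfrac43 \p_z^3 m(z)$ at the cost of a further $\delta/3$, proving \eqref{eq:convgamma_obj}.

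The main obstacle is extracting the sharper contraction factor $1/2$ in the estimate of $W_\e^{(3)}$: the a priori bound from \cref{unif boundsW} only supplies the factor $\kappa(\alpha)$, which combined with the $1/4$ of the recursion would fail to close the inequality (since $\kappa(\alpha) + 1/4 > 1$). The improvement crucially relies on the cancellation $\D(c) \equiv 0$ combined with the self-consistent assumption that $\p_z^3 V_\e$ is nearly constant, equal to $\tfrac43 \p_z^3 m(0)$, throughout a neighborhood containing every evaluation point $\bar z + \e y_i$ appearing in $\D$. This bootstrap is controlled by two parameters working against each other: $R$ must be large enough to kill the Gaussian tail, while $\e$ must be small enough that the constancy window $[-\e R, \e R]$ still contains those evaluation points.
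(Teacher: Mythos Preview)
Your argument is correct and follows the same route as the paper's proof: derive the recursion $\p_z^3 V_\e(z)=\p_z^3\Gamma_\e(z)+\tfrac14\p_z^3 V_\e(z/2)$, split the integral defining $W_\e^{(3)}$ into a core region $\{|y_i|\le R/2\}$ and a Gaussian tail, exploit $\D(c)\equiv 0$ to replace $\p_z^3 V_\e$ by $\p_z^3 V_\e-\tfrac43\p_z^3 m$ inside the core, and close a contraction inequality with combined factor $\tfrac14+\tfrac12<1$. The only cosmetic difference is that the paper subtracts the \emph{variable} quantity $\tfrac43\p_z^3 m(\cdot)$ at each evaluation point (producing their $A$ and $B$ terms), whereas you subtract the \emph{constant} $\tfrac43\p_z^3 m(0)$ and absorb the discrepancy into $\omega_{\p_z^3 m}(\e R)$; both choices lead to the same conclusion. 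One small bookkeeping slip: with three terms each bounded by $\delta/3$ your inequality gives $M\le\delta$, not $M\le 2\delta/3$; since you still need room for the final conversion $\tfrac43\p_z^3 m(0)\to\tfrac43\p_z^3 m(z)$, just replace $\delta/3$ by $\delta/5$ (or similar) throughout.
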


%
%
\begin{proof}
To begin with, we differentiate the problem \eqref{eq:PUeps} three times:
\begin{align*}
\p_z^3 V_\e(z) - \dfrac{1}{4}\p_z^3 V_\e(\bz) = \p_z^3 \log\left (\lambda_\e + m(z)\right ) - \p_z^3 \log\left ( \I_\e(U_\e)(z) \right ).
\end{align*}
We expand the right hand side as usual:
\begin{multline*}
  \p_z^3 V_\e(z) - \dfrac{1}{4} \p_z^3 V_\e(\bz) =  \dfrac{\p^3_z m(z)}{\lambda_\e +m(z)}+\dfrac{3 \p^2_z m(z)\p_z m(z)}{(\lambda_\e +m(z))^2}
+\dfrac{2 \p_z m(z	)^3}{(\lambda_\e +m(z))^3} \\ - W_\e^{(3)}(z)- 3W_\e^{(2)}(z)W_\e^{(1)}(z)-2W_\e^{(1)}(z)^3 .
\end{multline*}
We subtract $\p_z^3 m(z)$ on each side, and we reorganize the terms in order  to conjure the difference $\p_z^3 V_\e(z) - (4/3) \p_z^3 m(z)$ we are interested in:
\begin{multline}\label{eq:convgamma_1} 
\p_z^3 V_\e(z) - \frac43 \p_z^3 m(z) - \dfrac{1}{4}\left( \p_z^3 V_\e(\bz)-\frac43 \p_z^3 m(\bz)  \right) + \dfrac{1}{3} \left(   \p_z^3 m(z) - \p_z^3 m(\bz)\right) = \\
 \p^3_z m(z) \left( \dfrac{1}{\lambda_\e +m(z)}-1 \right) + \dfrac{3 \p^2_z m(z)\p_z m(z)}{(\lambda_\e +m(z))^2}
+\dfrac{2 \p_z m(z	)^3}{(\lambda_\e +m(z))^3}\\  - W_\e^{(3)}(z)- 3W_\e^{(2)}(z)W_\e^{(1)}(z)-2W_\e^{(1)}(z)^3.
\end{multline}
We  estimate below each term of \eqref{eq:convgamma_1}.
First, the terms involving $m$ and its derivatives on the right hand side of \eqref{eq:convgamma_1}   converge to zero, uniformly for $|z|\leq \e R$, as $\e\to 0$, simply because  $m(0) = \p_z m(0)= 0$,  and $\lambda_\e\to 1$.
Actually, the same holds true for the difference of $ \p_z^3 m(z) - \p_z^3 m(\bz)$ by   continuity of   $\p_z^3m$ at the origin. 

Second,  from \cref{unif boundsW}, we know that 
\begin{equation}
\max\left (\left  \| W_\e^{(1)} \right \|_{\infty}, \left  \| W_\e^{(2)} \right \|_{\infty} \right )=  O(\e).
\end{equation}
%
The remaining term, $W_\e^{(3)}(z)$ is more delicate to handle. In fact, it will result in a contraction estimate, exactly as in \cref{sec properties H}. We recall the expression of $W_\e^{(3)} $ \eqref{eq:W3V}:
\begin{equation*}
W_\e^{(3)}(z) =    
\left\langle \dQVe(z),  \dfrac{1}{4} \D(\p_z^3 V_\e)(z)  + \left( \D(\p_z V_\e)(z)\right)^3  
+  \frac32 \D(\p_z V_\e)(z)  \D(\p_z^2 V_\e)(z)  \right\rangle.
\end{equation*}
As in the proof of \cref{estim Delta 1,estim Delta 2,estim Delta 3}, we get that the last two contributions involving the non-linear and lower order terms $\left( \D(\p_z V_\e) \right)^3$ and  
$ \D(\p_z V_\e)   \D(\p_z^2 V_\e)$ are  $ O(\e)$. 
It remains the  term $\left\langle \dQVe ,  (1/4) \D(\p_z^3 V_\e) \right \rangle$, which is a double integral in variables $(y_1,y_2)$ that we split in two regions of integration:  $\Omega = \left\{ \abs{y_1} \leq R/2, \text{ and } \abs{y_2} \leq R/2 \right\}$ and $  \comega= \left\{ \abs{y_1} > R/2, \text{ or } \abs{y_2} > R/2 \right\}$. 

Let $\delta>0$. We can choose $R_1(\delta)$ large enough so that, for all $R\geq R_1(\delta)$, we have  
\begin{align}
\frac14 \left |\left\langle \dQVe(z)\mathbf{1}_{\comega}(y_1,y_2),   \D(\p_z^3 V_\e)(z) \right\rangle \right | 
\leq \frac12 \left\langle \dQVe(z)\mathbf{1}_{\comega}(y_1,y_2), 1 \right\rangle \nalph{K_0}  \leq \dfrac{\delta}{10}.
\end{align}
In the region where $y_1$ and $y_2$ are both below $R/2$,  
we introduce the difference with $\p_z^3m$, as in \eqref{eq:convgamma_obj}:
\begin{equation*}
\left\langle \dQVe(z) \mathbf{1}_{\Omega},  \dfrac{1}{4} \p_z^3 V_\e(\bz) -\dfrac{1}{8} \p^3_z V_\e(\bz+\e y_1)-\dfrac{1}{8} \p^3_z V_\e(\bz+\e y_2) \right\rangle  = A + B,
\end{equation*} 
where 
\begin{multline*}
A = \left\langle \dQVe(z)\mathbf{1}_{\Omega},  \dfrac{1}{4} \left( \p_z^3 V_\e(\bz)-\frac43 \p_z^3  m(\bz) \right)   \right.
\\
\left. -\dfrac{1}{8} \left( \p^3_z V_\e(\bz+\e y_1)-\frac43 \p_z^3 m(\bz + \e y_1 ) \right)-\dfrac{1}{8} \left(\p^3_z V_\e(\bz+\e y_2) -\frac43 \p_z^3 m(\bz +\e y_2)\right) \right\rangle  
\end{multline*}
and
\begin{equation*}
 B = \frac16 \left\langle \dQVe(z)\mathbf{1}_{\Omega},    \left ( \p^3_z m(\bz) - \p_z^3 m(\bz + \e y_1 )\right ) +\left ( \p^3_z m(\bz)  - \p_z^3 m(\bz +\e y_2) \right )\right\rangle.
\end{equation*}
By construction, we have $|\bz + \e y_i| \leq \e R/2 + \e R/2 \leq \e R$. Therefore, we have
\begin{align*}
\abs{A} & \leq \left\langle \dQVe(z) \mathbf{1}_{\Omega}, \dfrac{1}{4} \underset{ \abs{z} \leq \e R  }  {\sup} \abs{ \p_z^3 V_\e(z)- \frac43 \p_z^3 m(z)} + \dfrac{2}{8} \sup_{\abs{z} \leq \e R} \abs{ \p_z^3 V_\e(z)- \frac43 \p_z^3 m(z)} \right\rangle \\
& \leq \dfrac{1}{2}\sup_{\abs{z} \leq \e R} \abs{ \p_z^3 V_\e(z)- \frac43 \p_z^3 m(z)}.
\end{align*}
As for $B$ we find:
\begin{equation*}
\abs{B}  \leq \dfrac13 \left\langle \dQVe(z) \mathbf{1}_{\Omega} ,   \underset{\abs{z} \leq \e R} { \text{ osc }}(\p_z^3 m)  \right\rangle  \leq  \dfrac13 \underset{\abs{z} \leq \e R} { \text{ osc }}\left (\p_z^3 m\right ) \xrightarrow[\e \to 0]{} 0 . 
\end{equation*}
Going back to \eqref{eq:convgamma_1}, we have shown that for $R\geq R_1$, there exists $\eps_1>0$ small enough such that for all $\e \leq \e_1$ we have:
\begin{align*}
\ds \sup_{\abs{z} \leq \e R}   \abs{ \p_z^3 V_\e(z)- \frac43 \p_z^3 m(z) } 
& \leq \ds \frac\delta4   + \dfrac{1}{4}  \sup_{\abs{z} \leq \e R} \abs{ \p_z^3 V_\e(\bz)- \frac43 \p_z^3 m(\bz)} +    
 \dfrac{1}{2}\sup_{\abs{z} \leq \e R} \abs{ \p_z^3 V_\e(z)- \frac43 \p_z^3 m(z)}   \\
& \leq  \frac\delta4 +   \dfrac{3}{4}  \sup_{\abs{z} \leq \e R} \abs{ \p_z^3 V_\e(z)- \frac43 \p_z^3 m(z)} .
\end{align*}
As a consequence, we find that 
\begin{align*}
\sup_{\abs{z} \leq \e R}   \abs{ \p_z^3 V_\e(z)- \frac43 \p_z^3 m(z) } \leq    \delta .
\end{align*}
This completes the proof of \cref{lem:last contraction}.
\end{proof}

Back to \eqref{eq:limV3}, we recall that $\abs{\tilde{y_i}} \leq \abs{ y_i}+1$, as a by-product of Taylor expansions. Let $\delta>0$, and take $R$ sufficiently large such that 
\begin{equation}
\dfrac{1}{4\sqrt{2}\pi}\iint _{\comega}  \exp(-Q (y_1,y_2) ) \left (  \nalph{K_0} + \dfrac12 \p_z^3m(0)\right ) \left[ y_1^2  +   y_2^2   \right] dy_1dy_2  
\leq  \frac\delta{10}\,,
\end{equation}
where $\Omega = \left\{ \abs{y_1} \leq R - 1, \text{ and } \abs{y_2} \leq  R - 1 \right\}$. The other part of the double integral is:
\begin{equation}
\dfrac{1}{4\sqrt{2}\pi}\iint _{\Omega} \exp(-Q (y_1,y_2) ) \left[ y_1^2 \p_z^3 V(\e \tilde{y_1}) +   y_2^2 \p_z^3 V(\e \tilde{y_2}))  \right] dy_1dy_2 .
\end{equation}
Using \cref{lem:last contraction} and the continuity of $\p_z^3m$ at $z = 0$, we can find $\e_1>0$ such that for all $\e\leq \e_1$, 
\begin{multline*}
\abs{ \dfrac{1}{4\sqrt{2}\pi}\iint _{\Omega} \exp(-Q (y_1,y_2) ) \left[ y_1^2 \left ( \p_z^3 V(\e \tilde{y_1})- \dfrac43 \p_z^3m(0)\right ) +   y_2^2 \left ( \p_z^3 V(\e \tilde{y_2})- \dfrac43 \p_z^3m(0)\right )  \right] dy_1dy_2    } \\
\leq \left ( \dfrac{1}{4\sqrt{2}\pi}\iint _{\Omega} \exp(-Q (y_1,y_2) ) \left[ y_1^2  +   y_2^2   \right] dy_1dy_2\right ) \dfrac{\delta}{10}. 
\end{multline*}
Putting all the pieces together, and using that $\frac1{\sqrt{2}\pi}   \iint_{\R^2} \exp(-Q (y_1,y_2) ) \left[ y_1^2  +   y_2^2   \right] dy_1dy_2 = \frac32$ \eqref{eq:covariance}, we deduce that:
\begin{equation}
\abs{\dfrac{1}{4\sqrt{2}\pi} \iint _{\R^2} \exp(-Q (y_1,y_2) ) \left[ y_1^2 \p_z^3 V(\e \tilde{y_1}) +   y_2^2 \p_z^3 V(\e \tilde{y_2}))  \right] dy_1dy_2 - \dfrac12 \p_z^3m(0)} \leq \delta \, .
\end{equation}
Hence,  the limit announced in \eqref{eq:limV3} holds true. This completes the proof of the asymptotic behavior $(\lep, U_\e)\to (\lambda_0,U_0)$ as described in \Cref{convergence PUeps}(ii).

\section{Extension to higher dimensions}\label{sec large dim}

Our methodology can be extended to higher dimension, without too much effort. 
This section is devoted to the  generalization of  the elements of proof that were specific to the one-dimensional case. 

All the estimates  on the operator $\H$ and its constitutive pieces are still operational in   higher dimension.  The only part of our proof that requires some specific attention is the construction of the linear part $\gamma_\epsilon(V_\e)$ which was performed in~\Cref{sec existunique gammaeps}.  Indeed, we used a monotonicity argument to show that $\gamma_\epsilon(V_\e)$ can be defined in a unique way. 

We proceed as in~\Cref{sec existunique gammaeps}.
First we show formally how to obtain the expression of the vector $\gamma_0$ \eqref{eq:gamma0value} via suitable Taylor expansions. Then, we justify these Taylor expansions, and we exhibit a monotonic function that enables to conclude, exactly as in dimension 1.

\subsection{The formal expression of the linear part $\gamma_0$}
Following the very same heuristics as in \cref{sec heuristicsgammaeps}, but being careful during the Taylor expansions, we formally end up with the following matrix valued  identity:
\begin{multline}\label{eq:impgamma_multid}
D^2 V(0)   \left( \dfrac{1}{(\sqrt{2}\pi)^d} \ds \iint_{\R^{2d}} e^{ -Q(y_1,y_2)} ( y_1 \otimes y_1  + y_1 \otimes y_2 ) dy_1 dy_2  \right)  \gamma_0  \\
=\dfrac{1}{2} D^3 V(0)   \left ( \dfrac{1}{(\sqrt{2}\pi)^d}  \iint_{\R^{2d}} e^{  -Q(y_1,y_2) } y_1 \otimes y_1 dy_1 dy_2 \right ).
\end{multline}
The quadratic form $Q$ yields the multivariate centered gaussian distribution associated with the following covariance matrix $\Sigma \in \mathcal M_{2d}(\R) $:
\begin{align*}
\Sigma = \dfrac{1}{4} \begin{pmatrix}
3 \Id & - \Id  \\
- \Id & 3 \Id
\end{pmatrix}.
\end{align*}
The Kronecker product $y_1 \otimes y_1 $ yields a  matrix of moments, and so 
the relation \eqref{eq:impgamma_multid} can be simplified, similarly to the one dimensional case, so as to obtain: 
\begin{align*}
\left ( D^2V(0)  \left(\dfrac{3}{4}-\dfrac{1}{4}\right) \Id\right ) \gamma_0  &= \frac12 D^3V(0) \dfrac{3}{4} \Id, \\
\frac12 D^2V(0)\gamma_0 & = \frac38 D^3V(0)  \Id . 
\end{align*}
The righ hand side is a tensor applied to a matrix yields a vector that can be simplified even further using tensorial properties:
$D^3 V(0)  \Id =  D ( \Delta V)(0)$.
Then, provided that $D^2V(0)$ is non degenerate,  we obtain the limited expected value of $\gamma_0$ in dimension higher than $1$, that is a generalization of \eqref{eq:gammaepsd=1}:
\begin{align*}
\gamma_0(V) = \dfrac{3}{4} \left ( D^2 V(0)\right )^{-1}  D ( \Delta V)(0).
\end{align*}
In the case where $V_0$ is given by \eqref{eq:gamma0value} through the fixed point procedure, we obtain
\begin{equation}
\gamma_0(V_0) = \dfrac{1}{2} \left ( D^2 m(0)\right )^{-1}  D ( \Delta m)(0).
\end{equation}

\subsection{Extension of the proof of~\cref{existunique gammaeps} (\cref{sec:proof of prop24})}
We now fix $V \in K$, where $K$ is a  ball of $\Eaz$. The purpose is to prove that there is a unique solution in $\R^d$ of the following problem:
\begin{align}\label{eq:Je_0}
\Je(\gamma,V) = 0.
\end{align}
We insist upon the fact that the variable $\gamma$ belongs to $\R^d$ and the function $\Je(\cdot,V)$ is now defined as a vector field on $\R^d$, $\Je : \R^d \times \Ea \to \R^d$.

As in \cref{sec:proof of prop24}, we can obtain the following estimate
\begin{equation}\label{eq:prep brouwer}
\Je(g,V) = \Je(0,V) + \frac12 D^2  V(0)  g + O(\e)\, , 
\end{equation}
by means of refined Taylor expansions, where $\Je(0,V)$ is bounded {\em a priori}, independently upon $\e>0$ for $V\in K$. To prove the existence of a root $\gamma_\e$, we used the mean value theorem in the proof of~\cref{existunique gammaeps}. The analogous statement in higher dimension is the Brouwer fixed point theorem. Indeed, \eqref{eq:Je_0} can be recast as follows:
\begin{align*}
g = \left ( \Id +  \dfrac{1}{2} D^2 V(0) \right )^{-1}\left (  g -  \Je(0,V) + O(\e)\right ) = \mathcal{T}_\e(g) .
\end{align*}
Thus, we are led to finding a fixed point of a continuous function. As in the one-dimensional case, thanks to the lower bounded $D^2V(0) \geq \mu_0 \Id$ encoded in the definition of $\Eaz$ \eqref{eq:E0}, we can show easily that there exists $R_K$ such that the ball of radius $R_K$ in $\R^d$ is left invariant by $\mathcal{T}$. 
Brouwer's fixed point theorem guarantees that there  exists a fixed point $\gamma_\e$ to $\mathcal{T}$, which is also a root of \eqref{eq:Je_0}. 

For the uniqueness part, we can use strict monotonicity, similarly as in the one dimensional case. This is possible, thanks to \eqref{eq:estim_Jeps'} :
\begin{align}\label{eq:pJe_approx}
D_g \Je(g,V)& =\frac12  D^2 V(0)  + O(\e).
\end{align}
We deduce from this strong estimate that the vector field $\Je(\cdot,V)$ is locally uniformly monotonic, in the sense that there exists $\mu_K$ such that the following inequality holds true for  all  $\e$ sufficient small, and every  $g_1,g_2\in B(0,R_K)$: 
\begin{align}\label{eq:cond posvectorfield}
\left(\Je(g_1,V)-\Je(g_2,V)\right) \cdot( g_1-g_2 )\geq \frac12 \mu_K \norm{g_1-g_2}^2.
\end{align}
This monotonicity condition is clearly satisfied, as it is equivalent to the following first order condition,
\begin{align}\label{eq:jacob pos}
\dfrac{1}{2} \left( D_g \Je(g,V) + D_g \Je(g,V)^\top \right) \geq \mu_K Id,
\end{align}
It is immediate that any strictly monotonic vector field admits at most one root. This completes the proof of uniqueness of $\gamma_\e(V)$.

\bibliographystyle{apalike}
\bibliography{References2}
\bigskip
\bigskip
\bigskip

\end{document}